\let\oldsection\section % Equations
\renewcommand{\section}{
  \renewcommand{\theequation}{\thesection.\arabic{equation}}
  \oldsection}
\newcommand{\mexp}[1]{\ensuremath{\exp(-2\pi \mathrm{i}\,\sprod{ #1 })}}
\newcommand{\mcos}[1]{\ensuremath{\cos(2 \pi #1 )}}
\newcommand{\gencos}[1]{\ensuremath{\mathfrak{c}_{#1}}}
\newcommand{\N}{\mathbb{N}}
\newcommand\Z{\mathbb{Z}}
\newcommand\C{\mathbb{C}}
\newcommand\R{\mathbb{R}}
\newcommand{\RX}{\ensuremath{\R[z]}}
\newcommand{\CX}{\ensuremath{\C[z]}}
\newcommand{\nops}[1]{\ensuremath{\vert #1  \vert}}
\newcommand{\norm}[1]{\left\lVert#1\right\rVert} %norm
\newcommand{\rank}{\ensuremath{\mathrm{Rank}}} %Rank
\newcommand{\trace}{\ensuremath{\mathrm{Trace}}} %Trace
\renewcommand{\det}{\ensuremath{\mathrm{Det}}} %Determinant
\newcommand{\tbox}[1]{\ensuremath{\quad \mbox{#1} \quad}} % spaced text in mathmode
\DeclareMathOperator{\diag}{diag}
\newcommand{\conj}[1]{\ensuremath{\widehat{#1}}}
\newcommand{\RootA}[1][n-1]{\ensuremath{\mathrm{A}_{#1}}}
\newcommand{\RootB}[1][n]{\ensuremath{\mathrm{B}_{#1}}}
\newcommand{\RootC}[1][n]{\ensuremath{\mathrm{C}_{#1}}}
\newcommand{\RootD}[1][n]{\ensuremath{\mathrm{D}_{#1}}}
\newcommand{\RootE}[1][n]{\ensuremath{\mathrm{E}_{#1}}}
\newcommand{\RootF}[1][n]{\ensuremath{\mathrm{F}_{#1}}}
\newcommand{\RootG}[1][n]{\ensuremath{\mathrm{G}_{#1}}}
\newcommand{\weyl}{\ensuremath{\mathcal{W}}} %Weyl group
\newcommand{\PC}{\ensuremath{\Lambda\!\!\Lambda}}
\newcommand{\roots}{\ensuremath{\rho}}
\newcommand{\highestroot}{\roots_{0}}
\newcommand{\Roots}{\ensuremath{\mathrm{R}}}
\newcommand{\Base}{\ensuremath{\mathrm{B}}}
\newcommand{\Corootlattice}{\ensuremath{\Lambda}}
\newcommand{\sprod}[1]{\ensuremath{\langle #1 \rangle}} % scalar product
\newcommand{\fweight}[1]{\ensuremath{\omega_{#1}}}	%fundamental weights
\newcommand{\weight}{\ensuremath{\mu}} %any other weight
\newcommand{\Weights}{\Omega} %weightlattice
\newcommand{\Cheblvl}[1]{\ensuremath{\deg_W(#1)}} %Chebyshev Level
\newcommand{\TT}{\ensuremath{\conj{T}}} %Chebyshev real
\newcommand{\filt}[1]{\ensuremath{\mathcal{F}_{#1}}} %filtration
\newcommand{\Vor}{\ensuremath{\mathrm{Vor}}}
\newcommand{\fundom}{\triangle} %Fundamental Domain
\newcommand{\Image}{\ensuremath{\mathcal{T}}} %image of Hilbert map
\newcommand{\posmat}{\ensuremath{\mathbf{P}}} %positive matrix, eg the Hermite matrix
\def\row#1/#2!{#1_{\IfStrEq{#2}{}{n-1}{#2}} & \dynkin{#1}{#2}\\}
\newcommand{\momm}{\ensuremath{\mathbf{H}}} %Moment Matrix
\newcommand{\measure}{\ensuremath{\mathcal{B}}} %Borel measure
\newcommand{\functional}{\ensuremath{\mathscr{L}}} %linear form/functional
\newcommand{\primalprob}{\ensuremath{(\mathrm{P}_d)}} % primal problem
\newcommand{\dualprob}{\ensuremath{(\mathrm{D}_d)}} % dual problem
\newtheorem{lemma}{Lemma}[section]
\newtheorem{example}[lemma]{Example}
\newtheorem{definition}[lemma]{Definition}
\newtheorem{proposition}[lemma]{Proposition}
\newtheorem{remark}[lemma]{Remark}
\newtheorem{theorem}[lemma]{Theorem}
\newtheorem{corollary}[lemma]{Corollary}
\newtheorem{convention}[lemma]{Convention}
\newcommand{\italgf}{\slshape  }
\title{
	{O}ptimization of trigonometric polynomials\\
	with crystallographic symmetry and\\
	spectral bounds for set avoiding graphs
}
\author{	
	Evelyne Hubert\thanks{Inria d'Universit\'{e} C\^{o}te d'Azur} , 
	Tobias Metzlaff\footnotemark[1] \thanks{RPTU Kaiserslautern--Landau} , 
	Philippe Moustrou\thanks{Universit\'{e} Toulouse Jean Jaures} , 
	Cordian Riener\thanks{UiT The Arctic University}
}
\newcommand{\Addresses}{{
\bigskip
\footnotesize

	E.~Hubert, \textsc{Centre Inria d'Universit\'{e} C\^{o}te d'Azur, 06902 Sophia Antipolis, France}\par\nopagebreak
  	\textit{E-mail address}: \texttt{evelyne.hubert@inria.fr}\par\nopagebreak
  	\textit{ORCID}: \texttt{0000-0003-1456-9524}
  	
\medskip
  	
  	T.~Metzlaff, \textsc{Department of Mathematics, RPTU Kaiserslautern--Landau, 67663 Kaiserslautern, Germany}\par\nopagebreak
  	\textit{Former Association}: \textsc{Centre Inria d'Universit\'{e} C\^{o}te d'Azur, 06902 Sophia Antipolis, France}\par\nopagebreak
  	\textit{E-mail address}: \texttt{tobias.metzlaff@rptu.de}\par\nopagebreak
  	\textit{ORCID}: \texttt{0000-0002-0688-7074}

\medskip

  	P.~Moustrou, \textsc{Institut de Math\'{e}matiques de Toulouse, Universit\'{e} Toulouse Jean Jaures, 31100 Toulouse, France}\par\nopagebreak
  	\textit{E-mail address}: \texttt{philippe.moustrou@math.univ-toulouse.fr}\par\nopagebreak
  	\textit{ORCID}: \texttt{0000-0003-3432-4954}

\medskip

  	C.~Riener, \textsc{Department of Mathematics, UiT The Arctic University, 9037 Troms\o, Norway}\par\nopagebreak
  	\textit{E-mail address}: \texttt{cordian.riener@uit.no}\par\nopagebreak
  	\textit{ORCID}: \texttt{0000-0002-1192-3500}
  
}}
\renewcommand{\headsep}{8mm}
\renewcommand{\footskip}{15mm}
\begin{document}

\maketitle
\thispagestyle{empty}

\begin{abstract}
Trigonometric polynomials are usually defined on the lattice of integers.
We consider the larger class of weight and root lattices with crystallographic symmetry.
This article gives a new approach to minimize trigonometric polynomials, which are invariant under the associated reflection group.
The invariance assumption allows us to rewrite the objective function in terms of generalized Chebyshev polynomials. 
The new objective function is defined on a compact basic semi--algebraic set, so that we can benefit from the rich theory of polynomial optimization.

We present an algorithm to compute the minimum: 
Based on the Hol--Scherer Positivstellensatz, we impose matrix--sums of squares conditions on the objective function in the Chebyshev basis.
The degree of the sums of squares is weighted, defined by the root system. 
Increasing the degree yields a converging Lasserre--type hierarchy of lower bounds.
This builds a bridge between trigonometric and polynomial optimization, allowing us to compare with existing techniques.

The chromatic number of a set avoiding graph in the Euclidean space is defined through an optimal coloring.
It can be computed via a spectral bound by minimizing a trigonometric polynomial. 
If the to be avoided set has crystallographic symmetry, our method has a natural application.
Specifically, we compute spectral bounds for the first time for boundaries of symmetric polytopes.
For several cases, the problem has such a simplified form that we can give analytical proofs for sharp spectral bounds.
In other cases, we certify the sharpness numerically.\\
~\\
\textbf{Keywords}: Trigonometric Optimization, Crystallographic Symmetry, Weyl Groups, Root Systems, Lattices, Chebyshev Polynomials, Chromatic Numbers, Set Avoiding Graphs, Spectral Bounds\\
~\\
\textbf{MSC}: 05C15  17B22  33C52  52C07  90C23
\clearpage
\end{abstract}

\tableofcontents

~\\~

\Addresses

\clearpage

\section{Introduction}
\label{section_introduction}
\setcounter{equation}{0}

%% The paper in a nutshell
Given a $n$--dimensional lattice $\Weights \subseteq \R^n$,
a trigonometric polynomial is a function
\[
	f: \R^n \to \R ,\, u\mapsto f(u) := \sum\limits_{\weight\in \Weights} c_\weight \, \mexp{\weight,u},
\]
where 
$\sprod{\cdot,\cdot}$ denotes the Euclidean scalar product 
and the finitely many nonzero coefficients $c_\weight\in \C$ satisfy $ c_{-\weight} =\overline{c_{\weight}}$. 
Such functions are good $L^2$--aproximations for 
$\Lambda$--periodic functions, where $\Lambda$ is the dual lattice. 
This paper offers a new approach to optimizing such a 
trigometric function, over $\R^n$, when this latter is invariant under a crystalographic reflection group. 
We show how the problem can then be reduced to polynomial optimization on a semi--algebraic set 
and handled with a variation on 
Lasserre hierarchy. The resulting algorithm is applied to the exploration of the spectral bound on the chromatic numbers of set avoiding graphs.

%% What's on the market
In the literature of trigonometric optimization, one often regards the lattice simply as a free $\Z$--module, that is, $\Weights=\Z^n$, ignoring the geometry and only taking central symmetry into account. 
For the purpose of optimization, a hierarchy of Hermitian sums of squares reinforcements provides a numerical solution \cite{dumitrescu07,bach22}.
Alternatively, one can apply Lasserre's hierarchy with complex variables \cite{josz18}, where one has to restrict to the compact torus.

%% What we consider and why its  important
In this article, $\Weights$ is the weight lattice of a crystallographic root system in $\R^n$. 
Root  and weight lattices  provide optimal configurations for a variety of problems in  geometry and information theory, with incidence in  physics and chemistry. 
The $\RootA[2]$ root lattice (the hexagonal lattice) is classically known to be optimal for sampling, packing, covering, and quantization in the plane \cite{conway1988a,kunsch05}, 
but also proved, or conjectured, to be optimal for energy minimization problems \cite{Petrache20,faulhuber23}.  
More recently, the $\RootE[8]$ lattice was  proven to give an optimal solution for the sphere packing problem and a large class of energy minimization problems in dimension $8$
\cite{Petrache20,Viazovska17,Viazovska22}.
From an approximation point of view, weight lattices of root systems describe Gaussian cubature \cite{Xu09,Moody2011}, 
a rare occurence on multidimensional domains.
In a different direction, the triangulations associated with 
infinite families of root systems are relevant in graphics and computational geometry, see for instance \cite{Choudhary20} and references within.

%% Details on what we do
The distinguishing feature of the lattices associated to crystallographic root system is their intrisic symmetry.
This latter is given by the so called Weyl group $\weyl$, a finite group generated by orthogonal reflections w.r.t. $\sprod{\cdot,\cdot}$. 
It is this feature that we emphasize and offer to exploit in an optimization context.  
We present a new approach to numerically solve the 
trigonometric optimization problem 
\begin{equation}\label{OptiProblemExpo}
f^* := \min\limits_{u\in \R^n} f(u)
\end{equation}
under the assumption of crystallographic symmetry, that is, for $A\in\weyl$, we have $f(A\,u) = f(u)$, or equivalently $c_{A\,\weight} = c_\weight$.
The first step of our approach, 
in \Cref{section_trigonometric}, 
is a symmetry reduction that translates the trigonometric optimization above to the problem of optimizing a polynomial 
over a semi--algebraic set, a subject that 
ripened in the last two decades \cite{lasserre01,parrilo03,sturmfels03,parrilo05,klerk05,lasserre09,laurent09,blekherman12,lasserre21}. 
The second step of our approach, 
in \Cref{section_optimization}, is thus an adaptation of 
Lasserre's hierarchy of moment relaxations and sums of squares reinforcements.
We indeed modify the hierarchy introduced in \cite{holscherer05,holscherer06,lasserre06} to work directly in the basis of generalized Chebyshev polynomials. 
These  are not homogeneous but naturally 
filtered by a weighted degree, different from the usual degree. 

The simplest case of this symmetry reduction scheme, the univariate case, is obvious but maybe worth reviewing 
to get the initial idea.
The group is then $\weyl=\{1,-1\}$ 
and the invariance condition is thus
$f(-u)=f(u)$ for all $u\in\R$.
That implies that one can write 
$$f(u)= \sum_{k\in \N} \frac{c_k}{2}	
\left(\exp(2 \pi \mathrm{i}\, k u) + \exp(-2 \pi \mathrm{i} \,ku)\right) 
= \sum_{k\in \N} c_k \,\cos(2\pi\, k u)
= \sum_{k\in \N} c_k \, T_k(\cos(2\pi\, u)),$$ 
where $\{T_k\}_{k\in\N}$ 
are the Chebyshev polynomials of the first kind. 
We  thus have
\[
f^* := \min\limits_{u\in \R^n} f(u) = 
\min\limits_{z^2\leq 1} \sum_{k\in \N} c_k  T_k(z)
\]
the right hand side being a polynomial optimization problem with semi--algebraic constraints.

With $\Omega=\Z^n$ and $\weyl=\{1,-1\}^n$, 
one can use products of univariate Chebyshev polynomials to operate a similar symmetry reduction. 
This is the $\RootA[1]\times \ldots \times\RootA[1]$ case. 
We look at all the lattices associated to crystallographic root systems, 
offering a wider range of  domains of periodicity 
(hexagon, rhombic dodecahedron, icositetrachoron, \ldots) 
and simplices of any dimension, 
or cartesian products of these, as fundamental domains.
The key to the symmetry reduction then is the existence and properties of generalized Chebyshev polynomials. 
They allow to rewrite any invariant trigonometric 
polynomials as polynomials of the fundamental generalized cosines.
These generalized Chebyshev polynomials arose in different contexts, in particular in the search of multivariate 
orthogonal polynomials \cite{DunnLidl1980,EierLidl1982,HoffmanWithers,MacDonald1990,beerends91}.
A more recent development is the description of their domain of orthogonality, the image of the generalized cosines,
as a compact semi--algebraic set given by a
unified and explicit polynomial matrix inequality \cite{chromaticissac22,TOrbits,TobiasThesis}.
Such a description is necessary to proceed algorithmically with the obtained polynomial optimization problem.

In the algorithmic approach, we solve a primal/dual semi--definite program (SDP) that models a moment--relaxation/sums of squares reinforcement in terms of generalized Chebyshev polynomials. 
Our \textsc{Maple} package 
\textsc{GeneralizedChebyshev}\footnote{\href{https://github.com/TobiasMetzlaff/GeneralizedChebyshev}{https://github.com/TobiasMetzlaff/GeneralizedChebyshev}} 
allows to compute the parameters of the SDP, specifically the matrices which impose the semi--definite constraints. 
The user can then solve the problem with a SDP solver of their personal preference. 
Beyond that, the package offers a large variety of tools, including the matrices from \cite{TOrbits}, 
a function to rewrite invariants in terms of generalized Chebyshev polynomials and an implemented recurrence formula for their computation. 
We can thus compare our method with the one in \cite{dumitrescu07} in practice. 
We observe in several examples throughout \Cref{sec_casestudy} that the quality of the approximation is improved, while the computational complexity is reduced. 

As a second set of contributions, in \Cref{section_chromatic},
we apply  our method 
to the computation of spectral bounds 
for chromatic numbers of set avoiding graphs. 
The first such graph considered was the Euclidean distance graph 
\cite{Soifer09,BdCOV,BPT15,deGrey18}, where the vertices are the points of $\R^n$ and the set to be avoided is the sphere. 
As set of vertices we consider either $\R^n$, 
or a lattice thereof.
As for the set to be avoided we mostly consider the boundary of a polytope with crystallographic symmetry.
Choosing  appropriate discrete measures on the polytope, 
the  spectral bound from \cite{BdCOV} 
made specific to the chromatic number can be expressed 
as the solution of  a max--min  optimization problem on a trigonometric polynomial. 
Our symmetry reduction technique 
of \Cref{section_trigonometric} then allows us to retrieve,
with simple proofs, the chromatic number 
of the $\RootA$ lattice (\Cref{thm_LatticeChromatic}), 
of the graph avoiding the crosspolytope of radius $2$ in 
$\Z^n$ (\Cref{thm_ZnL1r2bound}), 
and of the graph avoiding the cube in $\R^n$  
(\Cref{thm_cubeRnbound}). 
In other cases, 
we apply the algorithm in \Cref{section_optimization} to
compute lower bounds numerically. We improve on \cite{furedi04}  
by $+2$ for the chromatic number of $\Z^4$ avoiding the crosspolytope of radius $4$ (\Cref{B4C4D4L1Table2}).
We also give further bounds 
for the rhombic dodecahedron (\Cref{A3RhombicTable}) 
as well as the icositetrachroron 
(\Cref{B4D4IcositetrachoronTable2}). 

\section{Crystallographic symmetries}
\label{section_trigonometric}
\setcounter{equation}{0}

To rewrite the trigonometric optimization problem in \Cref{OptiProblemExpo} to a polynomial optimization problem, we require the lattice $\Weights$ to be full--dimensional and stable under some finite reflection group $\weyl$, that is, $\weyl \, \Weights =\Weights$. 
Then $\weyl$ must be the Weyl group of some crystallographic root system \cite[Chapter 9]{kane13} and $\Weights$ is the associated weight lattice. 
We need several facts from the theory of Lie algebras, root systems and lattices, which come from \cite{bourbaki456,humphreys12,conway1988a}. 
In particular, we need \Cref{theorem_BourbakiGenerators}, which states that any trigonometric polynomial with crystallographic symmetry can be written uniquely as a polynomial in fundamental invariants, also known as the generalized cosines. 
Subsequently, the feasible region of the so obtained polynomial optimization problem is the image of the fundamental invariants, a compact basic semi--algebraic set whose equations were given explicitely in \cite{chromaticissac22,TOrbits,TobiasThesis}.

\subsection{Root systems and Weyl groups}

Denote by $\sprod{\cdot , \cdot }$ the Euclidean scalar product. A subset $\Roots\subseteq \R^n$ is called a \textbf{root system} in $\R^n$, if the following conditions hold.
\begin{enumerate}
	\item[R1] $\Roots$ is finite, spans $\R^n$ and does not contain $0$.
	\item[R2] If $\rho, \tilde{\rho} \in \Roots$, then $\langle\tilde{\rho},\rho^\vee\rangle \in \Z$, where $\rho^\vee:=\frac{2\,\rho}{\langle\rho,\rho\rangle}$.
	\item[R3] If $\rho, \tilde{\rho} \in \Roots$, then $s_\rho(\tilde{\rho}) \in \Roots$, where $s_\rho$ is the  reflection defined by $ s_\rho(u) = u - \langle u,\rho^\vee\rangle \rho$ for $u \in  \R^n$.
\end{enumerate}
The elements of $\Roots$ are called \textbf{roots} and the \textbf{rank} of $\Roots$ is $\rank(\Roots):=\dim(\R^n)$. The elements $\rho^\vee$ are called the \textbf{coroots}. Furthermore, $\Roots$ is called \textbf{reduced}, if additionally the following condition holds.
\begin{enumerate}
	\item[R4] For $\rho \in \Roots$ and $c \in \R$, we have $c\rho \in \Roots$ if and only if $c = \pm 1$.$\phantom{\frac{1}{2}}$
\end{enumerate}

We assume that the ``reduced'' property R4 always holds when we speak of a ``root system''. 
Sometimes the ``crystallographic'' property R2 is emphasized as a seperate condition \cite{kane13}. For visualizations, see \Cref{example_2RootSys}.

\subsubsection{Weyl group and weights}

The \textbf{Weyl group} $\weyl$ of $\Roots$ is the group generated by the reflections $s_\roots$ for $\roots \in \Roots$. 
This is a finite subgroup of the orthogonal group on $\R^n$ with respect to the inner product $\sprod{\cdot , \cdot }$. 
The Weyl groups are the groups we consider in this article and now we define the lattices of interest.

A subset $\Base=\{\rho_1,\ldots,\rho_n\}\subseteq \Roots$ is called a \textbf{base}, if the following conditions hold.
\begin{enumerate}
	\item[B1] $\Base$ is a basis of $\R^n$.
	\item[B2] Every root $\rho \in \Roots$ can be written as $\rho = \alpha_1\,\rho_1 + \ldots +\alpha_n \, \rho_n$ or $\rho = -\alpha_1\,\rho_1 - \ldots -\alpha_n \,\rho_n$ for some $\alpha \in \N^n$.
\end{enumerate}
Every root system contains a base \cite[Chapitre VI, \S 1, Theorem 3]{bourbaki456}. A partial ordering $\succeq$ on $\R^n$ is defined by $u \succeq v$ if and only if $u-v  = \alpha_1\,\roots_1 + \ldots +\alpha_n\, \roots_n$ for some $\alpha \in \N^n$.

A \textbf{weight} of $\Roots$ is an element $\weight\in \R^n$, such that, for all $\roots\in\Roots$, we have $\sprod{\weight,\roots^\vee}  \in \Z$. The set of weights forms a lattice $\Weights$, called the \textbf{weight lattice}. By the condition R2, every root is a weight. For a base $\Base=\{\rho_1,\ldots,\rho_n\}$, the \textbf{fundamental weights} are the elements $\{ \fweight{1}, \ldots , \fweight{n}\}$, such that, for $1\leq i,j \leq n$, $\sprod{ \fweight{i}, \roots_j^\vee} = \delta_{i,j}$. The weight lattice is left invariant under the Weyl group, that is, $\weyl\Weights = \Weights$.

The \textbf{fundamental Weyl chamber} of $\weyl$ relative to $\Base$ is
\[
	\PC
:=	\{ u \in \R^n \,\vert\, \forall \, \roots\in\Base:\, \sprod{ u,\roots_i} > 0 \}.
\]
The closure $\overline{\PC}$ is a fundamental domain of $\weyl$ \cite[Chapitre V, \S 3, Th\'{e}or\`{e}me 2]{bourbaki456}. Hence, $\overline{\PC}$ contains exactly one element per $\weyl$--orbit and the weights in $\overline{\PC}$ are called \textbf{dominant}. We denote $\Weights^+ := \Weights\cap\overline{\PC}$.
 
\begin{proposition}\label{remark_PermutationOrb}
For $\weight\in\Weights^+$, there exists a unique $\conj{\weight}\in\Weights^+$ with $-\weight\in\weyl\conj{\weight}$. Furthermore, there exists a permutation $\sigma\in\mathfrak{S}_n$ of order at most $2$, such that, for all $1 \leq i \leq n$, we have $\conj{\fweight{i}} = \fweight{\sigma(i)}$.
\end{proposition}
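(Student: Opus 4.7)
The plan is first to establish existence and uniqueness of $\conj{\weight}$, and then to identify the map $\fweight{i}\mapsto\conj{\fweight{i}}$ with the action of a distinguished element of $\weyl$, from which the permutation structure follows.

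For the first assertion, I would observe that $-\weight$ lies in the lattice $\Weights$, so its Weyl orbit $\weyl(-\weight)$ is contained in $\Weights$. Since $\overline{\PC}$ is a fundamental domain for $\weyl$ (cited from Bourbaki above), it meets every orbit in exactly one point, so there is a unique $\conj{\weight}\in\overline{\PC}\cap\weyl(-\weight)$. This element is a weight, hence lies in $\Weights^+=\Weights\cap\overline{\PC}$, and satisfies $-\weight\in\weyl\conj{\weight}$ by construction.

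For the second assertion, I would bring in the longest element $w_0\in\weyl$, characterized by sending every positive root to a negative one, equivalently by $w_0(\overline{\PC}) = -\overline{\PC}$. Since $w_0$ is an involution, $-w_0$ is a linear involution of $\R^n$ that preserves $\overline{\PC}$, and preserves the weight lattice $\Weights$ (because $w_0\Weights=\Weights$ and $-\Weights=\Weights$). By the uniqueness proved above, $\conj{\weight} = -w_0\,\weight$ for every $\weight\in\Weights^+$.

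The decisive step is to show that $-w_0$ permutes the fundamental weights. Here I would use that $\Weights^+$ is freely generated as an additive monoid by $\fweight{1},\ldots,\fweight{n}$, since any dominant weight expands uniquely as $\weight = \sum_{i=1}^n \sprod{\weight,\roots_i^\vee}\,\fweight{i}$ with nonnegative integer coefficients. Because $-w_0$ is a monoid automorphism of $\Weights^+$, it must permute the indecomposable generators, so there is $\sigma\in\symmgroup$ with $\conj{\fweight{i}} = -w_0\,\fweight{i} = \fweight{\sigma(i)}$. Finally, $(-w_0)^2 = w_0^2 = \mathrm{id}$ forces $\sigma^2 = \mathrm{id}$, so $\sigma$ has order at most $2$. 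The main obstacle is justifying that $-w_0$ preserves the fundamental chamber, which relies on the standard but non-trivial existence and involution property of the longest Weyl element; once this is in place, the rest is bookkeeping with the duality $\sprod{\fweight{i},\roots_j^\vee}=\delta_{i,j}$ between simple coroots and fundamental weights.
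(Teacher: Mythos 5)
Your proof is correct, and it follows the same backbone as the paper's: both arguments hinge on the longest element $w_0$ of $\weyl$ (the paper calls it $A$), the fact that $w_0\PC=-\PC$ so that $-w_0$ is an involution preserving the closed chamber and the weight lattice, whence $\conj{\weight}=-w_0\,\weight$. Where you diverge is in the final step showing that $-w_0$ permutes the fundamental weights: the paper defines $\sigma$ by the induced permutation of the base, $\roots_i=-A\,\roots_{\sigma(i)}$, and then transfers this to the $\fweight{i}$ through the $\weyl$--invariance of the inner product and the duality $\sprod{\fweight{i},\roots_j^\vee}=\delta_{i,j}$; you instead observe that $\Weights^+$ is the free commutative monoid on $\fweight{1},\ldots,\fweight{n}$ and that a monoid automorphism must permute the indecomposable generators. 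Both are valid; the paper's computation is more explicit and makes visible that $\sigma$ is the same permutation acting on the simple roots and on the fundamental weights (which is used later, e.g.\ in the proof of \Cref{LemmaComplexConjugatePoly}), while your monoid argument is slightly more conceptual and avoids the pairing computation. You also spell out the uniqueness of $\conj{\weight}$ via the fundamental-domain property, which the paper leaves implicit; that is a small but genuine improvement in completeness.
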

\begin{proof}
Let $A$ be the longest element of $\weyl$ \cite[Chapitre VI, \S 1, Proposition 17 et Corollaire 3]{bourbaki456}. Then $A \PC = -\PC$ and so $\conj{\weight} = -A \, \weight\in\Weights^+$. We define the permutation $\sigma\in\mathfrak{S}_n$ via the property $\rho_i=-A\,\rho_{\sigma(i)}$ for $1\leq i\leq n$. Since $A$ is an involution and the inner product is $\weyl$--invariant, we obtain
\[
\conj{\fweight{i}}
=	-A\,\fweight{i}
=	\sum\limits_{j=1}^n \sprod{-A\,\fweight{i},\roots_{j}^\vee} \, \fweight{j}
=	\sum\limits_{j=1}^n \sprod{\fweight{i},-A\,\roots_{j}^\vee} \, \fweight{j}
=	\sum\limits_{j=1}^n \sprod{\fweight{i},\roots_{\sigma(j)}^\vee} \, \fweight{j}
=	\fweight{\sigma(i)}.
\]
\end{proof}

\subsubsection{The Vorono\"{i} cell}

The set of all coroots $\roots^\vee$ spans a lattice $\Corootlattice$ in $\R^n$, called the \textbf{coroot lattice}. This Abelian group acts on $\R^n$ by translation and is the dual lattice of the weight lattice, that is, $\Weights^*=\{u\in \R^n\,\vert\,\forall\,\weight\in\Weights:\,\sprod{\weight,u}\in\Z\}=\Corootlattice$.

Denote by $\norm{\cdot}$ the Euclidean norm. The \textbf{Vorono\"{i} cell} of $\Corootlattice$ is 
\[
\Vor(\Corootlattice)
:=	\{ u\in \R^n \,\vert\, \forall \lambda\in\Corootlattice:\,\norm{u}\leq \norm{u - \lambda} \}
\]
and tiles $\R^n$ by $\Corootlattice$--translation, that is,
\begin{equation}\label{eq_VoronoiTiles}
	\R^n
	=	\bigcup\limits_{\lambda\in\Corootlattice} (\Vor(\Corootlattice)+\lambda),
\end{equation}
where ``$+$'' denotes the Minkowski sum. The intersection of two distinct cells $\Vor(\Corootlattice)+\lambda$ and $\Vor(\Corootlattice)+\tilde{\lambda}$ is either empty or a common facet, that is a face of dimension $n-1$ \cite[Chapter 2, \S 1.2]{conway1988a}.

The \textbf{affine Weyl group} is the group generated by the reflections $s_{\roots,\ell}$ for $\roots\in\Roots$ and $\ell\in\Z$, where $s_{\roots,\ell}$ is defined via $s_{\roots,\ell}(u)=s_{\roots}(u)+\ell\,\roots^\vee$, see \cite[Chapitre VI, \S 2, D\'{e}finition 1]{bourbaki456}. It can also be seen as the semi--direct product $\weyl\ltimes \Corootlattice$ \cite[Chapitre VI, \S 2, Proposition 1]{bourbaki456}. We are interested in the chambers of this infinite reflection group, which are called \textbf{alcoves} to avoid confusion. In particular, the closure of any alcove is a fundamental domain for $\weyl\ltimes \Corootlattice$.

\begin{proposition}
\emph{\cite[Chapitre VI, \S2, Proposition 4]{bourbaki456} and \cite[Chapter 21, \S 3, Theorem 5]{conway1988a}} There is a unique alcove of $\weyl\ltimes \Corootlattice$ in $\PC$, which contains $0$ in its closure $\fundom$. We have $\Vor(\Corootlattice) = \weyl\,\fundom$.
\end{proposition}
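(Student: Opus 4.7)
I would prove the two assertions in sequence. For the first, the reflecting hyperplanes of $\weyl\ltimes\Corootlattice$ are the affine hyperplanes $H_{\rho,\ell}=\{u\in\R^n\,|\,\sprod{u,\rho^\vee}=\ell\}$ for $\rho\in\Roots$ and $\ell\in\Z$, and the alcoves are the connected components of their complement. Using $H_{\rho,\ell}=H_{-\rho,-\ell}$, one may restrict to positive roots $\rho\succeq 0$, and among these only the values $\ell\geq 1$ give hyperplanes meeting the open chamber $\PC$. The alcove whose closure contains $0$ is therefore cut out by the walls of $\overline{\PC}$ together with a single additional affine constraint coming from the highest root $\rho_0$, giving
\begin{equation*}
\fundom=\{u\in\overline{\PC}\,|\,\sprod{u,\rho_0^\vee}\leq 1\}.
\end{equation*}
Uniqueness follows because this is the only alcove of $\weyl\ltimes\Corootlattice$ inside $\PC$ adjacent to the origin, which is the content of the cited Bourbaki result.

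For the identity $\Vor(\Corootlattice)=\weyl\,\fundom$, I would argue by volume comparison. Both sets are compact, $\weyl$-invariant, and tile $\R^n$ under translation by $\Corootlattice$: the Voronoi cell by \Cref{eq_VoronoiTiles}, and $\weyl\,\fundom$ because $\fundom$ is a fundamental domain for $\weyl\ltimes\Corootlattice$. In particular both have Lebesgue measure equal to the covolume of $\Corootlattice$, so it is enough to establish the inclusion $\fundom\subseteq\Vor(\Corootlattice)$; equality will then follow from the tiling property together with equality of volumes.

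For this inclusion I must verify $2\sprod{u,\lambda}\leq\sprod{\lambda,\lambda}$ for every $u\in\fundom$ and every $\lambda\in\Corootlattice\setminus\{0\}$. Using that $\weyl\,\Corootlattice=\Corootlattice$ and that the Euclidean norm is $\weyl$-invariant, I may replace $\lambda$ by a $\weyl$-translate lying in $\overline{\PC}$, so that $\lambda$ becomes a nonnegative integer combination of the coroots. The main obstacle is then to reduce this family of infinitely many inequalities to the single binding constraint $\sprod{u,\rho_0^\vee}\leq 1$ defining $\fundom$. This step relies on identifying the Voronoi-relevant vectors of $\Corootlattice$: up to $\weyl$-symmetry and rescaling they are controlled by the coroots themselves, and $\rho_0^\vee$ dominates the remaining dominant coroots on $\overline{\PC}$. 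This is the combinatorial content encoded in the cited Conway--Sloane statement, and it is the part I would not attempt to reprove from scratch.
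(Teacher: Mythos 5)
The paper does not prove this proposition at all: both assertions are imported wholesale from Bourbaki and Conway--Sloane, so your sketch can only be compared with the cited sources. As an outline it is sensible, and your organisation of the second assertion --- reduce $\Vor(\Corootlattice)=\weyl\,\fundom$ to the single inclusion $\fundom\subseteq\Vor(\Corootlattice)$ by observing that both sets are $\weyl$--invariant, tile $\R^n$ under $\Corootlattice$--translation (\Cref{eq_VoronoiTiles} for the Vorono\"{i} cell, the fundamental--domain property of $\fundom$ for $\weyl\,\fundom$), and hence have the same volume --- is a clean way to combine the two halves. Your reduction of the inequalities $2\sprod{u,\lambda}\leq\sprod{\lambda,\lambda}$ to dominant $\lambda$ is also valid, though it silently uses that $\sprod{u,A\lambda}\leq\sprod{u,\lambda^+}$ for $u\in\overline{\PC}$ and $\lambda^+$ the dominant representative (because $\lambda^+-A\lambda$ is a nonnegative combination of simple roots); and the final identification of the binding constraints is exactly the determination of the strict Vorono\"{i} vectors, which the paper itself only establishes later in \Cref{prop_StrictVoronoiHighestRoot} and which you, like the paper, leave to Conway--Sloane.

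There is, however, one concrete slip: with the paper's definition $s_{\roots,\ell}(u)=s_{\roots}(u)+\ell\,\roots^\vee$, the reflecting hyperplane of $s_{\roots,\ell}$ is $\{u\in\R^n\,\vert\,\sprod{u,\roots}=\ell\}$, paired against the \emph{root}, not the coroot. Consequently the unique binding affine constraint for the alcove at the origin is $\sprod{u,\highestroot}\leq 1$, as stated in \Cref{prop_FundomAffineWeyl}, and not $\sprod{u,\highestroot^\vee}\leq 1$ as you write. For the simply--laced types the two coincide, but for $\RootB$, $\RootC$ and $\RootG[2]$ they differ by the factor $\sprod{\highestroot,\highestroot}/2$, and moreover with the coroot pairing the dominating linear form on $\overline{\PC}$ would come from the coroot of the highest \emph{short} root rather than from $\highestroot$, so your $\fundom$ would be the wrong simplex. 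This is a normalisation error rather than a structural one, but it propagates into the inclusion $\fundom\subseteq\Vor(\Corootlattice)$ you then need to verify, so it must be fixed before the volume argument can close.
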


The rest of this subsection is devoted to describe $\fundom$. Assume that $\R^n=V^{(1)}\oplus\ldots\oplus V^{(k)}$ is the direct sum of proper orthogonal subspaces and that, for each $1\leq i\leq k$, $\Roots^{(i)}$ is a root system in $V^{(i)}$. Then $\Roots:=\Roots^{(1)}\cup\ldots\cup\Roots^{(k)}$ is a root system in $\R^n$ and called the \textbf{direct sum} of the $\Roots^{(i)}$. If a root system is not the direct sum of at least two root systems, then it is called \textbf{irreducible}, see \cite[Chapitre VI, \S 1.2]{bourbaki456}.

The Weyl group $\weyl$ is the product of the Weyl groups corresponding to the irreducible components, see the discussion before \cite[Chapitre VI, \S 1, Proposition 5]{bourbaki456}. Furthermore, any alcove of the affine Weyl group is the product of alcoves corresponding to the irreducible components, see the discussion after \cite[Chapitre VI, \S 2, Proposition 2]{bourbaki456}. We are thus left to determine $\fundom$ for irreducible root systems. If $\Roots$ is irreducible with base $\Base$, then there is a unique positive root $\highestroot \in \Roots^+$, which is maximal with respect to the partial ordering $\succeq$ induced by $\Base$ \cite[Chapitre VI, \S 1, Proposition 25]{bourbaki456}. We call $\highestroot$ the \textbf{highest root}.

\begin{proposition}\label{prop_FundomAffineWeyl}
\emph{\cite[Chapitre VI, \S 2, Proposition 5 et Corollaire]{bourbaki456}} Let $\Roots$ be an irreducible root system and $\Base = \{\roots_1, \ldots , \roots_n\}$ be a base, such that $\highestroot = \alpha_1 \, \roots_1^\vee + \ldots + \alpha_n \, \roots_n^\vee$ is the highest root of $\Roots$ for some $\alpha\in\R^n$. Then
\[
	\fundom
=	\{ u \in \R^n \,\vert\, \forall \, 1\leq i \leq n: \, \sprod{u,\roots_i} \geq 0 \mbox{ and } \sprod{u,\highestroot} \leq 1 \}
\]
is a fundamental domain for $\weyl\ltimes \Corootlattice$. Furthermore, for $1\leq i\leq n$, we have $\alpha_i > 0$ and
\[
	\fundom
=	\mathrm{ConvHull} \left(0,\,\frac{\fweight{1}}{\alpha_1},\,\ldots,\,\frac{\fweight{n}}{\alpha_n}\right).
\]
\end{proposition}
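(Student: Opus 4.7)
The plan is to identify $\fundom$ with the alcove of $\weyl\ltimes\Corootlattice$ whose closure contains the origin, and then to read off the vertex description. First I would observe that the $n$ inequalities $\sprod{u,\roots_i}\geq 0$ cutting out $\fundom$ are exactly the defining inequalities of the closed fundamental Weyl chamber $\overline{\PC}$, so $\fundom\subseteq\overline{\PC}$. The alcoves of $\weyl\ltimes\Corootlattice$ are the connected components of the complement in $\R^n$ of the affine reflection hyperplanes $\{u\mid \sprod{u,\roots^\vee}=\ell\}$ with $\roots\in\Roots$ and $\ell\in\Z$. The alcove adjacent to $0$ inside $\PC$ is therefore bounded by the walls $\sprod{u,\roots_i}=0$ of $\PC$ together with the first affine hyperplane crossed when moving from $0$ into the interior of $\PC$.

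The key point is to recognise this ``upper'' wall as $\sprod{u,\highestroot}=1$. For any positive root $\roots$, maximality of $\highestroot$ in the partial order $\succeq$ gives $\highestroot-\roots=\sum_i c_i\,\roots_i$ with $c_i\geq 0$, so $\sprod{u,\highestroot-\roots}=\sum_i c_i\sprod{u,\roots_i}\geq 0$ on $\overline{\PC}$. Consequently $\sprod{u,\highestroot}$ dominates $\sprod{u,\roots}$ on $\overline{\PC}$, from which it follows that the hyperplane $\sprod{u,\highestroot}=1$ is the first affine wall met when moving away from $0$ within $\overline{\PC}$. Positivity of every $\alpha_i$ is then an immediate consequence of irreducibility: if some $\alpha_i$ vanished, the support of $\highestroot$ would split the Dynkin diagram of $\Roots$ into disjoint components, contradicting the hypothesis.

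For the convex hull description, I would observe that $\fundom$ is cut out by exactly $n+1$ affine inequalities in $\R^n$, and that the constraint $\sprod{u,\highestroot}\leq 1$ bounds the cone $\overline{\PC}$ since $\highestroot$ lies in its interior. Thus $\fundom$ is an $n$-simplex, and its $n+1$ vertices arise by saturating any $n$ of the bounding inequalities. Saturating the $n$ chamber conditions $\sprod{u,\roots_i}=0$ gives the origin; dropping only the $j$-th one forces $u=t\,\fweight{j}$ for some $t\geq 0$, and the equality $\sprod{u,\highestroot}=1$ together with $\highestroot=\sum_i\alpha_i\roots_i^\vee$ and $\sprod{\fweight{j},\roots_i^\vee}=\delta_{i,j}$ yields $t\alpha_j=1$, that is, $t=1/\alpha_j$. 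The main obstacle is the second paragraph: isolating the highest root as the unique one whose affine wall $\sprod{u,\roots^\vee}=1$ meets $\overline{\PC}$ closest to $0$. This is the point where Bourbaki's Proposition~5 enters, and where both the irreducibility of $\Roots$ and the maximality of $\highestroot$ are essential.
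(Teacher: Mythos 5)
The paper does not prove this proposition at all: it is imported verbatim from Bourbaki (Ch.~VI, \S 2, Proposition 5 and its Corollary), so there is no in-paper argument to compare against. Your sketch is the standard proof of that Bourbaki result and its overall structure is sound: identify $\fundom$ with the closure of the alcove at $0$ inside $\PC$, use maximality of $\highestroot$ for $\succeq$ to get $\sprod{u,\highestroot}\geq\sprod{u,\roots}$ on $\overline{\PC}$ for every positive root $\roots$, conclude that $\sprod{u,\highestroot}=1$ is the only affine wall of that alcove besides the chamber walls, and extract the vertices by saturating $n$ of the $n+1$ inequalities. Three small repairs are needed. First, with the paper's normalization $s_{\roots,\ell}(u)=s_\roots(u)+\ell\,\roots^\vee$, the reflection hyperplanes are $\{u\mid\sprod{u,\roots}=\ell\}$, not $\{u\mid\sprod{u,\roots^\vee}=\ell\}$; your domination inequality is already phrased for $\sprod{u,\roots}$, so everything goes through once the hyperplanes are written consistently. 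Second, boundedness of $\fundom$ does not follow from ``$\highestroot$ lies in the interior of $\overline{\PC}$'' --- that is false in general (e.g.\ for $\RootA[3]$ one has $\sprod{\highestroot,\roots_2^\vee}=0$); the correct statement is that all $\alpha_i>0$ places $\highestroot$ in the interior of the cone generated by the $\roots_i$, which is the dual cone of $\overline{\PC}$, whence $\sprod{u,\highestroot}>0$ for every nonzero $u\in\overline{\PC}$ and the slice $\sprod{u,\highestroot}\leq 1$ is bounded. Third, ``a vanishing $\alpha_i$ splits the Dynkin diagram'' needs the dominance of $\highestroot$: for $j$ outside the support, $\sprod{\highestroot,\roots_j^\vee}\geq 0$ combined with $\sprod{\roots_k,\roots_j^\vee}\leq 0$ for $k\neq j$ forces every node of the support to be non-adjacent to $j$, and only then does connectedness of the diagram give a contradiction.
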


In particular, if $\Roots$ is irreducible, then any closed alcove of the affine Weyl group is a simplex.

Every root system can be uniquely decomposed into irreducible components \cite[Chapitre VI, \S 1, Proposition 6]{bourbaki456} and there are only finitely many cases \cite[Chapitre VI, \S 4, Th\'{e}or\`{e}me 3]{bourbaki456} denoted by $\RootA$, $\RootB$, $\RootC$ $(n\geq 2)$, $\RootD$ $(n\geq 4)$, $\RootE[6,7,8]$, $\RootF[4]$ and $\RootG[2]$. Throughout this article, we shall focus on the four infinite families $\RootA$, $\RootB$, $\RootC$, $\RootD$ and the special case $\RootG[2]$. For those root systems, the base, fundamental weights and Weyl group are recalled in \Cref{Appendix_IrredRootSys}.

\begin{example}\label{example_2RootSys}
In the $2$--dimensional case, we can consider the following irreducible root systems.

\begin{minipage}{0.45\textwidth}
	\begin{figure}[H]
		\begin{minipage}{0.4\textwidth}
			\begin{flushright}
				\begin{overpic}[width=\textwidth,grid=false,tics=10]{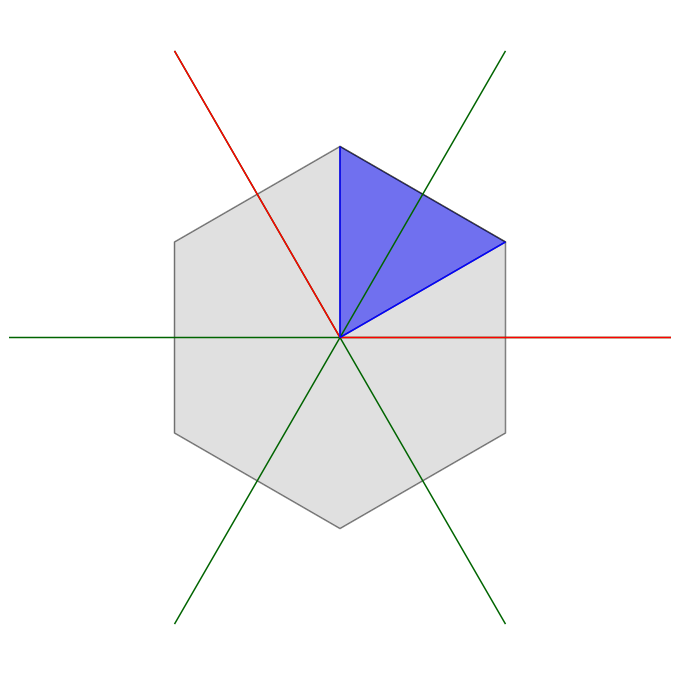}
					\put (20,95) {\large $\displaystyle \rho_2$}
					\put (95,55) {\large $\displaystyle \rho_1$}
					\put (75,66) {\large $\displaystyle \fweight{1}$}
					\put (50,81) {\large $\displaystyle \fweight{2}$}
				\end{overpic}
			\end{flushright}
		\end{minipage} \hfill
		\begin{minipage}{0.5\textwidth}
			$\weyl(\RootA[2]) \cong \mathfrak{S}_3$\\
			$\fweight{1}=[ 2,-1,-1]^t /3$\\
			$\fweight{2}=[ 1, 1,-2]^t /3$\\
			$\rho_{1}=[1,-1,0]^t =\rho_{1}^\vee$\\
			$\rho_{2}=[0,1,-1]^t =\rho_{2}^\vee$\\
			$\highestroot = \rho_{1}^\vee + \rho_{2}^\vee$
		\end{minipage}
		\centering
		\caption{The root system $\RootA[2]$ in $\R^3/\langle [1,1,1]^t \rangle$.}\label{figA2}
		\label{example_rootsystemA2}
	\end{figure}
\end{minipage}\hfill
\begin{minipage}{0.45\textwidth}
	\begin{figure}[H]
		\begin{minipage}{0.4\textwidth}
			\begin{flushright}
				\begin{overpic}[width=\textwidth,,tics=10]{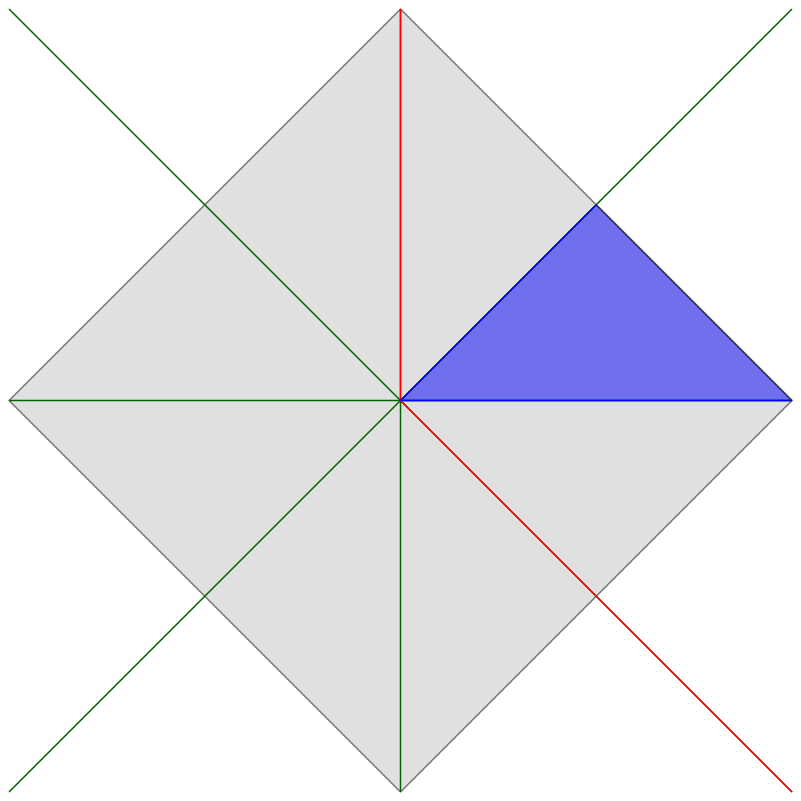}
					\put (53,100) {\large $\displaystyle \rho_2$}
					\put (99,  8) {\large $\displaystyle \rho_1$}
					\put (69, 83) {\large $\displaystyle \fweight{2}$}
					\put (95, 55) {\large $\displaystyle \fweight{1}$}
				\end{overpic}
			\end{flushright}
		\end{minipage} \hfill
		\begin{minipage}{0.5\textwidth}
			$\weyl(\RootB[2]) \cong \mathfrak{S}_2\ltimes\{\pm 1\}^2$\\
			$\fweight{1}=[ 1,0]^t$\\
			$\fweight{2}=[ 1,1]^t /2$\\
			$\rho_{1}=[1,-1]^t =\rho_{1}^\vee$\\
			$\rho_{2}=[0,1]^t =\rho_{2}^\vee/2$\\
			$\highestroot=\rho_{1}^\vee+\rho_{2}^\vee$
		\end{minipage}
		\centering
		\caption{The root system $\RootB[2]$ in $\R^2$.}\label{example_rootsystemB2}\label{figB2}
	\end{figure}
\end{minipage}

~\vspace{.2cm}

\begin{minipage}{0.45\textwidth}
	\begin{figure}[H]
		\begin{minipage}{0.4\textwidth}
			\begin{flushright}
				\begin{overpic}[width=\textwidth,grid=false,tics=10]{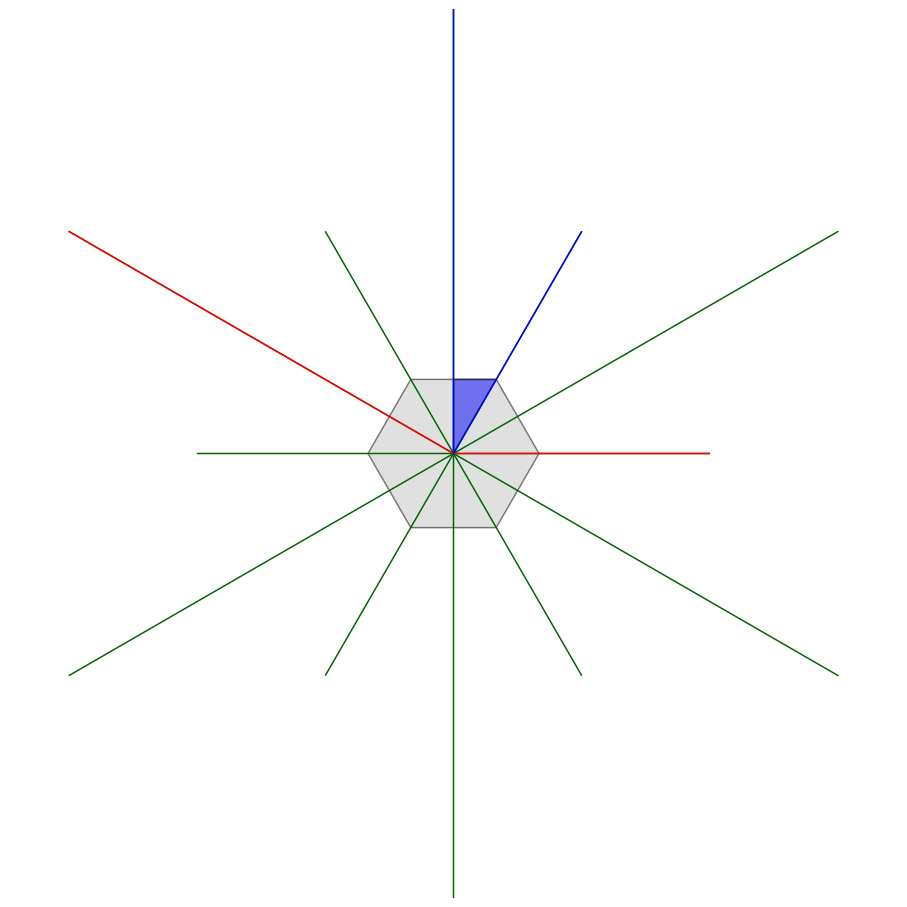}
					\put (15,75) {\large $\displaystyle \rho_2$}
					\put (80,55) {\large $\displaystyle \rho_1$}
					\put (52,95) {\large $\displaystyle \fweight{2}$}
					\put (65,75) {\large $\displaystyle \fweight{1}$}
				\end{overpic}
			\end{flushright}
		\end{minipage} \hfill
		\begin{minipage}{0.5\textwidth}
			$\weyl(\RootG[2])\cong\mathfrak{S}_3\ltimes \{\pm 1\}$\\
			$\fweight{1}=[0,-1,1,]^t$\\
			$\fweight{2}=[-1,-1,2]^t$\\
			$\rho_{1}=[1,-1,0]^t = \rho_{1}^\vee$\\
			$\rho_{2}=[-2,1,1]^t = 3\,\rho_{1}^\vee $\\
			$\highestroot = 3\,\rho_{1}^\vee + 6\,\rho_{2}^\vee$
		\end{minipage}
		\centering
		\caption{The root system $\RootG[2]$ in $\R^3/\langle [1,1,1]^t \rangle$.}\label{example_rootsystemG2}
	\end{figure}
\end{minipage}\hfill
\begin{minipage}{0.45\textwidth}
	\begin{figure}[H]
		\begin{minipage}{0.4\textwidth}
			\begin{flushright}
				\begin{overpic}[width=\textwidth,,tics=10]{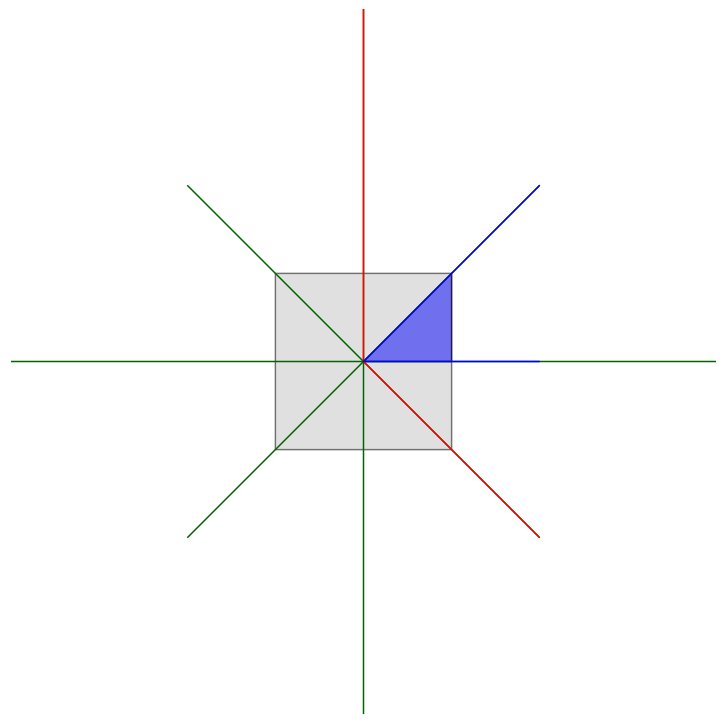}
					\put (53, 95) {\large $\displaystyle \rho_2$}
					\put (77, 20) {\large $\displaystyle \rho_1$}
					\put (69, 80) {\large $\displaystyle \fweight{2}$}
					\put (77, 54) {\large $\displaystyle \fweight{1}$}
				\end{overpic}
			\end{flushright}
		\end{minipage} \hfill
		\begin{minipage}{0.5\textwidth}
			$\weyl(\RootC[2]) \cong \mathfrak{S}_2\ltimes\{\pm 1\}^2$\\
			$\fweight{1}=[ 1,0]^t $\\
			$\fweight{2}=[ 1,1]^t $\\
			$\rho_{1}=[1,-1]^t =\rho_{1}^\vee$\\
			$\rho_{2}=[0,2]^t = 2\,\rho_{2}^\vee$\\
			$\highestroot = 2\,\rho_{1}^\vee + 2\,\rho_{2}^\vee$
		\end{minipage}
		\centering
		\caption{The root system $\RootC[2]$ in $\R^2$.}\label{example_rootsystemC2}\label{figC2}
	\end{figure}
\end{minipage}

~\\
~\\
Here, the roots are depicted in green, the base in red and the fundamental weights in blue. The Vorono\"{i} cell of the coroot lattice $\Corootlattice$ is the gray shaded region, we have two squares $(\RootC[2]$ and $\RootB[2])$ and two hexagons $(\RootA[2]$ and $\RootG[2])$. The fundamental domain of the affine Weyl group is the blue shaded simplex.
\end{example}

\subsection{Trigonometric polynomials with Weyl group symmerty}

From now on, $\Roots$ is a root system in $\R^n$ with Weyl group $\weyl$, weight lattice $\Weights = \Z\,\fweight{1} \oplus \ldots \oplus \Z\,\fweight{n}$ and coroot lattice $\Corootlattice=\Weights^*$. For $\weight \in \Weights$, we define the function
\[
\mathfrak{e}^{\weight} :
\begin{array}[t]{ccl}
\R^n & \to & \C, \\
u & \mapsto & \mexp{\weight,u}.
\end{array}
\]
A $\C$--linear combination of these functions is a \textbf{trigonometric polynomial}. The set of all trigonometric polynomials forms an algebra  that we denote by $\C[\Weights]$.

The set $\{\mathfrak{e}^{\weight}\,\vert\,\weight\in\Weights\}$ is closed under multiplication $\mathfrak{e}^{\weight}\,\mathfrak{e}^{\tilde{\weight}} = \mathfrak{e}^{\weight + \tilde{\weight}}$ and thus a group with neutral element $\mathfrak{e}^0$ and inverse $(\mathfrak{e}^{\weight})^{-1}=\mathfrak{e}^{-\weight}$. Since $\Weights$ is the free $\Z$--module generated by the $\fweight{i}$, $\C[\Weights]$ is generated by $\{\mathfrak{e}^{\pm\fweight{1}},\ldots,\mathfrak{e}^{\pm\fweight{n}}\}$.

Since the coroot lattice $\Corootlattice$ is the dual lattice of $\Weights$, any element $f\in\C[\Weights]$ is $\Corootlattice$--periodic, that is, for all $u\in \R^n$ and $\lambda\in\Corootlattice$, we have $f(u+\lambda) = f(u)$.

\subsubsection{Generalized cosines and Chebyshev polynomials}

The Weyl group $\weyl$ acts linearly on $\C[\Weights]$ by the action described on its basis as
\[
	\cdot: \begin{array}[t]{ccl}
	\weyl	\times	\C[\Weights]	&	\to		&	\C[\Weights],\\
	(A,\mathfrak{e}^{\weight})		&	\mapsto	&	\mathfrak{e}^{A\weight}.
	\end{array}
\]
A trigonometric polynomial $f\in\C[\Weights]$ is called \textbf{$\weyl$--invariant}, if, for all $A\in\weyl$, we have $A \cdot f = f$. The \textbf{generalized cosine function} associated to $\weight\in\Weights$ is the $\weyl$--invariant trigonometric polynomial
\begin{equation}\label{eq_gencos}
\gencos{\weight} :
\begin{array}[t]{ccl}
\R^n & \to & \C,\\
u & \mapsto & \dfrac{1}{\nops{\weyl}} \sum\limits_{A\in\weyl} \mathfrak{e}^{A \weight}(u) .
\end{array}
\end{equation}

\begin{theorem}\label{theorem_BourbakiGenerators}
\emph{\cite[Chapitre VI, \S 3, Th\'eor\`eme 1]{bourbaki456}} The following statements hold.
\begin{enumerate}
\item The $\gencos{\fweight{1}}, \ldots, \gencos{\fweight{n}}$ are algebraically independent.
\item The set of $\weyl$--invariants is the polynomial $\C$--algebra generated by the $\gencos{\fweight{1}},\ldots,\gencos{\fweight{n}}$, that is, 
\[
	\C[\Weights]^\weyl=\C[\gencos{\fweight{1}},\ldots,\gencos{\fweight{n}}].
\]
\end{enumerate}
\end{theorem}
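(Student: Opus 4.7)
The plan is to establish part 2 first and then derive part 1 as a consequence. For part 2, I would begin by observing that the orbit sums $\gencos{\weight}$ for $\weight\in \Weights^+$ form a $\C$-basis of $\C[\Weights]^\weyl$: indeed, averaging any $\mathfrak{e}^{\weight}$ over $\weyl$ yields (up to a rational scalar depending on $\mathrm{Stab}_{\weyl}(\weight)$) the generalized cosine attached to the unique dominant representative of the orbit $\weyl\weight$, and distinct dominant weights give linearly independent orbit sums since their supports in $\Weights$ are disjoint.

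The heart of the argument is then to expand, for a dominant weight $\weight=\sum_{i=1}^n a_i\fweight{i}$ with $a_i\in\N$, the product
\[
	\gencos{\fweight{1}}^{a_1}\cdots\gencos{\fweight{n}}^{a_n}
	=	\frac{1}{\nops{\weyl}^{a_1+\cdots+a_n}}\sum \mathfrak{e}^{A_{1,1}\fweight{1}+\cdots+A_{n,a_n}\fweight{n}},
\]
the sum running over tuples $(A_{i,j})\in\weyl^{a_1+\cdots+a_n}$. The classical dominance property for a dominant weight $\fweight{i}$ states that every element of $\weyl\fweight{i}$ is of the form $\fweight{i}-\sum_k m_k\roots_k$ with $m_k\in\N$. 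Summing these relations, any exponent appearing in the expansion is of the shape $\weight-\sum_k m_k\roots_k$ with $m_k\in\N$; after reduction to its dominant representative $\nu$, one obtains $\nu\preceq \weight$. Moreover, the exponent equals $\weight$ exactly when each $A_{i,j}$ fixes $\fweight{i}$, which gives a strictly positive contribution to the coefficient of $\gencos{\weight}$. Thus
\[
	\gencos{\fweight{1}}^{a_1}\cdots\gencos{\fweight{n}}^{a_n}
	=	c_\weight\,\gencos{\weight}+\sum_{\nu\prec\weight,\,\nu\in\Weights^+} c_\nu\,\gencos{\nu},
	\qquad c_\weight>0.
\]
A descending induction on $\weight$ with respect to $\succeq$ (well-founded on $\Weights^+$ below any fixed dominant weight) then expresses every orbit sum as a polynomial in $\gencos{\fweight{1}},\ldots,\gencos{\fweight{n}}$, proving part 2.

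For part 1, the field of fractions of $\C[\Weights]$ has transcendence degree $n$ over $\C$, since $\mathfrak{e}^{\fweight{1}},\ldots,\mathfrak{e}^{\fweight{n}}$ are algebraically independent generators of the group algebra of $\Weights\cong\Z^n$. Because $\weyl$ is finite, $\C[\Weights]$ is integral over its subalgebra of invariants $\C[\Weights]^\weyl$, so the two share the same transcendence degree $n$. By part 2, $\C[\Weights]^\weyl$ is generated as a $\C$-algebra by the $n$ elements $\gencos{\fweight{1}},\ldots,\gencos{\fweight{n}}$, and a generating set of cardinality equal to the transcendence degree must be algebraically independent.

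The main technical obstacle is the dominance lemma used in the central expansion step: verifying that the sum of elements drawn from the Weyl orbits of the $\fweight{i}$ yields, after conjugation back into $\overline{\PC}$, a dominant weight comparable to $\weight$ in the partial order $\succeq$, with equality occurring only for the trivial choice up to stabilizers. This rests on the standard fact that $\weight-A\weight\in\sum_i\N\roots_i$ for any dominant $\weight$ and $A\in\weyl$, which can be proved by induction on the length of $A$ written in simple reflections; everything else is bookkeeping with orbit averages and a clean induction on the well-founded order $\succeq$ restricted to the finite set of dominant weights below $\weight$.
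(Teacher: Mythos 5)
The paper does not actually prove this statement: it is quoted verbatim from Bourbaki (Chapitre VI, \S 3, Th\'eor\`eme 1), so there is no in--paper argument to compare against. Your proof is essentially the standard one (indeed Bourbaki's own): triangularity of the monomials $\gencos{\fweight{1}}^{a_1}\cdots\gencos{\fweight{n}}^{a_n}$ with respect to the dominance order on $\Weights^+$, followed by a transcendence--degree count for algebraic independence. The overall structure is sound, and the ingredients you invoke (orbit sums as a $\C$--basis of the invariants, the fact that $\weight-A\weight$ is a nonnegative integer combination of simple roots for dominant $\weight$, finiteness of the set of dominant weights below a fixed one, integrality of $\C[\Weights]$ over $\C[\Weights]^\weyl$) are all correct and standard.

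One step is justified incorrectly, although the conclusion it is meant to establish is true. You argue that every exponent $\mu$ occurring in the expansion satisfies $\mu\preceq\weight$ and then assert that ``after reduction to its dominant representative $\nu$, one obtains $\nu\preceq\weight$.'' The implication ``$\mu\preceq\lambda$ with $\lambda$ dominant implies that the dominant conjugate of $\mu$ is $\preceq\lambda$'' is false in general: already for $\RootA[1]$, where $\roots_1=2\,\fweight{1}$, the weight $\mu=-3\,\fweight{1}=\fweight{1}-2\,\roots_1$ satisfies $\mu\preceq\fweight{1}$, yet its dominant conjugate $3\,\fweight{1}$ satisfies $3\,\fweight{1}\succ\fweight{1}$. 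The correct way to close this step is to use that the product $\gencos{\fweight{1}}^{a_1}\cdots\gencos{\fweight{n}}^{a_n}$ is itself $\weyl$--invariant, so its multiset of exponents is $\weyl$--stable; consequently every dominant weight $\nu$ whose orbit sum occurs with nonzero coefficient occurs itself as an exponent, that is, $\nu=\sum_{i,j}A_{i,j}\,\fweight{i}=\weight-\sum_k m_k\,\roots_k$ with $m_k\in\N$, which gives $\nu\preceq\weight$ directly. With this one--line repair, the rest of your argument (strict positivity of the leading coefficient via the stabilizers, the well--founded induction on $\succeq$, and the transcendence--degree argument for part 1) goes through.
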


The above \Cref{theorem_BourbakiGenerators} states that, for every $f\in\C[\Weights]^\weyl$, there exists a unique polynomial $g\in\CX:=\C[z_1,\ldots,z_n]$ with the property $f(u) = g(\gencos{}(u))$, where $\gencos{}$ is the function
\[
\gencos{} :
\begin{array}[t]{ccl}
	\R^n & \to & \C^n,\\
	u & \mapsto & \left(\gencos{\fweight{1}}(u),\ldots,\gencos{\fweight{n}}(u)\right) .
\end{array}
\]
This property is exclusive for Weyl groups \cite{farkas86}.

\begin{definition}
The \textbf{generalized Chebyshev polynomials of the first kind} associated to $\weight\in\Weights$ is the unique $T_\weight\in\CX$, such that $T_\weight(\gencos{}(u)) = \gencos{\weight}(u)$. 
\end{definition}

The coefficients of the $T_\weight$ are real. We have $T_0 = 1$, $T_{\fweight{i}}=z_i$ and, for $\weight,\nu\in\Weights$,
\begin{equation}\label{eq_TPolyRecurrence}
\vert\weyl \vert\,T_{\weight}\,T_{\nu}=\sum\limits_{A \in\weyl} T_{\weight + A \nu}.
\end{equation}
The set $\{T_\weight\,\vert\,\weight\in\Weights^+\}$ forms a vector space basis of $\CX$ \cite{lorenz06}.

This definition is a generalization of the univariate Chebyshev polynomials of the first kind $T_\ell(\cos(u)) = \cos(\ell \, u)$ with $\ell \in \Z$, which correspond to the root system $\RootA[1]$. 

\subsubsection{Real cosines and Chebyshev polynomials}

For our approach in \Cref{section_optimization}, we need the generalized Chebyshev polynomials to be defined on a real domain. This is always true for $-I_n\in\weyl$ and what follows is only necessary for $-I_n\notin\weyl$. Let $\weight,\conj{\weight}\in\Weights^+$ with $-\weight\in\weyl\conj{\weight}$. The \textbf{real generalized cosines} associated to the pair $(\weight,\conj{\weight})$ are
\[
	\Re (\gencos{\weight})
=	\frac{\gencos{\weight}+\gencos{\conj{\weight}}}{2}
	\tbox{and}
	\Im (\gencos{\weight})
=	\frac{\gencos{\weight}-\gencos{\conj{\weight}}}{2\mathrm{i}}.
\]
By construction, those are real--valued $\weyl$--invariant trigonometric polynomials. We are interested in the pairs $(\weight,\conj{\weight})$ with $\weight=\fweight{i}$ a fundamental weight. Let $\sigma\in \mathfrak{S}_n$ be the permutation from \Cref{remark_PermutationOrb}. Then $\conj{\weight} = \fweight{\sigma(i)}$ is also a fundamental weight and we define the function
\begin{equation}\label{eq_realgencos}
\gencos{\R}:
\begin{array}[t]{ccl}
\R^n		& \to 		&	\R^{n}, \\
u 		& \mapsto 	&	\left( \gencos{\fweight{1},\R}( u),\ldots, \gencos{\fweight{n},\R}( u)\right),
\end{array}
\end{equation}
where $\gencos{\fweight{i},\R}:=\gencos{\fweight{i}}$ for $i=\sigma(i)$ and $\gencos{\fweight{i},\R} := \Re (\gencos{\fweight{i}}),\gencos{\fweight{\sigma(i)},\R} := \Im (\gencos{\fweight{i}})$ for $i<\sigma(i)$.

\begin{proposition}\label{LemmaComplexConjugatePoly}
Let $\weight,\conj{\weight} \in \Weights$ with $-\weight\in\weyl\conj{\weight}$. Then there exist unique $\TT_\weight , \TT_{\conj{\weight}} \in \RX$, such that
\[
	T_{{\weight}}	(\gencos{}(u))
=	\TT_{{\weight}}	(\gencos{\R}(u)) + \mathrm{i} \, \TT_{\conj{\weight}}	(\gencos{\R}(u))
\tbox{and}
	T_{\conj{\weight}}	(\gencos{}(u))
=	\TT_{{\weight}}	(\gencos{\R}(u)) - \mathrm{i}\, \TT_{\conj{\weight}}	(\gencos{\R}(u)) .
\]
\end{proposition}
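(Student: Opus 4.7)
The plan is to reduce to \Cref{theorem_BourbakiGenerators} by passing through complex conjugation. First, since $\overline{\mathfrak{e}^{\weight}} = \mathfrak{e}^{-\weight}$ and $\weyl\,(-\weight) = \weyl\,\conj{\weight}$ by hypothesis, one has $\overline{\gencos{\weight}(u)} = \gencos{\conj{\weight}}(u)$ pointwise. Consequently $\Re(\gencos{\weight}) = (\gencos{\weight}+\gencos{\conj{\weight}})/2$ and $\Im(\gencos{\weight}) = (\gencos{\weight}-\gencos{\conj{\weight}})/(2\mathrm{i})$ are real-valued $\weyl$-invariant trigonometric polynomials, and
\[
\gencos{\weight} = \Re(\gencos{\weight}) + \mathrm{i}\,\Im(\gencos{\weight}), \qquad \gencos{\conj{\weight}} = \Re(\gencos{\weight}) - \mathrm{i}\,\Im(\gencos{\weight}).
\]

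Next I would establish a real analogue of \Cref{theorem_BourbakiGenerators}, namely that the real-valued $\weyl$-invariants coincide with $\R[\gencos{\fweight{1},\R},\ldots,\gencos{\fweight{n},\R}]$ and that these generators are algebraically independent over $\R$. By \Cref{remark_PermutationOrb} and \eqref{eq_realgencos}, for each pair $i<\sigma(i)$ the substitutions $\gencos{\fweight{i}} = \gencos{\fweight{i},\R} + \mathrm{i}\,\gencos{\fweight{\sigma(i)},\R}$ and $\gencos{\fweight{\sigma(i)}} = \gencos{\fweight{i},\R} - \mathrm{i}\,\gencos{\fweight{\sigma(i)},\R}$ form an invertible $\C$-linear change of generators (while $\gencos{\fweight{i}} = \gencos{\fweight{i},\R}$ when $i=\sigma(i)$), so algebraic independence transfers from \Cref{theorem_BourbakiGenerators}(1). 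Complex conjugation acts on $\C[\gencos{\fweight{1}},\ldots,\gencos{\fweight{n}}]$ by permuting the $\gencos{\fweight{i}}$ via $\sigma$ and fixes exactly those polynomials whose expression in the $\gencos{\fweight{i},\R}$ has real coefficients. Hence a polynomial in $\gencos{\fweight{1}},\ldots,\gencos{\fweight{n}}$ defines a real-valued function on $\R^n$ if and only if it admits a real representation in the $\gencos{\fweight{i},\R}$.

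Applying this real analogue to $\Re(\gencos{\weight})$ and $\Im(\gencos{\weight})$ yields unique $\TT_\weight, \TT_{\conj{\weight}} \in \RX$ with $\Re(\gencos{\weight})(u) = \TT_\weight(\gencos{\R}(u))$ and $\Im(\gencos{\weight})(u) = \TT_{\conj{\weight}}(\gencos{\R}(u))$. Since $T_{\weight}(\gencos{}(u)) = \gencos{\weight}(u)$ and $T_{\conj{\weight}}(\gencos{}(u)) = \gencos{\conj{\weight}}(u)$ by definition of the generalized Chebyshev polynomials, substituting the decomposition above yields both displayed identities, and uniqueness follows from the algebraic independence of the real fundamental cosines.

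The only nontrivial ingredient is the real analogue of \Cref{theorem_BourbakiGenerators}; but once one identifies complex conjugation on the $\gencos{\fweight{i}}$ with the permutation $\sigma$ of \Cref{remark_PermutationOrb}, this becomes a routine descent argument across the invertible $\C$-linear change of generators, so there is no genuine obstacle beyond carefully setting up the bookkeeping between the two generating sets.
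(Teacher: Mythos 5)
Your proposal is correct and follows essentially the same route as the paper: the paper decomposes $T_{\weight}$ into the parts $(T_{\weight}+T_{\conj{\weight}})/2$ and $(T_{\weight}-T_{\conj{\weight}})/(2\mathrm{i})$, which are respectively invariant and anti--invariant under $\{\pm I_n\}$, and expresses these via the generators $\gencos{\fweight{i}}\pm\gencos{\fweight{\sigma(i)}}$ --- the same linear change of generators you perform, since the $\{\pm I_n\}$--action on the argument coincides with complex conjugation of values here. The only minor divergence is in the uniqueness step, where the paper argues via analyticity and the nonempty interior of the image of the real cosines, whereas you invoke algebraic independence of the real generators; both are valid.
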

\begin{proof}
Note that
\[
	(T_\weight + T_{\conj{\weight}}) (\gencos{}(u))
=	\frac{1}{\nops{\weyl \weight}} \sum\limits_{\tilde{\weight} \in \weyl \weight} \mathfrak{e}^{\tilde{\weight}}(u) + \mathfrak{e}^{-\tilde{\weight}}(u)
\]
is invariant under both $\weyl$ and $\{\pm I_n\}$. Let $\sigma\in \mathfrak{S}_n$ be the permutation from \Cref{remark_PermutationOrb}. Then the $\C$--algebra $(\C[\Weights]^{\weyl})^{\{\pm I_n\}}$ is generated by the $ \gencos{\fweight{i}}+\gencos{\fweight{\sigma(i)}} $ with $ 1 \leq i \leq \sigma(i) \leq n $. Thus, $(T_\weight + T_{\conj{\weight}})(\gencos{}(u))/2$ can be written as a polynomial $\TT_\weight$ in $\gencos{\R}(u)$. Similarly,
\[
	(T_\weight - T_{\conj{\weight}}) (\gencos{}(u))
=	\frac{1}{\nops{\weyl\weight}} \sum\limits_{\tilde{\weight} \in \weyl\weight} \mathfrak{e}^{\tilde{\weight}}(u) - \mathfrak{e}^{-\tilde{\weight}}(u)
\]
is invariant under $\weyl$, but anti--invariant under $\{\pm I_n\}$. The elements of $\C[\Weights]^{\weyl}$, which are anti--invariant under $\{\pm I_n\}$, are, as an $\C$--algebra, generated by the $ \gencos{\fweight{i}}-\gencos{\fweight{\sigma(i)}}$ with $ 1\leq \sigma(i) < i \leq n $. Hence, $(T_\weight - T_{\conj{\weight}})(\gencos{}(u))/(2\mathrm{i})$ can be written as a polynomial $\TT_{\conj{\weight}}$ in $\gencos{\R}(u)$. As polynomials, $\TT_{\weight}$ and $\TT_{\conj{\weight}}$ are analytical functions and $\Image_\R$ has nonempty interior. Hence, they are unique.
\end{proof}

\begin{convention}\label{convention_real}
From now on, we will write $T_\weight$ and $\gencos{}$ for $\TT_\weight$ and $\gencos{\R}$, even if $-I_n\notin\weyl$. As we have shown above, the reformulation follows from a permutation $\sigma$ and a substitution $z_i \mapsto z_i \pm \mathrm{i}\,z_{\sigma(i)}$. For our implementation, it is important to remember this caveat, but for the article itself, we shall simplify the notation.
\end{convention}

\subsection{The image of the generalized cosines as a basic semi--algebraic set}

We call $\Image:=\gencos{}(\R^n)$ the \textbf{image of the generalized cosines}. If $\fundom$ is a fundamental domain for the affine Weyl group $\weyl\ltimes\Corootlattice$, then $\Image=\gencos{}(\fundom)$ due to the $\weyl$--invariance and $\Corootlattice$--periodicity. In particular, $\Image$ is compact. With \Cref{convention_real}, $\Image$ is a real set and contained in the cube $[-1,1]^n$.

For the purpose of optimization, we need a polynomial description of $\Image$ as a basic semi--algebraic set. Recently, a closed formula was given via a polynomial matrix inequality. This formula is available in the standard monomial basis $z$ \cite{chromaticissac22,TOrbits}, and in the basis of generalized Chebyshev polynomials $T_{\weight}$ \cite{TobiasThesis}.

\begin{theorem}\label{thm_HermiteCharacterization}
\emph{\cite[Theorem 2.19]{TobiasThesis}} Let $\Roots$ be a root system of type $\RootA$, $\RootB$, $\RootC$, $\RootD$ or $\RootG[n-1]$ and define the symmetric matrix polynomial $\posmat\in\RX^{n\times n}$ via
\begin{align*}
	2^{i+j}\,\posmat(z)_{ij}
=&	- T_{(i+j)\, \fweight{1}}(z) + \sum\limits_{\ell=1}^{\lceil (i+j)/2 \rceil -1} \left( 4 \binom{i+j-2}{\ell-1} - \binom{i+j}{\ell} \right) T_{(i+j-2\,\ell)\, \fweight{1}}(z) \\
&+	\frac{1}{2} \begin{cases}
	4\binom{i+j-2}{(i+j)/2-1} - 	\binom{i+j}{(i+j)/2}	,&	\tbox{if} i+j \tbox{is even}	\\
	0														,&	\tbox{if} i+j \tbox{is odd}
	\end{cases}.
\end{align*}
Then $\Image = \{z\in \R^n \,\vert \, \posmat(z)\succeq 0 \}$.
\end{theorem}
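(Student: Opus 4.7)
The plan is to recognise $\posmat(z)$ as a localised Hankel moment matrix associated with the $\weyl$--orbit of $\fweight{1}$, and then to read off the semi--algebraic description from the truncated moment problem.

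First I would unravel the combinatorics of the entries. Writing $k=i+j$ and expanding via the binomial formula, the coefficient of $t^{k-2\ell}$ in $4(t+t^{-1})^{k-2}-(t+t^{-1})^{k}$ is exactly $4\binom{k-2}{\ell-1}-\binom{k}{\ell}$, and the self--paired middle term $\ell=k/2$ (when $k$ is even) contributes an extra factor of $1/2$. Since
\[
\left(4-(t+t^{-1})^{2}\right)(t+t^{-1})^{k-2}=4(t+t^{-1})^{k-2}-(t+t^{-1})^{k},
\]
this identifies the explicit formula in the statement (after folding using $T_{-m\fweight{1}}=T_{m\fweight{1}}$, granted by \Cref{convention_real}) as
\[
2^{i+j}\,\posmat(\gencos{}(u))_{ij} = \frac{1}{\nops{\weyl\fweight{1}}}\sum_{A} \left(4-(\xi_A+\xi_A^{-1})^2\right)(\xi_A+\xi_A^{-1})^{i+j-2},
\]
with $\xi_A:=\mathfrak{e}^{A\fweight{1}}(u)$ and $A$ ranging over $\weyl/\stab{\fweight{1}}$, using the moment identity $T_{m\fweight{1}}(\gencos{}(u))=\frac{1}{\nops{\weyl\fweight{1}}}\sum_{A} \xi_A^{m}$.

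Setting $y_A:=(\xi_A+\xi_A^{-1})/2=\cos(2\pi\sprod{A\fweight{1},u})\in[-1,1]$ then exhibits the right hand side, up to a positive constant, as the Hankel matrix of the weighted atomic measure $\mu_u:=\sum_A (1-y_A^2)\,\delta_{y_A}$ on $\R$. The inclusion $\Image\subseteq\{z\,\vert\,\posmat(z)\succeq 0\}$ is then immediate: for $u\in\R^n$ one has $1-y_A^2\geq 0$, so $\posmat(\gencos{}(u))$ decomposes as a sum of rank one positive semi--definite matrices $(1-y_A^2)\,v_A v_A^{\top}$, with Vandermonde--type vectors $v_A=(1,y_A,y_A^2,\ldots)^{\top}$.

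For the converse I would invoke a truncated moment / flat extension argument. The size of $\posmat$ is calibrated so that, whenever $\posmat(z)\succeq 0$, its entries prescribe an atomic representing measure supported on at most $\nops{\weyl\fweight{1}}$ points in $[-1,1]$, the localiser $1-y^2$ forcing the atoms into the desired interval. Lifting $y_A=\cos(2\pi\theta_A)$ yields candidate unit--modulus complex numbers $\xi_A$, and one must show that these assemble into a single $\weyl$--orbit $\weyl u$ for some $u\in\R^n$, uniformly across types $\RootA$, $\RootB$, $\RootC$, $\RootD$ and $\RootG[n-1]$. This is the main obstacle of the proof; it can be resolved by observing that the chain $\{T_{k\fweight{1}}\}_{k\in\N}$ generates the full algebra of $\weyl$--invariants on the orbit of $\fweight{1}$ for precisely these types, which is also the reason why only the polynomials $T_{k\fweight{1}}$ appear in $\posmat$. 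I expect the closing step to reduce, via the explicit descriptions of the orbits recalled in \Cref{Appendix_IrredRootSys}, to a type--by--type check that reconstruction from the $\fweight{1}$--chain is unique modulo $\weyl$, thereby producing a genuine preimage $u$.
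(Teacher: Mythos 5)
First, a framing remark: the paper does not prove this theorem. It is imported from \cite{chromaticissac22,TOrbits,TobiasThesis} with a bare citation, so there is no internal proof to compare against and your proposal must stand on its own.

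Your structural identification is correct and is essentially the mechanism of the cited proof: with $y_A=\cos(2\pi\sprod{A\,\fweight{1},u})$, the matrix $\posmat(\gencos{}(u))$ is, up to an overall positive constant, the Hankel matrix of the atoms $y_A$ localized at $1-y^2$, i.e.\ $\sum_A(1-y_A^2)\,v_A v_A^{t}$ with $v_A=(1,y_A,\ldots,y_A^{n-1})^{t}$. (Your displayed identity is off by a factor $2$ coming from the folding $\xi_A^{m}+\xi_A^{-m}$; this is harmless.) The forward inclusion $\Image\subseteq\{\posmat\succeq 0\}$ follows exactly as you say. The genuine gap is in the converse. A single positive semi--definite $n\times n$ \emph{localized} Hankel block does not yield a representing measure: the truncated moment problem on $[-1,1]$ needs the unlocalized Hankel matrix to be positive semi--definite as well, or a flat--extension rank condition, and neither is imposed by $\posmat(z)\succeq 0$. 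What actually closes the argument is that the putative moments are not free parameters: they are the Newton power sums of the roots of an \emph{explicit} polynomial $p_z$ whose coefficients are (normalized) coordinates of $z$, so $\posmat(z)$ is the Hermite quadratic form of $p_z$ localized at $g=1-y^2$. The classical rank/signature theorem for Hermite forms (rank counts distinct roots with $g\neq 0$, signature counts real roots signed by $g$) then shows that positive semi--definiteness of this single localized form already forces every root of $p_z$ to be real and to lie in $[-1,1]$ --- no existence theorem for representing measures, and no flat extension, is needed. Your moment--problem step should be replaced by this argument; as written it does not go through.

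The second issue is the ``assembly into a single $\weyl$--orbit'', which you correctly flag as the main obstacle but leave essentially open. It is more than a formality: one must check, type by type, that \emph{all} coordinates $z_2,\ldots,z_n$ --- not just the chain $T_{k\fweight{1}}(z)$ --- are recovered from the multiset of roots of $p_z$. For $\RootC$ this is immediate ($z_i$ is a normalized elementary symmetric function of the $y_A$), but for $\RootB$ and $\RootD$ the spin weights $\fweight{n}$ (and $\fweight{n-1}$) enter the $T_{k\fweight{1}}$ only through squares, so a sign ambiguity must be resolved by exhibiting a preimage $u$ for either sign (a translation by a vector outside $\Corootlattice$ does the job); for $\RootA$ the reconstructed unit--modulus numbers must additionally be normalized to have product $1$. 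These verifications are precisely where the restriction to types $\RootA$, $\RootB$, $\RootC$, $\RootD$, $\RootG[n-1]$ is consumed, and without them the converse inclusion is not established.
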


The matrix polynomial $\posmat\in\RX^{n\times n}$ from \Cref{thm_HermiteCharacterization} follows the pattern
\[
\begin{bmatrix}
	\frac{T_{0}-T_{2\,\fweight{1}}}{4}& 
	\frac{T_{\fweight{1}} -T_{3\,\fweight{1}}}{8}& 
	\frac{T_{0}- T_{4\,\fweight{1}}}{16}&
	\frac{2\,T_{\fweight{1}}- T_{3\,\fweight{1}} - T_{5\,\fweight{1}}}{32}&
	\cdots\\
	
	\frac{T_{\fweight{1}} -T_{3\,\fweight{1}}}{8}& 
	\frac{T_{0}- T_{4\,\fweight{1}}}{16}&
	\frac{2\, T_{\fweight{1}}- T_{3\,\fweight{1}} - T_{5\,\fweight{1}}}{32}&
	\frac{2\, T_{0} +  T_{2\,\fweight{1}} - 2\, T_{4\,\fweight{1}} -  T_{6\,\fweight{1}}}{64}&
	\cdots\\
	
	\frac{T_{0}- T_{4\,\fweight{1}}}{16}& 
	\frac{2\,T_{\fweight{1}}- T_{3\,\fweight{1}} - T_{5\,\fweight{1}}}{32}& 
	\frac{2 \,T_{0} +  T_{2\,\fweight{1}}-2\, T_{4\,\fweight{1}} -  T_{6\,\fweight{1}}}{64}&
	\frac{5 \,T_{\fweight{1}} - T_{3\,\fweight{1}} - 3 \,T_{5\,\fweight{1}} - T_{7\,\fweight{1}}}{128}&
	\cdots\\
	
	\frac{2\,T_{\fweight{1}}- T_{3\,\fweight{1}} - T_{5\,\fweight{1}}}{32}&
	\frac{2 \,T_{0} +  T_{2\,\fweight{1}}-2\, T_{4\,\fweight{1}} -  T_{6\,\fweight{1}}}{64}&
	\frac{5 \,T_{\fweight{1}} - T_{3\,\fweight{1}} - 3\, T_{5\,\fweight{1}} - T_{7\,\fweight{1}}}{128}&
	\frac{5 \,T_{0} + 4 \,T_{2\,\fweight{1}} - 4 \,T_{4\,\fweight{1}} - 4\,T_{6\,\fweight{1}} - T_{8\,\fweight{1}}}{256}&
	\cdots\\
	
	\vdots & \vdots & \vdots & \vdots & \ddots
\end{bmatrix}.
\]
\begin{figure}[H]
	\begin{center}
		\begin{subfigure}{.2\textwidth}
			\centering
			\includegraphics[width=\textwidth]{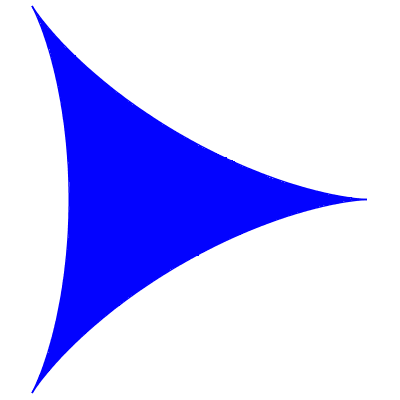}
			\caption{$\RootA[2]$}
		\end{subfigure}\quad
		\begin{subfigure}{.2\textwidth}
			\centering
			\includegraphics[width=\textwidth]{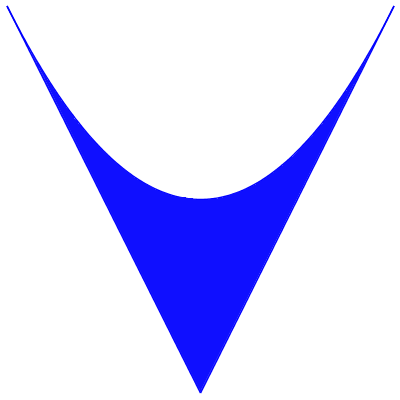}
			\caption{$\RootC[2]$}
		\end{subfigure}\quad
		\begin{subfigure}{.2\textwidth}
			\centering
			\includegraphics[width=\textwidth]{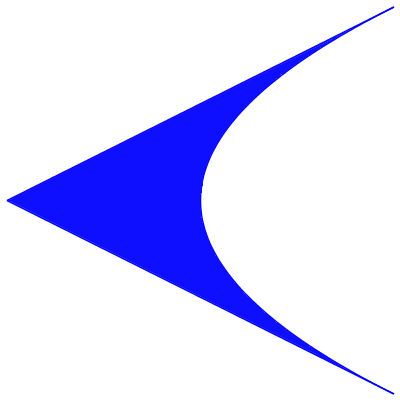}
			\caption{$\RootB[2]$}
		\end{subfigure}\quad
		\begin{subfigure}{.2\textwidth}
			\centering
			\includegraphics[width=\textwidth]{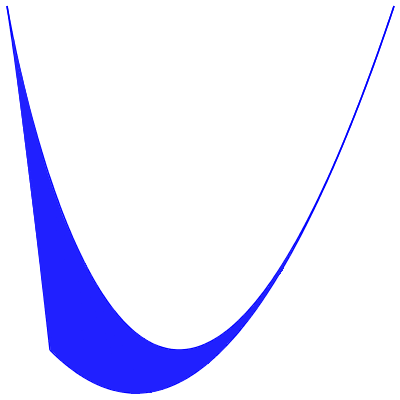}
			\caption{$\RootG[2]$}
		\end{subfigure}\quad
		\begin{subfigure}{.3\textwidth}
			\centering
			\includegraphics[width=\textwidth]{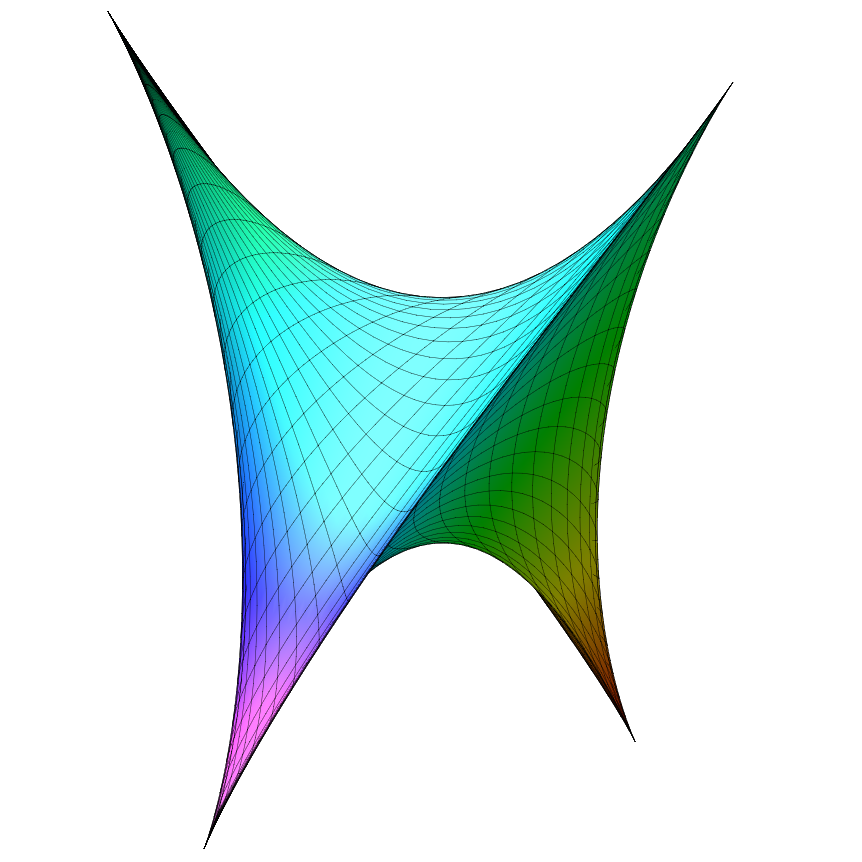}
			\caption{$\RootA[3]$}
		\end{subfigure}\quad
		\begin{subfigure}{.3\textwidth}
			\centering
			\includegraphics[width=\textwidth]{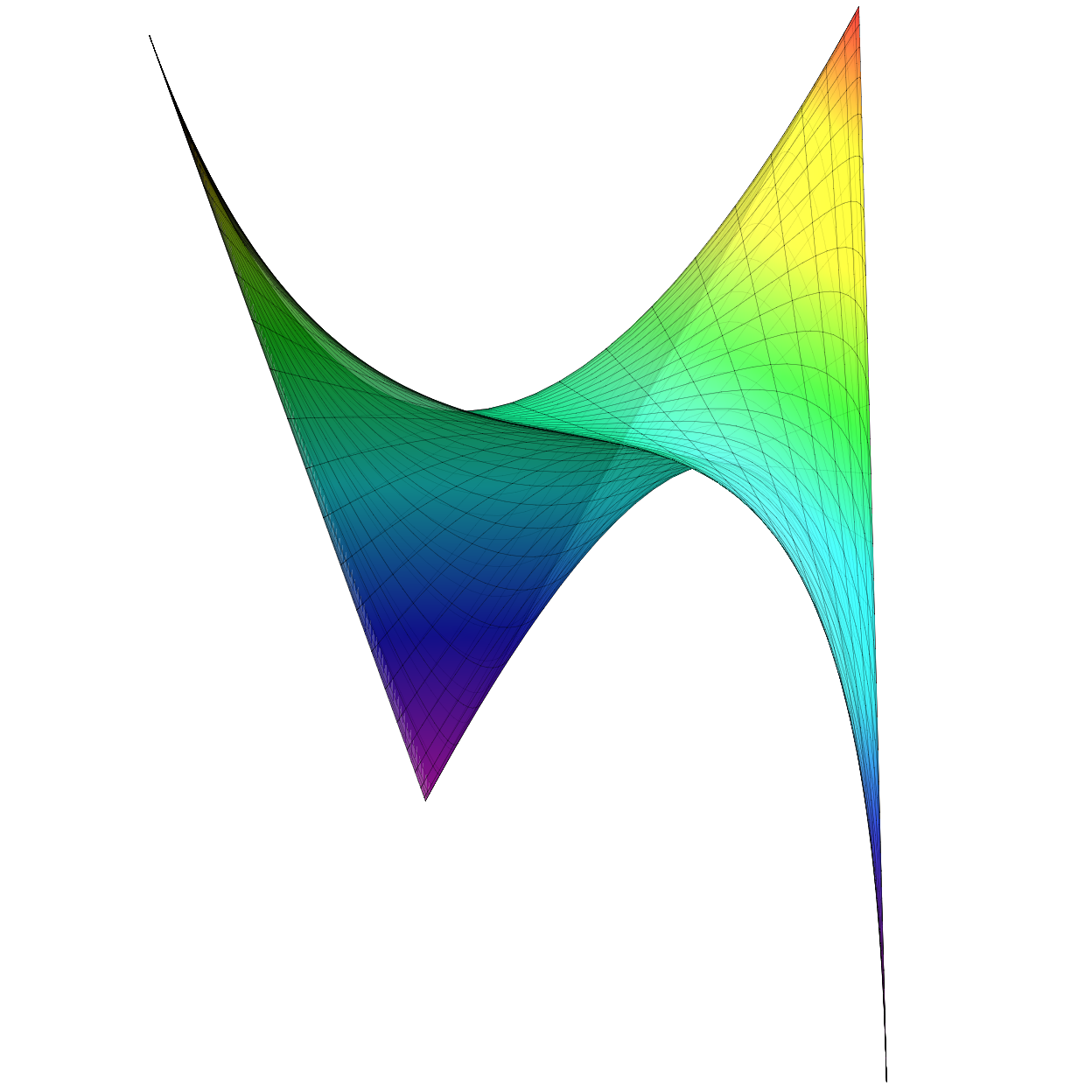}
			\caption{$\RootC[3]$}
		\end{subfigure}\quad
		\begin{subfigure}{.3\textwidth}
			\centering
			\includegraphics[width=\textwidth]{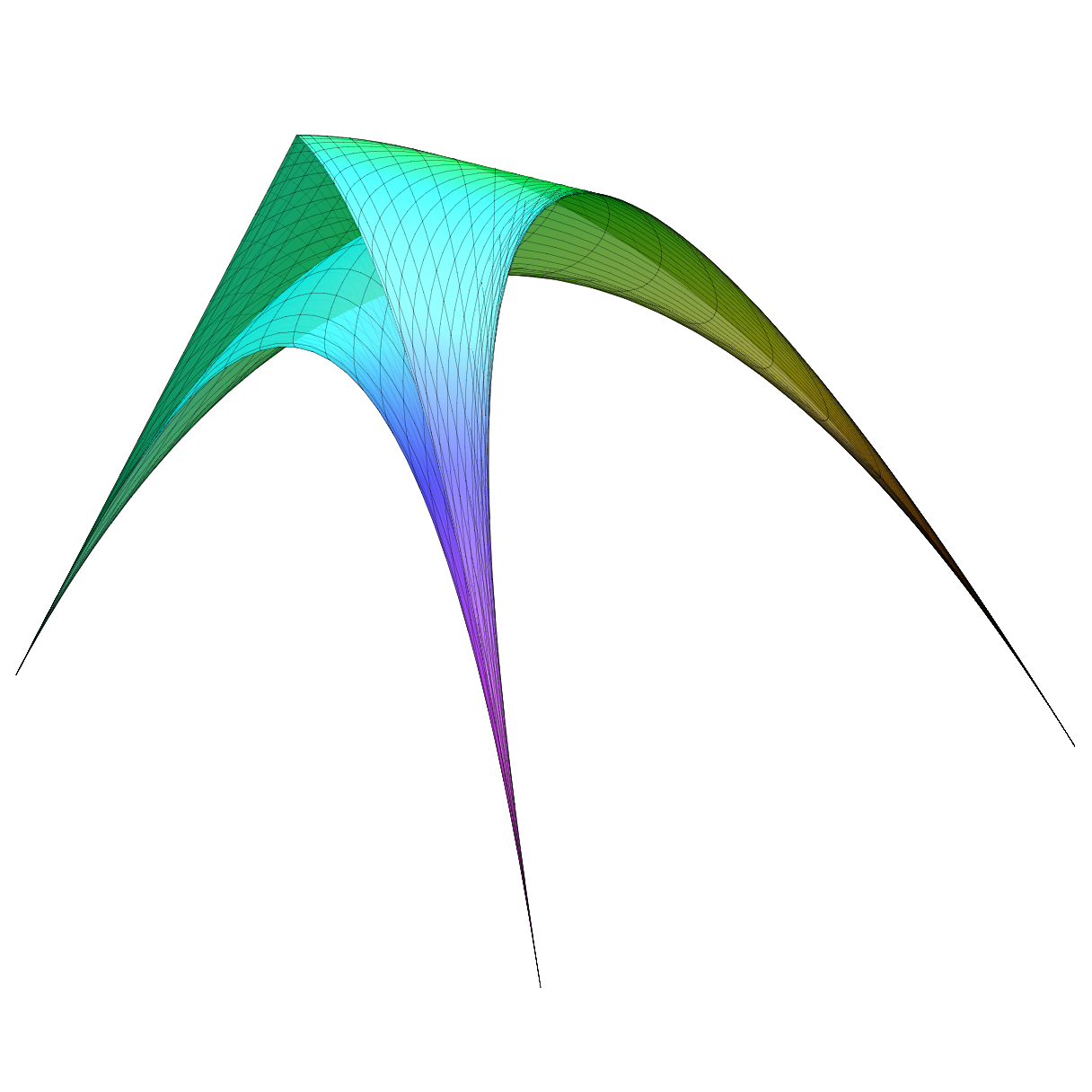}
			\caption{$\RootB[3]$}
		\end{subfigure}
		\caption{The image of the generalized cosines for the irreducible root systems of rank $2$ and $3$.}
		\label{fig_OrbitSpaceIntro}
	\end{center}
\end{figure}

\begin{remark}~
\begin{enumerate}
\item If we are in one of the special cases $\RootE[6,7,8]$ or $\RootF[4]$, then such a polynomial description of $\Image$ can also be obtained with \emph{\cite[\S 4]{procesischwarz85}}. In this case, one obtains a Gram matrix of differentials and has to rewrite the entries in the coordinates $z$ of $\Image$.

\item The root system may not be irreducible, that is, $\Roots=\Roots^{(1)} \cup \ldots \cup \Roots^{(k)}$ for some $k\in \N$.
Hence, we can write the fundamental domain of the affine Weyl group as $\fundom = \fundom^{(1)} \times \ldots \times \fundom^{(k)}$ and thus $\Image=\gencos{\fundom}$ is the positivity locus of a block--diagonal matrix polynomial
\[
\posmat(z^{(1)},\ldots,z^{(k)})
=	\diag(\posmat^{(1)}(z^{(1)}),\ldots,\posmat^{(k)}(z^{(k)})),
\]
where the $\posmat^{(i)}$ are matrix polynomials corresponding to the irreducible $\Roots^{(i)}$.

As an example, take $k$ orthogonal copies of $\RootA[1]$.
Then $\Image=[-1,1]^k$ is the positivity locus of the matrix polynomial $\posmat=\diag(1-z_1^2,\ldots,1-z_k^2)$.
\end{enumerate}
\end{remark}

\subsection{Optimizing trigonometric polynomials with crystallographic symmetry}

We now address the trigonometric optimization problem from \Cref{OptiProblemExpo}. With the theory that was presented in the previous subsections, we can rewrite the objective function uniquely in terms of generalized Chebyshev polynomials using \Cref{theorem_BourbakiGenerators}. Indeed, with the generalized cosines from \Cref{eq_gencos} we can write any $f\in\C[\Weights]^\weyl$ uniquely as
\[
f
=	\sum\limits_{\weight\in S} c_\weight\,\gencos{\weight}
\]
for some finite set $S\subseteq\Weights^+$ of dominant weights. If $c_{\weight} = \overline{c_{\conj{\weight}}} \in \R$ whenever $-\weight\in\weyl\conj{\weight}$, then $f$ takes only real values and
\begin{equation}\label{coro_OptiProbelmRewrite}
	f^*
:=	\min\limits_{u \in \R^n} f(u)
=	\min\limits_{z\in\Image} \sum\limits_{\weight\in S} c_\weight \, T_\weight(z)
\end{equation}
is the global minimum of $f$ on $\R^n$. This transforms the region of optimization from $\R^n$ into the image $\Image$ of the generalized cosines. Thanks to \Cref{thm_HermiteCharacterization}, we can describe the latter explicitly as a compact basic semi--algebraic set with the Chebyshev basis. This makes it possible to solve the problem numerically with techniques from classical polynomial optimization, which is subject to \Cref{section_optimization}.

\begin{example}\label{example_A2PolyRewrite}
The symmetric group $\mathfrak{S}_3$ acts on $\R^3/\langle [1,1,1]^t\rangle$ by permutation of coordinates and leaves the lattice $\Weights:=\Z\,\fweight{1} + \Z\,\fweight{2}:=\Z\,[0,-1,-1]^t + \Z\,[-1,-1,2]^t$ invariant. This is the weight lattice of the root system $\RootG[2]$ with Weyl group $\weyl := \mathfrak{S}_3 \times \{\pm1\}$. We consider the $\weyl$--invariant trigonometric polynomial
\begin{align*}
	f(u)
:=&	\,\gencos{\textcolor{red}{2\,\fweight{1}}}(u)+2\,\gencos{\textcolor{blue}{\fweight{2}}}(u)\\
=&	\,(
	\mcos{\sprod{2\,\fweight{1},u}} + 
	\mcos{\sprod{2\,\fweight{1} - 2\,\fweight{2},u}} + 
	\mcos{\sprod{4\,\fweight{1} - 2\,\fweight{2},u}} \\
&	\,
	+ 2\,\mcos{\sprod{\fweight{2},u}} + 
	2\,\mcos{\sprod{3\,\fweight{1} - \fweight{2},u}} + 
	2\,\mcos{\sprod{3\,\fweight{1} - 2\,\fweight{2},u}} 
	)/3
\end{align*}
with $u = (u_1,u_2,-u_1-u_2)\in \R^3/\langle [1,1,1]^t\rangle$. In the coordinates $z=\gencos{}(u)=(\gencos{\fweight{1}}(u),\gencos{\fweight{2}}(u))\in\Image$, we have
\[
	f(z)
=	T_{\textcolor{red}{2\,\fweight{1}}}(z)+2\,T_{\textcolor{blue}{\fweight{2}}}(z)
=	(6\,z_1^2 - 2\,z_1 - 2\,z_2 - 1) + 2 \, (z_2)
=	6\,z_1^2 - 2\,z_1 - 1.
\]
This univariate polynomial is minimal in $z_1=1/6$ and $z=(1/6,z_2)\in\Image$ if and only if $z_2\in [-11/24,-1/3]$. Hence, the minimum of $f$ is
\[
	f^*
=	\min\limits_{u\in\R^2} f(u)
=	\min_{z\in \Image} 6\,z_1^2 - 2\,z_1 - 1
=	-\frac{7}{6}.
\]
\begin{figure}[H]
\begin{center}
	\begin{overpic}[height=4cm,grid=false,tics=10]{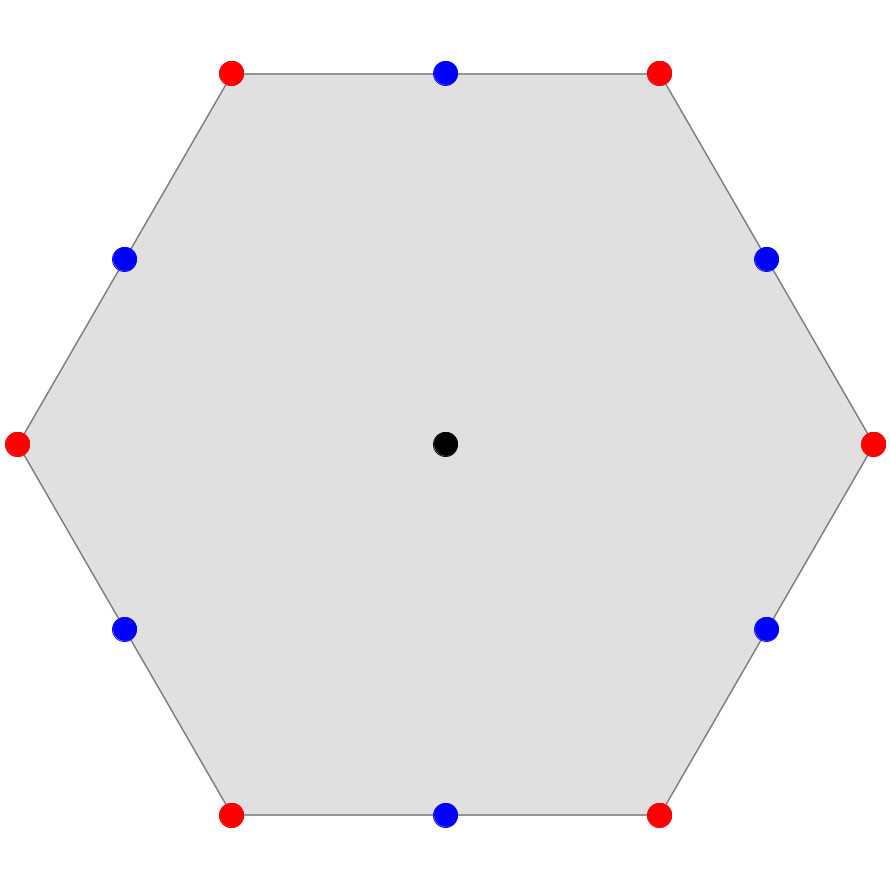}
	\put ( 83, 87) {\large $\displaystyle \textcolor{red}{2\,\fweight{1}}$}
	\put ( 53, 97) {\large $\displaystyle \textcolor{blue}{\fweight{2}}$}
	\end{overpic}
	\hspace{1cm}
	\includegraphics[height=4cm]{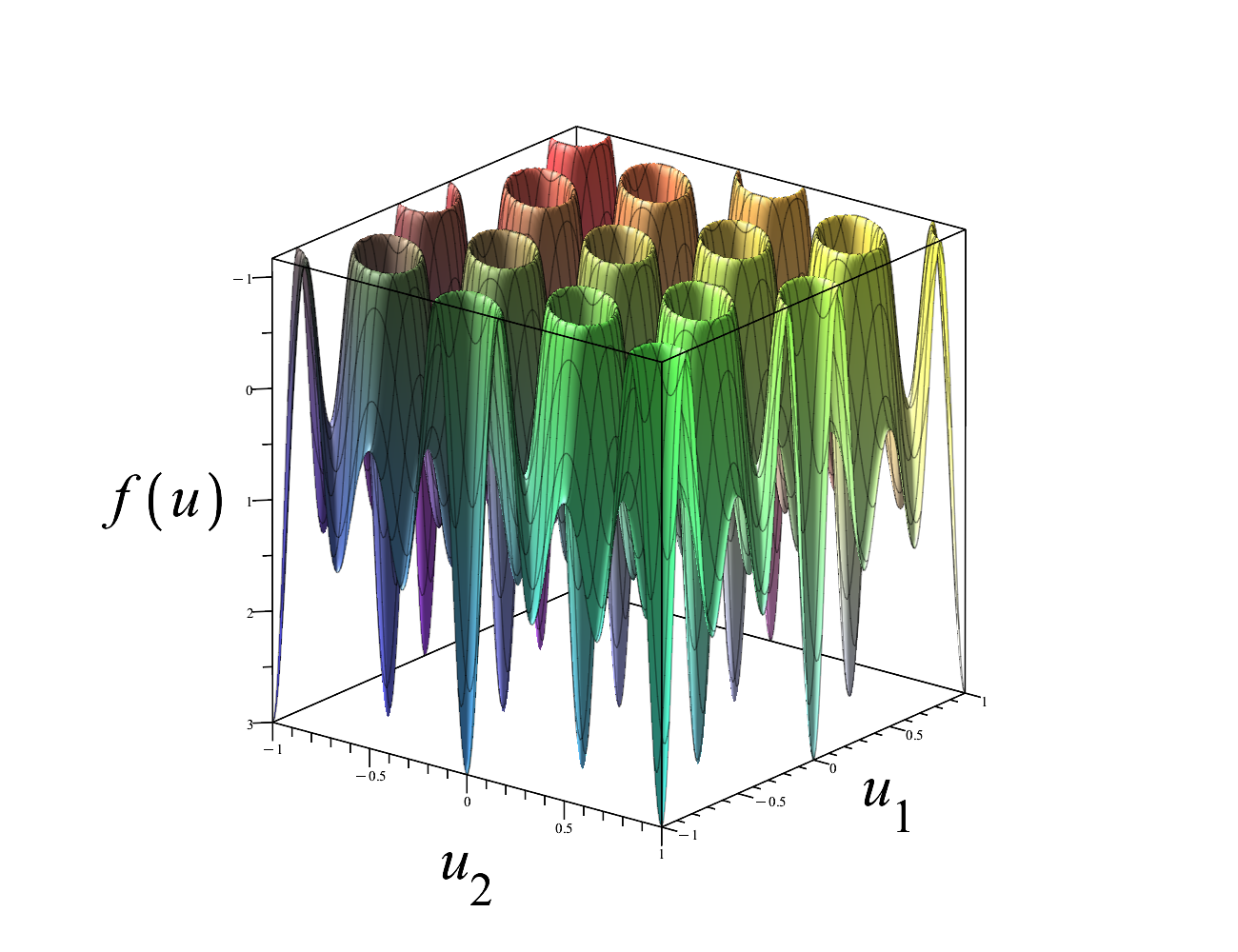}
	\hspace{1cm}
	\includegraphics[height=4cm]{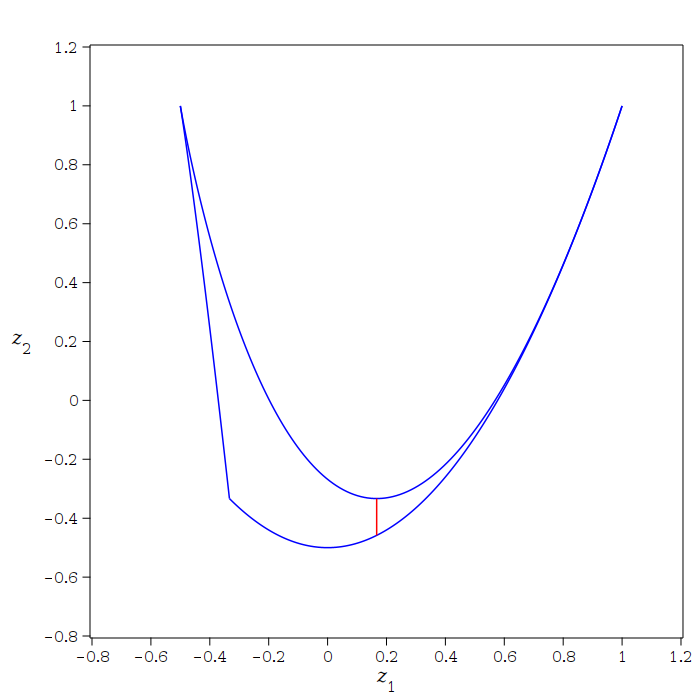}
\caption{The support of $f$ as a trigonometric polynomial on the left consists of the $\weyl$--orbits of $\textcolor{red}{2\,\fweight{1}}$ and $\textcolor{blue}{\fweight{2}}$. The graph of this $\weyl$--invariant periodic function is depicted in the middle. The image of the generalized cosines \textcolor{blue}{$\Image$} on the right is the new feasible region of the polynomial optimization problem and the set of minimizers for $f$ is \textcolor{red}{$\{1/6\} \times [-11/24,-1/3]$}.}
\label{A2level2}
\end{center}
\end{figure}
\end{example}

\section{Optimization in terms of generalized Chebyshev polynomials}
\label{section_optimization}
\setcounter{equation}{0}

In the previous section, we have shown that the trigonometric optimization problem with crystallographic symmetry from \Cref{OptiProblemExpo} is equivalent to optimizing a linear combination of generalized Chebyshev polynomials
\begin{equation}\label{eq_ObjectivePoly}
	f(z)
=	\sum\limits_{\weight\in S} c_\weight\,T_\weight(z)
\in	\RX
\end{equation}
with $S\subseteq \Weights^+$ finite and $c_\weight\in\R$. Here, $\Image$ is the image of the generalized cosines, a compact basic semi--algebraic set that can be represented as
\[
	\Image
=	\{ \gencos{}(u) \,\vert\, u\in\R^n \}
=	\{ z\in\R^n \,\vert\, \posmat(z) \succeq 0 \},
\]
where $\posmat\in\RX^{n\times n}$ is a symmetric matrix polynomial, for example given by \Cref{thm_HermiteCharacterization}. In the present section, we show how to solve this new polynomial optimization problem
\begin{equation}\label{eq_polyoptprob}
	f^*
	=	\min\limits_{z\in\Image} f(z)
	=		\begin{array}[t]{rl}
		\min		&	f(z) \\
		\mbox{s.t.}	&	z\in \R^n,\,\posmat(z)\succeq 0
	\end{array}
\end{equation}
numerically. We do this by adapting Lasserre's hierarchy. The novelty lies in exploiting the representation of the objective function in terms of generalized Chebyshev polynomials, which leads to a new notion of the hierarchy order.

\subsection{Matrix version of Putinar's theorem}

In \cite{lasserre01}, Lasserre proposes a hierarchy of dual moment relaxations and sums of squares (SOS) reinforcements based on Putinar's Positivstellensatz \cite{putinar93} to solve such problems, when the polynomial matrix inequality $\posmat(z)\succeq 0$ (PMI) is replaced by finitely many scalar constraints. In principle, our problem falls in this setting. Indeed, the PMI can be rewritten to scalar inequalities by taking the coefficients of the characteristic polynomial and using Descartes' rule of signs \cite[Theorem 2.33]{roy13}. We would prefer to avoid such an approach, since the degrees of the so obtained scalar constraints are generically much larger than the entries of the matrix polynomial $\posmat$.

To overcome this degree problem, Henrion and Lasserre \cite{henrion06} suggest using another Positivstellensatz due to Hol and Scherer, see \Cref{thm_holscherer}, and propose a hierarchy of dual moment relaxations and matrix SOS reinforcements, that benefits from the matrix structure.

\subsubsection{Matrix SOS reinforcement}

A matrix polynomial $\mathbf{Q}\in\RX^{n\times n}$ is said to be a \textbf{sum of squares}, if there exist $k\in\N$ and $\mathbf{Q}_1,\ldots,\mathbf{Q}_k\in \RX^n$, such that
\[
	\mathbf{Q}(z)=\sum\limits_{i=1}^k \mathbf{Q}_i(z)\,\mathbf{Q}_i(z)^t.
\]
We write $\mathbf{Q}\in\mathrm{SOS}(\RX^n)$ and denote by
\[
	\mathrm{QM}(\posmat)
:=	\{ q+\trace(\posmat\,\mathbf{Q})\,\vert\, q \in \mathrm{SOS}(\RX),\,\mathbf{Q} \in \mathrm{SOS}(\RX^n) \}
\]
the quadratic module of $\posmat$. Then every element of $\mathrm{QM}(\posmat)$ is nonnegative on $\Image$ and enforcing this constraint gives a lower bound
\begin{equation}\label{OptiProblemPositivity}
		f^* 
=		\begin{array}[t]{rl}
		\max		&	\lambda	\\
		\mbox{s.t.}	&	\lambda\in\R ,\, \forall\, z\in \Image: \, f(z) - \lambda \geq 0
		\end{array}
\geq	f_{\mathrm{sos}}
:=		\begin{array}[t]{rl}
		\sup		&	\lambda	\\
		\mbox{s.t.}	&	\lambda\in\R ,\, f - \lambda \in \mathrm{QM}(\posmat)
		\end{array}
.
\end{equation}

\subsubsection{Moment relaxation}

A linear functional $\functional\in\RX^*$ is said to have a \textbf{representing probability measure on $\Image$}, if there exists a probability measure $\eta$ on $\R^n$ with support in $\Image$, such that, for all $p \in \RX$, $\int_\Image p(z) \, \mathrm{d} \eta(z) = \functional(p)$. Such a functional is nonnegative on $\mathrm{QM}(\posmat)$ and relaxing to this constraint gives another lower bound
\begin{equation}\label{eq_momentrewrite}
		f^*
=		\begin{array}[t]{rl}
		\min		&	\functional(f) \\
		\mbox{s.t.}	&	\functional\in\RX^* \mbox{ has a representing} \\
					&	\mbox{probability measure on } \Image
		\end{array}
\geq	f_{\mathrm{mom}}
:=		\begin{array}[t]{rl}
		\inf		&	\functional(f)	\\
		\mbox{s.t.}	&	\functional\in\RX^*,\,\functional(1)=1,\\
					&	\forall f\in \mathrm{QM}(\posmat):\,\functional (f) \geq 0
		\end{array}
.
\end{equation}

We have $f_{\mathrm{sos}} \leq f_{\mathrm{mom}}$. Indeed, if $\functional$ is feasible for $f_{\mathrm{mom}}$ and $\lambda$ is feasible for $f_{\mathrm{sos}}$, then
\begin{equation}\label{eq_DualityGap}
	\functional(f) - \lambda
=	\functional(\underbrace{f - \lambda}_{\in\mathrm{QM}(\posmat)})
\geq 0.
\end{equation}
We say that $\mathrm{QM}(\posmat)$ is \textbf{Archimedean}, if there exists $p\in \mathrm{QM}(\posmat)$, such that $\{z\in\R^n\,\vert\,p(z)\geq 0\}$ is compact. 

\begin{theorem}\label{thm_holscherer}
	\emph{\cite{holscherer05,holscherer06}} Assume that $\mathrm{QM}(\posmat)$ is Archimedean.
	\begin{enumerate}
		\item Let $p\in\RX$. If $p>0$ on $\Image$, then $p\in \mathrm{QM}(\posmat)$.
		\item Let $\functional\in\RX^*$. If $\functional\geq 0$ on $\mathrm{QM}(\posmat)$, then $\functional$ has a representing probability measure on $\Image$.
		\item Equality holds in \Cref{OptiProblemPositivity} and \Cref{eq_momentrewrite}.
	\end{enumerate}
\end{theorem}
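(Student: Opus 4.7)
The plan is to address the three parts in sequence, treating Part 1 as the central statement from which Parts 2 and 3 follow by standard duality arguments. Since this is the cited Hol--Scherer theorem, the proof would in practice consist mainly of recalling the argument; the non-trivial work is entirely in Part 1.

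\textbf{Part 1 (matrix Positivstellensatz).} Let $p\in\RX$ with $p>0$ on $\Image$. My plan is to reduce the matrix inequality $\posmat(z)\succeq 0$ to a scalar constraint and invoke the classical Putinar Positivstellensatz. Introducing an auxiliary vector $\xi=(\xi_1,\ldots,\xi_n)$, set $\tilde{p}(z,\xi):=\xi^{t}\posmat(z)\xi\in\R[z,\xi]$, so that $\posmat(z)\succeq 0$ iff $\tilde{p}(z,\xi)\geq 0$ for all $\xi$. Consider the compact semi--algebraic set
\[
	K:=\{(z,\xi)\in\R^n\times\R^n \,\vert\, \tilde{p}(z,\xi)\geq 0,\ \sprod{\xi,\xi}=1\}.
\]
The Archimedean hypothesis on $\mathrm{QM}(\posmat)$ ensures $\Image$ is compact, and together with the sphere constraint it implies that the scalar quadratic module in $\R[z,\xi]$ generated by $\tilde{p}(z,\xi)$, $1-\sprod{\xi,\xi}$ and $\sprod{\xi,\xi}-1$ is Archimedean. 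Since $p$, viewed as an element of $\R[z,\xi]$, is strictly positive on $K$, Putinar's theorem yields an SOS representation. The main obstacle, and the technical core of Hol--Scherer, is to pass from this scalar certificate back to a genuine matrix certificate of the form $p=q+\trace(\posmat\,\mathbf{Q})$ with $q\in\mathrm{SOS}(\RX)$ and $\mathbf{Q}\in\mathrm{SOS}(\RX^n)$. The key idea is to exploit that $p$ is independent of $\xi$: one symmetrises the scalar SOS decomposition in $\xi$ by averaging under the orthogonal group action on $\xi$ (or integrating against the uniform measure on $S^{n-1}$), then reads off the piece that is quadratic in $\xi$ and identifies it with $\xi^{t}\mathbf{Q}(z)\xi$ for some matrix SOS $\mathbf{Q}$, while the rest collapses into a scalar SOS term.

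\textbf{Part 2 (moment problem).} This follows from Part 1 together with Haviland's theorem. Assume $\functional\geq 0$ on $\mathrm{QM}(\posmat)$. For any $p\in\RX$ with $p\geq 0$ on $\Image$ and any $\epsilon>0$, the polynomial $p+\epsilon$ is strictly positive on the compact set $\Image$, hence belongs to $\mathrm{QM}(\posmat)$ by Part 1. Therefore
\[
	\functional(p)+\epsilon\,\functional(1)
=	\functional(p+\epsilon)
\geq 0,
\]
and letting $\epsilon\to 0$ gives $\functional(p)\geq 0$. Thus $\functional$ is nonnegative on the cone of all polynomials nonnegative on $\Image$, and Haviland's theorem produces a positive Borel measure $\eta$ on $\R^n$, supported in $\Image$, representing $\functional$. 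Under the normalisation $\functional(1)=1$ used in \Cref{eq_momentrewrite}, $\eta$ is a probability measure.

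\textbf{Part 3 (strong duality).} The inequality $f_{\mathrm{sos}}\leq f_{\mathrm{mom}}$ was already observed in \Cref{eq_DualityGap}, and $f_{\mathrm{mom}}\leq f^*$ is immediate: for any $\functional$ feasible in \Cref{eq_momentrewrite}, the representing probability measure $\eta$ provided by Part 2 satisfies $\functional(f)=\int_{\Image}f\,\mathrm{d}\eta\geq f^*\,\eta(\Image)=f^*$. Conversely, for any $\lambda<f^*$ the polynomial $f-\lambda$ is strictly positive on $\Image$, so by Part 1 it lies in $\mathrm{QM}(\posmat)$, making $\lambda$ feasible in \Cref{OptiProblemPositivity}. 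Taking the supremum over such $\lambda$ yields $f_{\mathrm{sos}}\geq f^*$, so the three quantities coincide.
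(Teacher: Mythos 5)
First, a point of reference: the paper does not prove \Cref{thm_holscherer} at all — it is quoted from \cite{holscherer05,holscherer06} and used as a black box — so there is no internal proof to compare yours against. Your Parts 2 and 3 are the standard deductions (an $\varepsilon$-shift plus Haviland's theorem, and weak duality combined with Part 1 applied to $f-\lambda$ for $\lambda<f^*$); they are correct once Part 1 is available.

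The gap is in Part 1, and it occurs at the very first step. The set $K=\{(z,\xi)\,\vert\,\xi^{t}\posmat(z)\xi\geq 0,\ \sprod{\xi,\xi}=1\}$ does not encode $\posmat(z)\succeq 0$: a unit vector $\xi$ with $\xi^{t}\posmat(z)\xi\geq 0$ exists whenever $\posmat(z)$ has at least one nonnegative eigenvalue, so the projection of $K$ onto the $z$--coordinates is the generally much larger set where $\posmat(z)$ is not negative definite. Hence $p>0$ on $\Image$ does \emph{not} imply $p>0$ on $K$, and Putinar's theorem cannot be invoked on $K$. This is not a notational slip but the essential obstruction: the condition ``$\xi^{t}\posmat(z)\xi\geq 0$ for \emph{all} unit $\xi$'' is a universally quantified family of constraints, not a single polynomial inequality in $(z,\xi)$, which is exactly why the matrix Positivstellensatz is not a formal corollary of the scalar one. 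A second, independent problem is your justification of the Archimedean property of the scalarized quadratic module: compactness of $\Image$ never implies that a quadratic module describing it is Archimedean (that distinction is the entire content of the Archimedean hypothesis), and transferring a certificate $N-\norm{z}^{2}=q+\sum_i Q_i^{t}\posmat\,Q_i\in\mathrm{QM}(\posmat)$ into the scalar module generated by $\xi^{t}\posmat\xi$ and $\pm(1-\sprod{\xi,\xi})$ is not automatic. Your symmetrization idea for the return direction is actually sound — averaging a scalar SOS identity against $\xi\xi^{t}$ over the unit sphere does yield a matrix SOS, by expanding in an orthonormal basis of polynomials on the sphere — but it cannot rescue an argument whose input certificate does not exist. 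The proofs in \cite{holscherer05,holscherer06} do not scalarize in this way; they adapt the separation and GNS construction underlying Putinar's proof directly to the matrix setting.
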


\begin{remark}
In our case, the Archimedean property can be enforced by adding an explicitly known ball constraint. Indeed, for $z\in\Image$, we have $n \geq \norm{z}^2$, and thus $\Image = \{z\in\R^n\,\vert\,\widehat{\posmat}(z)\succeq 0\}$, where $\widehat{\posmat}:=\mathrm{diag}(\posmat,n - \norm{z}^2)\in\RX^{(n+1) \times (n+1)}$. With $\mathbf{Q} = \mathrm{diag}(0,\ldots,0,1)\in\mathrm{SOS}(\RX^{n+1})$, we have $\trace(\widehat{\posmat}\,\mathbf{Q})\in \mathrm{QM}(\widehat{\posmat})$ and the set $\{z\in\R^n\,\vert\,\trace(\widehat{\posmat}(z)\,\mathbf{Q}(z))\}$ is compact.
\end{remark}

\subsection{Lasserre hierarchy with Chebyshev polynomials}

The conditions $f-\lambda \in \mathrm{QM}(\posmat)$ from \Cref{OptiProblemPositivity} and $\functional\geq 0$ on $\mathrm{QM}(\posmat)$ from \Cref{eq_momentrewrite} can be parametrized through positive semi--definite conditions, but for computations we need to restrict to finite dimensional subspaces of $\RX$. We shall now introduce these conditions in the basis of generalized Chebyshev polynomials and then adapt Lasserre's hierarchy \cite{lasserre01} to approximate the optimal value $f^*$ with semi--definite programs \cite{boyd96}. In particular, we present these positive semi--definite conditions in the way they are implemented in our Maple package\footnote{\href{https://github.com/TobiasMetzlaff/GeneralizedChebyshev}{https://github.com/TobiasMetzlaff/GeneralizedChebyshev}}.

\subsubsection{Chebyshev filtration}

For $\functional \in\RX^*$, we define the infinite symmetric matrix $\momm^\functional := \functional(\mathbf{T} \, \mathbf{T}^t)$, where $\functional$ applies entry--wise and $\mathbf{T}$ is the vector of basis elements $T_\weight$ with $\weight\in\Weights^+$.

Then we can also define the $\posmat$--localized matrix $\momm^{\posmat * \functional} := \functional(\posmat\otimes (\mathbf{T} \, \mathbf{T}^t))$. Here, $\functional$ applies entry--wise and $\otimes$ denotes the Kronecker product. The entries of this infinite matrix, indexed by $\weight,\nu\in\Weights^+$, are symmetric blocks of size $n$.

As in \cite{henrion06}, we see that $\functional\geq 0$ on $\mathrm{QM}(\posmat)$ is equivalent to $\momm^{\functional} \succeq 0$ and $\momm^{\posmat * \functional} \succeq 0$. By \Cref{eq_TPolyRecurrence}, for $\weight,\nu\in\Weights^+$, the entries of $\momm^{\functional}$ are
\begin{equation}\label{eq_momentmatrix}
\momm^{\functional}_{\weight\,\nu}
=	\functional (T_\weight\, T_\nu )
=	\dfrac{1}{\nops{\weyl}} \sum\limits_{A\in\weyl} \functional(T_{A\,\weight + \nu}) \in \R.
\end{equation}
Furthermore, let us assume that the matrix $\posmat$ in \Cref{eq_polyoptprob} is represented in the Chebyshev basis as
\[
\posmat(z) = \sum\limits_{\gamma\in\Weights^+} \posmat_\gamma \, T_\gamma(z) \in\RX^{n\times n} 
\]
with $\posmat_\gamma\in\R^{n\times n}$. The entries of $\momm^{\posmat * \functional}$ are
\begin{equation}\label{eq_localizedmomentmatrix}
\momm^{\posmat * \functional}_{\weight\,\nu}
=	\sum\limits_{\gamma\in\Weights^+} \posmat_\gamma \,\functional(T_\weight\, T_\nu\,T_\gamma )
=	\dfrac{1}{\nops{\weyl}^2} \sum\limits_{\gamma \in \Weights^+} \posmat_\gamma \sum\limits_{A,B\in\weyl} \functional(T_{A\weight+B\nu+\gamma})
\in	\R^{n\times n}.
\end{equation}
Restricting $\functional$ to a finite dimensional subspace of $\RX$ in \Cref{eq_momentrewrite} means to truncate the matrices $\momm^{\functional}$ and $\momm^{\posmat * \functional}$ at the corresponding rows and columns. However, since we have chosen the Chebyshev polynomials as a basis, we need to ensure that these matrices are well--defined: For an index of the form $A\,\weight + \nu$ in \Cref{eq_momentmatrix}, there is a unique dominant weight in the same $\weyl$--orbit, say $\tilde{\weight}$, and $\functional$ must be defined on $T_{\tilde{\weight}}$, so that we can compute the matrix entries of $\momm^{\functional}$ (and analogously for $\momm^{\posmat * \functional}$).

\begin{proposition}\label{thm_FiltrationDegree}
Let $\Roots$ be an irreducible root system with highest root $\highestroot$. For $d \in \N$, we define the finite dimensional $\R$--vector subspace
\[
\filt{d}
:=	\langle \{ T_\weight \,\vert\, \weight\in\Weights^+,\,\sprod{\weight,\,\highestroot^\vee} \leq d \}\rangle_\R
\]
of $\RX$. Then $(\filt{d})_{d\in\N}$ is a filtration of $\RX$ as an $\R$--algebra, that is,
\begin{enumerate}
	\item $\RX=\bigcup\limits_{d\in\N}\filt{d}$ and
	\item $\filt{d_1}\,\filt{d_2} \subseteq \filt{d_1 + d_2}$ whenever $d_1,d_2\in\N$.
\end{enumerate}
\end{proposition}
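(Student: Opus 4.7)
The strategy uses two ingredients already available: the set $\{\Tche{\weight}\}_{\weight \in \Weights^+}$ is a vector space basis of $\RX$, and the product identity $\nops{\weyl}\, \Tche{\weight}\, \Tche{\nu} = \sum_{A \in \weyl} \Tche{\weight + A\nu}$ from \Cref{eq_TPolyRecurrence}, read with the convention $\Tche{\lambda} = \Tche{B\lambda}$ for every $B \in \weyl$ (since $\gencos{\lambda}$ depends only on the $\weyl$-orbit of $\lambda$). The linear functional $\weight \mapsto \sprod{\weight, \highestroot^\vee}$ plays the role of weighted degree. First I would check that $\filt{d}$ is well-defined and finite-dimensional: for an irreducible root system the highest root is dominant, so $\highestroot^\vee$ is dominant as well, and its expansion $\highestroot^\vee = \sum_i \beta_i\, \roots_i^\vee$ in the simple coroot basis has all $\beta_i \in \N_{>0}$. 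By the duality $\sprod{\fweight{i}, \roots_j^\vee} = \delta_{ij}$ this yields $\sprod{\fweight{i}, \highestroot^\vee} = \beta_i$, so that for every dominant $\weight = \sum_i n_i\, \fweight{i}$ one has $\sprod{\weight, \highestroot^\vee} = \sum_i n_i \beta_i \in \N$, and only finitely many $\weight \in \Weights^+$ satisfy the bound.

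Part (1) is then immediate: each basis element $\Tche{\weight}$ belongs to $\filt{\sprod{\weight, \highestroot^\vee}}$, so $\bigcup_{d \in \N} \filt{d}$ already contains a spanning set of $\RX$. For part (2), let $\weight, \nu \in \Weights^+$ with $\sprod{\weight, \highestroot^\vee} = d_1$ and $\sprod{\nu, \highestroot^\vee} = d_2$, and let $\mu \in \Weights^+$ denote the unique dominant representative of $\weight + A\nu$ for some $A \in \weyl$, so $\mu = B(\weight + A\nu)$ for some $B \in \weyl$. By linearity and the product formula, it suffices to prove the degree bound $\sprod{\mu, \highestroot^\vee} \leq d_1 + d_2$ for each $A$.

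Using orthogonality of $\weyl$ I would rewrite $\sprod{\mu, \highestroot^\vee} = \sprod{\weight, B^{-1}\highestroot^\vee} + \sprod{\nu, (BA)^{-1}\highestroot^\vee}$ and then invoke the standard orbit-extremality inequality: for any two dominant elements $u, v \in \overline{\PC}$ and any $C \in \weyl$, $\sprod{u, Cv} \leq \sprod{u, v}$. Applied with $u = \weight, \nu$ and $v = \highestroot^\vee$, this bounds each summand by $d_1$ and $d_2$ respectively. The inequality itself follows from the fact that $v - Cv$ is a non-negative integer combination of simple roots whenever $v$ is dominant, which one proves by induction on the length of $C$ starting from the identity $v - s_i v = \sprod{v, \roots_i^\vee}\,\roots_i$; pairing with the dominant vector $u$ then gives $\sprod{u, v - Cv} \geq 0$. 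The main obstacle in the whole argument is precisely this orbit-extremality lemma, which is classical but requires the interaction between dominance and the partial order $\succeq$; once it is in place, the rest of the proof is formal manipulation of the product formula and summation over $A \in \weyl$.
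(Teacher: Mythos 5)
Your proposal is correct and follows essentially the same route as the paper: reduce to the product formula $\nops{\weyl}\,T_\weight\,T_\nu=\sum_{A}T_{\weight+A\nu}$ and bound $\sprod{B(\weight+A\nu),\highestroot^\vee}$ using the classical fact that a dominant element dominates its Weyl orbit (differences are nonnegative combinations of simple roots) together with dominance of $\highestroot^\vee$. The only cosmetic difference is that you apply the orbit--domination lemma to $\highestroot^\vee$ and pair with the dominant $\weight,\nu$, whereas the paper applies \cite[Chapitre VI, \S 1, Proposition 18]{bourbaki456} to $\weight,\nu$ and pairs with the dominant $\highestroot^\vee$; by orthogonality of $\weyl$ these are the same inequality.
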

\begin{proof}
\emph{1.} Let $p=\sum_\weight c_\weight\,T_\weight\in\RX$ and choose $d\in\N$ with $d\geq \sprod{\weight,\highestroot^\vee}$ whenever $c_\weight\neq 0$. Then $p\in\filt{d}$.
	
\emph{2.} Let $T_\weight\in\filt{d_1}$ and $T_\nu\in\filt{d_2}$. Then $\nops{\weyl}\,T_\weight\,T_\nu=\sum_A T_{\weight+A\nu}$. For all $A\in\weyl$, there exists $B\in\weyl$, such that $B(\weight+A\nu)\in\Weights^+$. By \cite[Chapitre VI, \S 1, Proposition 18]{bourbaki456}, $\weight - B \weight$ and $\nu - B A \nu$ are sums of positive roots. Hence, there exists $\alpha\in \N^n$, such that
\[
\sprod{B (\weight + A \nu),\highestroot^\vee}
=		\sprod{\weight + \nu,\highestroot^\vee} - \sum\limits_{i=1}^n \alpha_i \sprod{\roots_i,\highestroot^\vee}.
\]
By \cite[Chapitre VI, \S 1.8, Proposition 25]{bourbaki456}, we have $\highestroot^\vee \in \overline{\PC}$ and thus $\sprod{\roots_i,\highestroot^\vee}\geq 0$. We obtain
\[
\sprod{B ( \weight + A \nu),\highestroot^\vee}
\leq	\sprod{\weight + \nu,\highestroot^\vee}
\leq	d_1 + d_2.
\]
Therefore, $T_\weight\,T_\nu\in\filt{d_1+d_2}$.
\end{proof}

\begin{remark}
For irreducible root systems, the filtration induces a weighted degree on $\RX$. Otherwise, we can always construct a filtration by choosing an order on the irreducible components. From now on, we may therefore assume all root systems to be irreducible.
\end{remark}

\subsubsection{Modified Lasserre hierarchy}

When $\functional$ is only defined on $\filt{2d}$, that is, $\functional\in\filt{2d}^*$, then the matrix $\momm^\functional$ is by \Cref{thm_FiltrationDegree} well--defined for all rows and columns up to weighted degree $d$. We denote this truncated matrix of size $\dim(\filt{d})$ by $\momm^\functional_d$. Analogously, for
\[
d
\geq D
:=	\,\min \{ \lceil \ell/2 \rceil \,\vert\,\ell\in\N,\, \posmat \in (\filt{\ell})^{n\times n} \},
\]
the truncated $\posmat$--localized matrix $\momm^{\posmat * \functional}_{d-D}$ is well-defined and of size $n\,\dim(\filt{d-D})$.

On the other hand, if $\mathbf{Q}_1,\ldots,\mathbf{Q}_k\in\filt{d}^n$ are polynomial vectors with entries of weighted degree at most $d$, then the polynomial matrix $\mathbf{Q}=\sum_i \mathbf{Q}_i\,\mathbf{Q}_i^t\in\filt{2d}^{n\times n}$ is a sum of squares. We write $\mathbf{Q}\in\mathrm{SOS}(\filt{d}^n)$ and see that the truncated quadratic module
\[
	\mathrm{QM}(\posmat)_d
:=	\{ q+\trace(\posmat\,\mathbf{Q})\,\vert\, q \in \mathrm{SOS}(\filt{d}) ,\,\mathbf{Q} \in \mathrm{SOS}(\filt{d-D}^n) \}
\]
is contained in $\filt{2d}$. We fix a \textbf{hierarchy order} $d\in\N$, that has to satisfy
\begin{equation}\label{eq_MinimalOrderOfRelaxation}
d \geq \max\{ \min\{\lceil  \ell/2 \rceil \,\vert\,\ell\in\N,\, f\in\filt{\ell} \} ,\, D \} ,
\end{equation}
where $f$ is the objective function from \Cref{eq_polyoptprob}. The \textbf{Chebyshev moment and SOS hierarchy of order $d$} is
\begin{equation}\label{OptiProblemMomRelax}
	f_{\mathrm{mom}}^d
:=	\begin{array}[t]{rl}
	\inf		&	\functional(f) \\
	\mbox{s.t.}	&	\functional\in\filt{2 d}^*,\,\functional(1)=1, \\
				&	\momm^\functional_d,\,\momm^{p*\functional}_{d-D} \succeq 0
\end{array}
\tbox{and}
	f_{\mathrm{sos}}^d 
:=	\begin{array}[t]{rl}
	\sup		&	\lambda \\
	\mbox{s.t.}	&	\lambda\in\R,	\\
				&	f - \lambda \in \mathrm{QM}(\posmat)_d
	\end{array}
.
\end{equation}

\begin{theorem}\label{thm_SosHierarchy}
The following statements hold.
\begin{enumerate}
\item The sequences $(f_{\mathrm{sos}}^d)_{d\in\N}$ and $(f_{\mathrm{mom}}^d)_{d\in\N}$ are monotonously non--decreasing.
\item For $d\in\N$, we have $f_{\mathrm{sos}}^d \leq f_{\mathrm{mom}}^d$.
\item If $\mathrm{QM}(\posmat)$ is Archimedean, then $\lim\limits_{d \to \infty} f_{\mathrm{sos}}^d = \lim\limits_{d \to \infty} f_{\mathrm{mom}}^d = f^*$.
\end{enumerate}
\end{theorem}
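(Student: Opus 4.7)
The plan is to prove the three assertions in sequence, observing that the first two are essentially formal while the third relies crucially on \Cref{thm_holscherer}.

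For monotonicity, I would exploit that the filtration is increasing: if $d_1\leq d_2$ then $\filt{2d_1}\subseteq \filt{2d_2}$, so the restriction of any $\functional\in\filt{2d_2}^*$ to $\filt{2d_1}$ is still feasible for the moment program at order $d_1$ (the truncated moment and localized matrices at level $d_1$, $d_1-D$ are principal submatrices of those at level $d_2$, $d_2-D$ and hence remain positive semi--definite), and preserves $\functional(f)$ since $f\in\filt{2d_1}$ once $d_1$ is above the threshold in \Cref{eq_MinimalOrderOfRelaxation}. Thus $f_{\mathrm{mom}}^{d_1}\leq f_{\mathrm{mom}}^{d_2}$. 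On the SOS side, $\mathrm{QM}(\posmat)_{d_1}\subseteq \mathrm{QM}(\posmat)_{d_2}$ directly, so any admissible $\lambda$ at order $d_1$ remains admissible at order $d_2$, giving $f_{\mathrm{sos}}^{d_1}\leq f_{\mathrm{sos}}^{d_2}$.

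For weak duality at a fixed order $d$, take feasible $\lambda$ and $\functional$. Writing $f-\lambda = q+\trace(\posmat\,\mathbf{Q})$ with $q\in \mathrm{SOS}(\filt{d})$ and $\mathbf{Q}\in \mathrm{SOS}(\filt{d-D}^n)$, I would decompose $q=\sum_i q_i^2$ and $\mathbf{Q}=\sum_j \mathbf{Q}_j\mathbf{Q}_j^t$. Writing each $q_i$ and each entry of $\mathbf{Q}_j$ as a coefficient vector $v_i$, $w_j$ in the Chebyshev basis $\{T_\weight : \sprod{\weight,\highestroot^\vee}\leq d\}$ (respectively $\leq d-D$), the recurrence \eqref{eq_TPolyRecurrence} allows one to identify
\[
\functional(q_i^2) = v_i^{\,t}\,\momm^\functional_d\,v_i \geq 0, \qquad \functional\bigl(\trace(\posmat\,\mathbf{Q}_j\mathbf{Q}_j^t)\bigr) = w_j^{\,t}\,\momm^{\posmat*\functional}_{d-D}\,w_j \geq 0,
\]
where the second identity uses the Kronecker structure in \Cref{eq_localizedmomentmatrix}. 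Summing yields $\functional(f)-\lambda = \functional(f-\lambda)\geq 0$, which is the required inequality $f_{\mathrm{sos}}^d\leq f_{\mathrm{mom}}^d$.

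For convergence, I would first observe the easy upper bound $f_{\mathrm{mom}}^d\leq f^*$: let $z^*\in\Image$ be a minimizer (existing by compactness) and take the Dirac functional $\functional(p)=p(z^*)$ restricted to $\filt{2d}$. Then $\momm^\functional_d = \mathbf{T}(z^*)\mathbf{T}(z^*)^t\succeq 0$ and $\momm^{\posmat*\functional}_{d-D}=\posmat(z^*)\otimes\mathbf{T}(z^*)\mathbf{T}(z^*)^t\succeq 0$ because $\posmat(z^*)\succeq 0$, giving a feasible functional with value $f(z^*)=f^*$. Combined with monotonicity and weak duality, the sequences are non--decreasing and bounded above by $f^*$. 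The main (and the only substantive) step is the matching lower bound $\lim_{d\to\infty} f_{\mathrm{sos}}^d \geq f^*$: for any $\varepsilon>0$, the polynomial $f-f^*+\varepsilon$ is strictly positive on $\Image$, so under the Archimedean hypothesis \Cref{thm_holscherer}(1) yields a representation $f-f^*+\varepsilon = q+\trace(\posmat\,\mathbf{Q})$ with $q\in \mathrm{SOS}(\RX)$ and $\mathbf{Q}\in \mathrm{SOS}(\RX^n)$. This representation has finite weighted degree, so for all sufficiently large $d$ one has $q\in \mathrm{SOS}(\filt{d})$ and $\mathbf{Q}\in \mathrm{SOS}(\filt{d-D}^n)$, i.e.\ $f-(f^*-\varepsilon)\in \mathrm{QM}(\posmat)_d$ and $f_{\mathrm{sos}}^d\geq f^*-\varepsilon$. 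Letting $d\to\infty$ and then $\varepsilon\to 0$ concludes the argument and shows all three limits coincide with $f^*$. The only delicate point is ensuring the Chebyshev--weighted degree of the Hol--Scherer representation is itself finite, which is immediate from \Cref{thm_FiltrationDegree} applied to the finitely many $T_\weight$ appearing.
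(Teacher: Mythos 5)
Your proposal is correct and follows essentially the same route as the paper: monotonicity from the chain $\filt{1}\subseteq\filt{2}\subseteq\ldots$, weak duality by evaluating $\functional$ on the truncated quadratic module, and convergence by applying \Cref{thm_holscherer} to $f-f^*+\varepsilon$ and using that the filtration exhausts $\RX$. The only difference is that you make explicit (via the Dirac functional at a minimizer) the upper bound $f_{\mathrm{mom}}^d\leq f^*$, which the paper leaves implicit through the inclusion of the truncated problems into \Cref{OptiProblemPositivity} and \Cref{eq_momentrewrite}.
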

\begin{proof}
\emph{1.} follows from the chain of inclusions $\filt{1}\subseteq\filt{2}\subseteq\ldots$

\emph{2.} is analogous to \Cref{eq_DualityGap}.

\emph{3.} By \Cref{thm_holscherer}, for any $\varepsilon > 0$, there exist sums of squares $q$ and $\mathbf{Q}$, such that
\[
	f - f^* + \varepsilon
=	q + \trace(\posmat\,\mathbf{Q}).
\]
Since $\varepsilon$ is arbitrary and $\bigcup\limits_{d\in\N}\filt{d}=\RX$, we obtain $\lim\limits_{d \to \infty} f_{\mathrm{sos}}^d = f^*$. With \emph{2.}, the same holds for $f_{\mathrm{mom}}^d$.
\end{proof}

\subsubsection{SDP formulation}

We translate \Cref{OptiProblemMomRelax} to a semi--definite program (SDP), so that the problem can be implemented and a solution be approximated with solvers such as \textsc{Mosek}\footnote{{O}ptimizer {API} for {P}ython 3 \href{docs.mosek.com/latest/pythonapi/index.html}{docs.mosek.com/latest/pythonapi/index.html}}. For $d\in\N$ and a linear functional $\functional\in\filt{2d}^*$, we write
\begin{equation}\label{MomentMatrixCoeff}
	\begin{pmatrix}
	\momm^{\functional}_{d}	&	0	\\
	0						&	\momm^{\posmat*\functional}_{d-D}
	\end{pmatrix}
=	\sum\limits_{\weight\in \Weights^+} \functional(T_\weight) \, \mathbf{A}_\weight,
\end{equation}
where $\mathbf{A}_\weight$ is the symmetric matrix coefficient of $\functional(T_\weight)$. For $d\geq D$,  $\functional(T_\weight)$ is well--defined whenever $\mathbf{A}_\weight\neq 0$. We write $\mathrm{Sym}^{(d)} := \mathrm{Sym}^{\dim(\filt{d})} \times \mathrm{Sym}^{n\,\dim(\filt{d-D})}$ for the space of symmetric matrices with two blocks. The positive semi--definite elements are denoted by $\mathrm{Sym}^{(d)}_{\succeq 0}$ and we define the dual problems
\begin{equation}\label{OptiMomPrimalDualRelax}
\primalprob \begin{array}[t]{rl}
\inf		&	\sum\limits_{\weight \in S}
				c_\weight \, \mathbf{y}_\weight	\\
\mbox{s.t.}	&	\mathbf{y}\in\R^{\dim(\filt{2 d})} ,\, \mathbf{y}_0=1,	\\
			&	\mathbf{Z} = \sum\limits_{\weight\in\Weights^+} \mathbf{y}_\weight \, \mathbf{A}_\weight\in\mathrm{Sym}^N_{\succeq 0}
\end{array}
\tbox{and} \quad
\dualprob \begin{array}[t]{rl}
\sup		&	c_0 - \trace(\mathbf{A}_0\,\mathbf{X}) \phantom{\sum\limits_{S}} \\
\mbox{s.t.}	&	\mathbf{X}\in\mathrm{Sym}^{(d)}_{\succeq 0} ,\, \forall\,\weight\in S\setminus\{0\}:	\\
			&	\trace(\mathbf{A}_\weight\,\mathbf{X}) = c_\weight
\end{array}
.
\end{equation}

\begin{proposition}\label{prop_SDPDuality}
The optimal value of $\primalprob$ is $f^d_{\mathrm{mom}}$ and the optimal value of $\dualprob$ is $f^d_{\mathrm{sos}}$.
\end{proposition}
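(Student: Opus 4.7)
The plan is to establish the two equalities $\primalprob = f^d_{\mathrm{mom}}$ and $\dualprob = f^d_{\mathrm{sos}}$ separately, each via an explicit change of variables that identifies the SDP with the definition of the corresponding relaxation.

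For the primal side, the set $\{T_\weight \,:\, \weight \in \Weights^+ \cap \filt{2d}\}$ is a vector space basis of $\filt{2d}$, so the map $\functional \mapsto (\functional(T_\weight))_{\weight}$ is a linear bijection $\filt{2d}^* \to \R^{\dim(\filt{2d})}$. Setting $\mathbf{y}_\weight := \functional(T_\weight)$, the normalization $\functional(1)=1$ becomes $\mathbf{y}_0 = 1$. By the very definition \Cref{MomentMatrixCoeff}, the block diagonal matrix $\diag(\momm^\functional_d, \momm^{\posmat*\functional}_{d-D})$ equals $\sum_{\weight} \mathbf{y}_\weight \mathbf{A}_\weight = \mathbf{Z}$, and joint positive semi--definiteness of the two blocks is equivalent to $\mathbf{Z} \succeq 0$. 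Finally $\functional(f) = \sum_{\weight \in S} c_\weight \mathbf{y}_\weight$ by linearity, so the feasible sets and the objectives of $\primalprob$ and of the moment relaxation match.

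For the dual side, I would parametrize the SOS decompositions via Gram matrices: $q \in \mathrm{SOS}(\filt{d})$ if and only if $q = \mathbf{T}_d^t X_1 \mathbf{T}_d$ for some $X_1 \succeq 0$ of size $\dim(\filt{d})$, and analogously $\mathbf{Q} \in \mathrm{SOS}(\filt{d-D}^n)$ if and only if $\mathbf{Q} = (I_n \otimes \mathbf{T}_{d-D})^t X_2 (I_n \otimes \mathbf{T}_{d-D})$ for some $X_2 \succeq 0$ of size $n\dim(\filt{d-D})$. Using the cyclic property of the trace,
$$q + \trace(\posmat\,\mathbf{Q}) = \trace\!\left(X_1 \cdot \mathbf{T}_d \mathbf{T}_d^t\right) + \trace\!\left(X_2 \cdot \posmat \otimes \mathbf{T}_{d-D} \mathbf{T}_{d-D}^t\right).$$
Expanding the matrix polynomials $\mathbf{T}_d \mathbf{T}_d^t$ and $\posmat \otimes \mathbf{T}_{d-D} \mathbf{T}_{d-D}^t$ in the Chebyshev basis via \Cref{eq_TPolyRecurrence} and reading off the coefficient of each $T_\weight$ yields exactly $\trace(\mathbf{A}_\weight \mathbf{X})$, with $\mathbf{X} := \diag(X_1, X_2) \in \mathrm{Sym}^{(d)}_{\succeq 0}$ and the same $\mathbf{A}_\weight$ as in \Cref{MomentMatrixCoeff}. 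Matching coefficients in $f - \lambda = q + \trace(\posmat\,\mathbf{Q})$ then gives $\lambda = c_0 - \trace(\mathbf{A}_0 \mathbf{X})$ at the constant term $T_0$ and $c_\weight = \trace(\mathbf{A}_\weight \mathbf{X})$ for all other $\weight$ (with the convention $c_\weight := 0$ when $\weight \notin S$), so maximizing $\lambda$ reproduces $\dualprob$.

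The main obstacle is the dual reformulation: one must justify the Gram parametrization of matrix SOS in the non--monomial Chebyshev basis by stacking the coefficient vectors of the $\mathbf{Q}_i$ into a matrix whose Gram is $X_2$, and then verify that the block structure of the moment matrices produced by $\mathbf{T}_d \mathbf{T}_d^t$ and $\posmat \otimes \mathbf{T}_{d-D} \mathbf{T}_{d-D}^t$ really does coincide with the $\mathbf{A}_\weight$ defined on the moment side. Both points reduce to the multiplication rule \Cref{eq_TPolyRecurrence} and the $\weyl$--orbit reduction to dominant weights already built into the definitions of $\momm^\functional$ and $\momm^{\posmat*\functional}$, so the argument is conceptually straightforward but requires careful bookkeeping.
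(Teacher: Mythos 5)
Your proposal is correct and follows essentially the same route as the paper: the primal identification via $\mathbf{y}_\weight=\functional(T_\weight)$ is identical, and your Gram--matrix parametrization of the matrix SOS cone is precisely the paper's construction of $\mathbf{X}_2=\sum_i \mathbf{vec}(\mathbf{mat}(\mathbf{Q}_i))\,\mathbf{vec}(\mathbf{mat}(\mathbf{Q}_i))^t$ written in closed form, with the same trace manipulation and coefficient comparison. The only cosmetic difference is that you state the Gram correspondence as an equivalence up front, whereas the paper handles the converse direction separately by decomposing $\mathbf{X}_1,\mathbf{X}_2$ into rank--one terms.
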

\begin{proof}
The statement for $\primalprob$ follows immediately with $\mathbf{y}_\weight=\functional(T_\weight)$ and $\mathbf{Z} = \mathrm{diag} (\momm^{\functional}_{d} , \momm^{\posmat*\functional}_{d-D})$. Let $\functional\in\filt{2d}^*$ and $\lambda \in \R$ be feasible for \Cref{OptiProblemMomRelax}. Then there exist $q\in\mathrm{SOS}(\filt{d})$ and $\mathbf{Q}\in \mathrm{SOS}(\filt{d-D}^n)$ with
\[
	\functional(f) - \lambda
=	\functional(f - \lambda)
=	\functional(q) + \functional(\trace(\posmat\,\mathbf{Q})).
\]
We construct a feasible matrix $\mathbf{X}=\mathrm{diag}(\mathbf{X}_1,\mathbf{X}_2)$ for $\dualprob$ as follows. Since $\mathbf{Q}$ is a sum of squares, we can write $\mathbf{Q} = \mathbf{Q}_1\,\mathbf{Q}_1^t + \ldots + \mathbf{Q}_k\,\mathbf{Q}_k^t$ and denote by $\mathbf{T}_{d-D}$ the vector of generalized Chebyshev polynomials $T_\weight \in \filt{d-D}$. For $1\leq i\leq k$, we have $\mathbf{Q}_i=\mathbf{mat}(\mathbf{Q}_i)\,\mathbf{T}_{d-D}$, where $\mathbf{mat}(\mathbf{Q}_i)$ is the coordinate matrix of the polynomial vector $\mathbf{Q}_i$ in the Chebyshev basis with $n$ rows and $\dim(\filt{d-D})$ columns. Then
\[
\begin{array}{rcl}
	\trace(\posmat\,\mathbf{Q})
&=&	\sum\limits_{i=1}^k \trace(\posmat \, \mathbf{mat}(\mathbf{Q}_i)\,\mathbf{T}_{d-D}\,\mathbf{T}_{d-D}^t \, \mathbf{mat}(\mathbf{Q}_i)^t)\\
&=&	\trace((\posmat \otimes \mathbf{T}_{d-D} \, \mathbf{T}_{d-D}^t) \underbrace{\sum\limits_{i=1}^k \mathbf{vec}(\mathbf{mat}(\mathbf{Q}_i)) \, \mathbf{vec}(\mathbf{mat}(\mathbf{Q}_i))^t}_{=:\mathbf{X}_2} ),
\end{array}
\]
where $\mathbf{vec}(\mathbf{mat}(\mathbf{Q}_i)):=((\mathbf{mat}(\mathbf{Q}_i)_{\cdot 1})^t,\ldots,(\mathbf{mat}(\mathbf{Q}_i)_{\cdot N_{d-D}})^t)^t$ are the stacked columns of $\mathbf{mat}(\mathbf{Q}_i)$. The matrix $\mathbf{X}_2$ is symmetric positive semi--definite of size $n\,\dim(\filt{d-D})$. By definition of the truncated localized moment matrix, we have $\functional(\trace(\posmat\,\mathbf{Q})) = \trace(\momm_{d-D}^{\posmat * \functional}\,\mathbf{X}_2)$. Analogously, there exists a symmetric positive semi--definite $\mathbf{X}_1$ of size $\dim(\filt{d})$ with $\functional(q) = \trace(\momm_{d}^{\functional}\,\mathbf{X}_1)$. When we fix $\mathbf{X}:=\mathrm{diag}(\mathbf{X}_1,\mathbf{X}_2)\in \mathrm{Sym}^{(d)}_{\succeq 0}$ and $\mathbf{A}_\weight$ as in \Cref{MomentMatrixCoeff}, comparing coefficients yields
\[
	\lambda
=	c_0 \, \functional(1) - \functional(q(0)) - \functional(\trace(\posmat(0)\,\mathbf{Q}(0)))
=	c_0 - \trace(\mathbf{A}_0\,\mathbf{X})
\]
and, for $\weight\neq 0$, we have $c_\weight = \trace(\mathbf{A}_\weight\,\mathbf{X})$.

Conversely, we can always construct sums of squares $q$ and $\mathbf{Q}$ from a matrix $\mathbf{X}=\mathrm{diag}(\mathbf{X}_1,\mathbf{X}_2)$ by writing $\mathbf{X}_1$ and $\mathbf{X}_2$ as sums of rank $1$ matrices.
\end{proof}

If $(\mathbf{X},\mathbf{y},\mathbf{Z})$ are optimal for $\primalprob$ and $\dualprob$, then the duality gap of the Chebyshev moment and SOS hierarchy in \Cref{OptiProblemMomRelax} is $f^d_{\mathrm{mom}} - f^d_{\mathrm{sos}} = \trace(\mathbf{X}\,\mathbf{Z}) \geq 0$.

\begin{remark}
The coefficients $c_\weight$ are known from the original problem in \emph{\Cref{eq_polyoptprob}}. The key in setting up \emph{\Cref{OptiMomPrimalDualRelax}} is the computation of the matrices $\mathbf{A}_\weight$. For fixed order $d$, we define
\begin{itemize}
\item the \textbf{matrix size} $N := \dim(\filt{d}) + n\,\dim(\filt{d-D})$ and
\item the \textbf{number of constraints} $m := \dim(\filt{2d}) - 1$.
\end{itemize}
Note that $m$ is the number of matrices $\mathbf{A}_\weight$ with $\weight \neq 0$ and $N$ is their size. Then \emph{\Cref{OptiMomPrimalDualRelax}} is a semi--definite program with primal formulation $\primalprob$ over the cone $\filt{2d}^*\cong\R^{m+1}$ with dual cone $0$ and with dual formulation $\dualprob$ over the self--dual cone $\mathrm{QM}(\posmat)_d \cong \mathrm{Sym}^{N}_{\succeq 0}$.

Computing the matrices $\mathbf{A}_\weight$ of the SDP involves the recurrence formula from \emph{\Cref{eq_TPolyRecurrence}} and is not numerical. If we used the standard monomial basis $\{1,z_1,z_2,\ldots,z_1^2,z_1 z_2,\ldots\}$, this computation would be trivial, but the matrices would be larger when truncating at the usual degree instead of the weighted degree. Hence, our technique is more efficient, if the numerical effort to solve a larger SDP in the standard monomial basis is bigger than the combined effort to numerically solve a smaller SDP in the Chebyshev basis plus matrix computation. Another upside is that, since the matrices $\mathbf{A}_\weight$ only depend on the root system $\Roots$ and the order $d$, but not on the objective function $f$, the same matrices can be used to solve several problems as a preprocess.

\begin{table}[H]
	\begin{center}
		\scalebox{0.65}{
			\begin{tabular}{|c||c|c|c|c|c|c|c|c|c|}
				\hline
				$\Roots\backslash d$	&	$2$				&	$3$				&	$4$				&	$5$					&	$6$					&
				$7$				&	$8$				&	$9$				&	$10$				\\
				\hline
				\hline
				$\RootB[2],\RootC[2]$	&	$6+2,14$		&	$10+6,27$		&	$15		+12,44$	&	$21		+20,65$		&	$28	+30,90$			&
				$36+42,119$		&	$45+56,152$		&	$55	+72,189$	&	$66	+90,230$		\\
				\hline
				$\RootG[2]$				&	$-		$		&	$6+3,15$		&	$9	+6,24$		&	$12		+12,35$		&	$16	+18,48$			&
				$20+27,63$		&	$25+36,80$		&	$30		+48,99$	&	$36	+60,120$		\\
				\hline
				$\RootA[2]$				&	$-		$		&	$10+3,27$		&	$15		+9,44$	&	$21		+18,65$		&	$28	+30,90$			&
				$36+45,119$		&	$45+63,152$		&	$55	+84,189$	&	$66	+108,230$		\\
				\hline
				$\RootB[3]$				&	$-		$		&	$13+3,49$		&	$22	+9,94$		&	$34	+21,160$		&	$50	+39,251$		&
				$70+66,371$		&	$95+102,524$	&	$125+150,714$	&	$161	+210,945$	\\
				\hline
				$\RootC[3]$				&	$-		$		&	$20+3,83$		&	$35	+12,164$	&	$56	+30,285$		&	$84	+60,454$		&
				$120+105,679$	&	$165+168,968$	&	$220+252,1329$	&	$286	+360,1770$	\\
				\hline
				$\RootA[3]$				&	$-		$		&	$-		$		&	$35		+4,164$	&	$56	+16,285$		&	$84	+40,454$		&
				$120+80,679$	&	$165+140,968$	&	$220+224,1329$	&	$286	+336,1770$	\\
				\hline
				$\RootB[4]$				&	$-		$		&	$-		$		&	$30		+4,174$	&	$50	+12,335$		&	$80	+32,587$		&
				$120+64,959$	&	$175+120,1484$	&	$245+200,2199$	&	$336	+320,3145$	\\
				\hline
				$\RootC[4]$				&	$-		$		&	$-		$		&	$70+4,494$		&	$126	+20,1000$	&	$210	+60,1819$	&
				$330+140,3059$	&	$495+280,4844$	&	$715+504,7314$	&	$1001	+840,10625$	\\
				\hline
				$\RootD[4]$				&	$-		$		&	$-		$		&	$46+4,294$		&	$80	+16,580$		&	$130	+44,1035$	&
				$200+96,1715$	&	$295+184,2684$	&	$420+320,4014$	&	$581	+520,5785$	\\
				\hline
			\end{tabular}
		}
	\end{center}
	\caption{The SDP parameters $(N,m)$ for \Cref{OptiMomPrimalDualRelax} depend on the root system $\Roots$ and the order $d$.}
	\label{SDPMatrixNumberSizeTable}
\end{table}
\end{remark}

\subsection{Optimizing on coefficients}

For a finite set $S\subseteq\Weights^+\setminus\{0\}$ of dominant weights, we shall be confronted in \Cref{section_chromatic} with a bilevel optimization problem, where we not only have to minimize the objective function $f$ from \Cref{eq_ObjectivePoly} with respect to $z\in\Image$, but also maximize with respect to the coefficients $c_\weight$ under some compact affine constraints. The problem can be represented as
\[
	F(S)
:=	\begin{array}[t]{rl}
	\max\limits_{c} \min\limits_{z}	&	\sum\limits_{\weight \in S} c_\weight\,T_\weight(z) \\
	\mbox{s.t.}						&	z\in\Image ,\, c\in\R^{S} ,\, b^t\,c = 1 ,\\
									&	\ell_\weight \leq c_\weight \leq u_\weight \tbox{for} \weight	\in		S
\end{array}
,
\]
where $b\in\R^S$ and $\ell_\weight \leq u_\weight\in\R$. For scalar polynomial constraints defining $\Image$, a hierarchy of SDPs to approximate $F(S)$ was introduced in \cite[Chapter 13]{lasserre09}. With our polynomial matrix constraint, the theory is similar. For $d\in\N$ large enough, that is, $T_\weight\in\filt{2d}$ whenever $\weight\in S$, we define
\[
F(S,d)
:=	\begin{array}[t]{rl}
	\sup		&	- \trace(\mathbf{A}_0\,\mathbf{X}) \\
	\mbox{s.t.}	&	\mathbf{X} \in\mathrm{Sym}^{(d)}_{\succeq 0},\, \sum\limits_{\weight\in S} \alpha_\weight\,\trace(\mathbf{A}_\weight\,\mathbf{X}) = 1,	\\
	&	\ell_\weight \leq \trace(\mathbf{A}_\weight\,\mathbf{X}) \leq u_\weight	\tbox{for} \weight	\in		S,	\\
	&	\trace(\mathbf{A}_\nu\,\mathbf{X}) = 0													\tbox{for} \nu		\notin	S\cup\{0\}
\end{array}
,
\]
where the $\mathbf{A}_0,\mathbf{A}_\weight, \mathbf{A}_\nu \in \mathrm{Sym}^{(d)}$ are the $\dim(\filt{2d})$ many matrices defined via \Cref{MomentMatrixCoeff}.

\begin{theorem}\label{thm_MaxMinConvergence}
The sequence $(F(S,d))_{d\in\N}$ is monotonously non--decreasing. If $\mathrm{QM}(\posmat)$ is Archimedean, then $\lim\limits_{d\to\infty} F(S,d) = F(S)$.
\end{theorem}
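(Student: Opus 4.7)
The plan is to recognize that $F(S,d)$ is precisely $\sup_{c\in P} f^d_{\mathrm{sos},c}$, where $P$ is the coefficient polytope $\{c\in\R^S\,\vert\, b^t c = 1,\, \ell_\weight\leq c_\weight\leq u_\weight\}$ and $f^d_{\mathrm{sos},c}$ is the order--$d$ Chebyshev SOS lower bound from \Cref{OptiProblemMomRelax} applied to the objective $f_c := \sum_{\weight\in S} c_\weight\, T_\weight$. Once this identification is made, the theorem reduces to three standard ingredients: nestedness of the truncated quadratic modules, the Hol--Scherer convergence from \Cref{thm_SosHierarchy}, and compactness of $P$.

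For the reformulation, I would invoke \Cref{prop_SDPDuality} for each fixed $c\in P$: it identifies $f^d_{\mathrm{sos},c}$ with the sup over $\mathbf{X}\in\mathrm{Sym}^{(d)}_{\succeq 0}$ of $c_0 - \trace(\mathbf{A}_0\mathbf{X})$ subject to $\trace(\mathbf{A}_\weight\mathbf{X}) = c_\weight$ for $\weight\neq 0$. Since $S\subseteq\Weights^+\setminus\{0\}$ and the Chebyshev support of $f_c$ lies in $S$, the constraints split as $\trace(\mathbf{A}_\weight\mathbf{X})=c_\weight$ for $\weight\in S$ and $\trace(\mathbf{A}_\nu\mathbf{X})=0$ for $\nu\notin S\cup\{0\}$, and $c_0=0$ makes the objective $-\trace(\mathbf{A}_0\mathbf{X})$. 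Substituting $c_\weight\mapsto \trace(\mathbf{A}_\weight\mathbf{X})$ turns the joint sup over $(c,\mathbf{X})$ with $c\in P$ into the SDP defining $F(S,d)$ (identifying the vector $\alpha$ in the statement with $b$). This proves $F(S,d) = \sup_{c\in P} f^d_{\mathrm{sos},c}$.

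Monotonicity then follows because $\filt{d}\subseteq\filt{d+1}$ implies $\mathrm{QM}(\posmat)_d\subseteq\mathrm{QM}(\posmat)_{d+1}$, hence $f^d_{\mathrm{sos},c}\leq f^{d+1}_{\mathrm{sos},c}$ for every $c$; taking the sup over $c\in P$ preserves the inequality. For convergence, the easy direction is $F(S,d) = \sup_{c\in P} f^d_{\mathrm{sos},c}\leq \sup_{c\in P} f^*_c = F(S)$, where $f^*_c := \min_{z\in\Image} f_c(z)$. For the reverse, observe that $c\mapsto f^*_c$ is the pointwise minimum over the compact set $\Image$ of a function linear in $c$, hence concave and continuous on the compact polytope $P$, so there exists $c^\star\in P$ with $f^*_{c^\star}=F(S)$. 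Since $\mathrm{QM}(\posmat)$ is Archimedean, \Cref{thm_SosHierarchy} applied to $f_{c^\star}$ yields $\lim_d f^d_{\mathrm{sos},c^\star} = f^*_{c^\star}$. Combined with $F(S,d)\geq f^d_{\mathrm{sos},c^\star}$ (since $c^\star\in P$), this gives $\liminf_d F(S,d)\geq F(S)$, and equality follows.

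The main obstacle, in my view, is the careful bookkeeping in the first step: verifying that the equality constraints $\trace(\mathbf{A}_\nu\mathbf{X})=0$ for $\nu\notin S\cup\{0\}$ correctly encode that $\mathbf{X}$ is an SOS certificate for a polynomial whose Chebyshev expansion is supported on $S\cup\{0\}$, and that the free parameter coming from the index $\weight=0$ is indeed the one tracking $\lambda$. Once that translation is secured via \Cref{prop_SDPDuality}, the rest is a \emph{max--of--approximations} argument standard in Lasserre--type hierarchies.
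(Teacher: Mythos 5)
Your proposal is correct and follows essentially the same route as the paper: both obtain $F(S,d)\leq F(S)$ by reading coefficients off a feasible $\mathbf{X}$, both use compactness of the coefficient set and continuity of $c\mapsto (f_c)^*$ to produce an optimal $c^*$, and both then invoke the Hol--Scherer Positivstellensatz (you via \Cref{thm_SosHierarchy}, the paper directly together with the construction in \Cref{prop_SDPDuality}) to certify $f_{c^*}-F(S)+\varepsilon$ at a finite order. Your explicit identification $F(S,d)=\sup_{c}f^d_{\mathrm{sos},c}$ and the resulting monotonicity argument are just a cleaner packaging of the same content.
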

\begin{proof}
The proof is analogous to the one of \cite[Theorem 13.1]{lasserre09}, but uses the Positivstellensatz of Hol and Scherer instead of Putinar's. Let $\mathbf{X}$ be optimal for $F(S,d)$ and set $c_\weight := \trace (\mathbf{A}_\weight\,\mathbf{X})$ for $\weight \in S$. Then $F(S,d) \leq (f_c)^* \leq F(S)$, where $(f_c)^*$ denotes the minimum of $f_c := \sum\limits_{\weight\in S} c_\weight\,T_\weight \in\RX$ on $\Image$.

On the other hand, $\Image=\{ z\in \R^n \,\vert\, \posmat(z) \succeq 0 \}$ is compact and the $T_\weight$ are continuous. Hence, the map $g:c \mapsto (f_c)^*$ is continuous on a compact set and there exists a feasible $c^* \in \R^S$, such that $F(S)=g(c^*)$. For any $\varepsilon > 0$, the polynomial $\sum_{\weight\in S} c^*_\weight \, T_\weight - F(S) +\varepsilon$ is strictly positive on $\Image$. Thus, by \Cref{thm_holscherer}, there exist sums of squares $q\in\mathrm{SOS}(\RX)$ and $\mathbf{Q}\in\mathrm{SOS}(\RX^n)$, such that
\[
\sum_{\weight\in S} c^*_\weight \, T_\weight - (F(S) - \varepsilon)
=	q + \trace(\posmat \, \mathbf{Q}).
\]
For $d\in\N$ sufficiently large, we can follow our proof of \Cref{prop_SDPDuality} to construct a matrix $\mathbf{X} \in \mathrm{Sym}^{(d)}_{\succeq 0}$ with $- \trace(\mathbf{A}_0\,\mathbf{X})=c^*_0$, $- \trace(\mathbf{A}_0\,\mathbf{X})=F(S)-\varepsilon$, $\trace(\mathbf{A}_\weight\,\mathbf{X})=c^*_\weight$ for $\weight\in S$ and $\trace(\mathbf{A}_\nu\,\mathbf{X}) = 0$ for $0\neq \nu\notin S$. Then $\mathbf{X}$ is feasible for $F(S,d)$, and therefore $F(S,d) \geq F(S) - \varepsilon$. Since $\varepsilon > 0$ is arbitrary, the statement follows.
\end{proof}

\subsection{A case study}
\label{sec_casestudy}

We apply the Chebyshev moment and SOS hierarchy to solve a trigonometric polynomial optimization problem with crystallographic symmetry and compare with another technique. One alternative approach is to reinforce positivity constraints on trigonometric polynomials to \textbf{sums of Hermititan squares} (SOHS), which goes back to the generalized Riesz--F\'{e}jer theorem \cite[Theorem 4.11]{dumitrescu07}. With \cite[Equation (3.71)]{dumitrescu07}, one can then approximate the minimum of a trigonometric polynomial $f\in\R[\Weights]$ by solving a semi--definite program
\begin{equation}\label{eq_DumitrescuRelaxation}
f_{\mathrm{rf}}^S
=
\begin{array}[t]{rl}
\sup		&	\lambda	\\
\mbox{s.t.}	&	f-\lambda\in\mathrm{SOHS}(S)
\end{array}
\end{equation}
as in Riesz--Fej\'{e}r, where $S\subseteq \Weights$ is a finite set of exponents containing the support of $f$ up to central symmetry. This can be translated into SDP standard form with Kronecker products of elementary Toeplitz matrices, yielding a hierarchy of lower bounds.

\begin{example}\label{example_DumitrescuComparison}
We search the global minima $f^*$, $g^*$, $h^*$ and $k^*$ of the following $\weyl$--invariant trigonometric polynomials with graphs depicted in \emph{\Cref{FigureDumitrescu}}.
\begin{enumerate}
\item Let $\Roots=\RootG[2]$, $\weyl=\mathfrak{S}_3 \ltimes \{\pm 1 \}$, $\Weights = \Z\,\fweight{1}\oplus\Z\,\fweight{2} = \Z [0,-1,1]^t \oplus \Z [-1,-1,2]^t$ and
\begin{align*}
	f(u)
:=&	\,\gencos{2\,\fweight{1}}(u) + 2\,\gencos{\fweight{2}}(u)\\
=&	\,(
	\mcos{\sprod{2\,\fweight{1},u}} + 
	\mcos{\sprod{2\,\fweight{1} - 2\,\fweight{2},u}} + 
	\mcos{\sprod{4\,\fweight{1} - 2\,\fweight{2},u}} \\
&	\, + 
	2\,\mcos{\sprod{\fweight{2},u}} + 
	2\,\mcos{\sprod{3\,\fweight{1} - \fweight{2},u}} + 
	2\,\mcos{\sprod{3\,\fweight{1} - 2\,\fweight{2},u}} 
	)/3.
\end{align*}
In the coordinates $z=\gencos{}(u)$, we have $f(z) = 6\,z_1^2 - 2\,z_1 - 1$ $($see \emph{\Cref{example_A2PolyRewrite}}$)$.

\item Let $\Roots=\RootG[2]$, $\weyl=\mathfrak{S}_3 \ltimes \{\pm 1 \}$, $\Weights = \Z\,\fweight{1}\oplus\Z\,\fweight{2} = \Z [0,-1,1]^t \oplus \Z [-1,-1,2]^t$ and
\begin{align*}
	g(u)
:=&	\,2\,\gencos{\fweight{1}}(u) + \gencos{\fweight{2}}(u) + \gencos{\fweight{1} + \fweight{2}}(u) + 4\,\gencos{3\,\fweight{1}}(u)\\
=&	\,(
	2\,\mcos{\sprod{\fweight{1},u}} + 
	2\,\mcos{\sprod{\fweight{1}-\fweight{2},u}} + 
	2\,\mcos{\sprod{2\,\fweight{1}-\fweight{2},u}}\\
	&\,+ 
	\mcos{\sprod{\fweight{2},u}} + 
	\mcos{\sprod{3\,\fweight{1}-2\,\fweight{2},u}} + 
	\mcos{\sprod{3\,\fweight{1}-\fweight{2},u}}\\
	&\,+ 
	4\,\mcos{\sprod{3\,\fweight{1},u}} + 
	4\,\mcos{\sprod{3\,\fweight{1}-3\,\fweight{2},u}} + 
	4\,\mcos{\sprod{6\,\fweight{1}-3\,\fweight{2},u}}
	)/3\\
	&\,+ (
	\mcos{\sprod{\fweight{1},u} + \sprod{\fweight{2},u}} + 
	\mcos{\sprod{\fweight{1}-2\,\fweight{2},u}} + 
	\mcos{\sprod{4\,\fweight{1}-\fweight{2},u}} \\
	&\,+\mcos{\sprod{4\,\fweight{1}-3\,\fweight{2},u}} + 
	\mcos{\sprod{5\,\fweight{1}-2\,\fweight{2},u}} + 
	\mcos{\sprod{5\,\fweight{1}-3\,\fweight{2},u}}
	)/6.
\end{align*}
In the coordinates $z=\gencos{}(u)$, we have $g(z) = 144\,z_1^3 - 6\,z_1^2 - 69\,z_1\,z_2 - 33\,z_1 - 21\,z_2 - 7$.

\item Let $\Roots=\RootC[2]$, $\weyl=\mathfrak{S}_2 \ltimes \{\pm 1 \}^2$, $\Weights = \Z\,\fweight{1}\oplus\Z\,\fweight{2} = \Z [1,0]^t \oplus \Z [1,1]^t$ and
\begin{align*}
	h(u)
:=&	\,2\,\gencos{\fweight{1}}(u) + \gencos{\fweight{2}}(u) - \gencos{2\,\fweight{2}}(u) - 3\,\gencos{\fweight{1}+\fweight{2}}(u)\\
=&	\,\mcos{\sprod{\fweight{1},u}} + 
	\mcos{\sprod{\fweight{1}-\fweight{2},u}} \\
&	\,+(
	\mcos{\sprod{\fweight{2},u}} + 
	\mcos{\sprod{2\,\fweight{1}-\fweight{2},u}} - 
	\mcos{\sprod{2\,\fweight{2},u}} - 
	\mcos{\sprod{4\,\fweight{1}-2\,\fweight{2},u}}
	)/2 \\
&	\,-3/4\,(
	\mcos{\sprod{\fweight{1}-2\,\fweight{2},u}} + 
	\mcos{\sprod{\fweight{1}+\fweight{2},u}} \\
&	\,+ \mcos{\sprod{3\,\fweight{1}-2\,\fweight{2},u}} + 
	\mcos{\sprod{3\,\fweight{1}-\fweight{2},u}}
	).
\end{align*}
In the coordinates $z=\gencos{}(u)$, we have $h(z) = 8\,z_1^2 - 6\,z_1\,z_2 - 4\,z_2^2 + 5\,z_1 - 3\,z_2 - 1$.

\item Let $\Roots=\RootC[2]$, $\weyl=\mathfrak{S}_2 \ltimes \{\pm 1 \}^2$, $\Weights = \Z\,\fweight{1}\oplus\Z\,\fweight{2} = \Z [1,0]^t \oplus \Z [1,1]^t$ and
\begin{align*}
	k(u)
:=&	\,2\,\gencos{2\fweight{1}}(u) + \gencos{2\,\fweight{2}}(u)\\
=&	\,\mcos{\sprod{2\,\fweight{1},u}} + \mcos{\sprod{2\,\fweight{1}-2\,\fweight{2},u}} + \mcos{\sprod{2\,\fweight{2},u}}/2 + \mcos{\sprod{4\,\fweight{1}-2\,\fweight{2},u}}/2 \\
&	
\end{align*}
In the coordinates $z=\gencos{}(u)$, we have $k(z) = 4\,z_2^2 - 1$.
\end{enumerate}

For $3 \leq d \leq 7$, we choose $\tilde{S}$ to be the set of all dominant weights $\weight \in \Weights^+$ with $\Cheblvl{T_{\weight}} \leq d$. In \emph{\Cref{eq_DumitrescuRelaxation}}, $S=(\tilde{S}-\tilde{S})\cap (H\setminus \{0\})$ is an admissible choice for any halfspace $H$, since $S$ contains all exponents of the objective functions up to central symmetry. In this case, we denote the optimal value by $f^d_{\mathrm{rf}}$. On the other hand, we apply the Chebyshev SOS reinforcement $f^d_{\mathrm{sos}}$ from \emph{\Cref{OptiProblemMomRelax}}, where we only need to take exponents up to Weyl group symmetry, that is, $\tilde{S}$ itself.

With the two techniques, we obtain the results in \emph{\Cref{TableDumitrescuComparison}}. $N$ denotes the matrix size and $m$ the number of constraints, depending on $d$. In practice, it is usually not possible to determine the exact minimal value. However, since we compare lower bounds, it suffices to check which bound is larger and therefore closer to the actual minimum. To solve the semi--definite programs, we rely on \emph{\textsc{Mosek}}.

\begin{figure}[H]
\begin{center}
\begin{subfigure}{.45\textwidth}
	\includegraphics[height=5.1cm]{ToeplitzCirclesG2.png}
	\caption{The graph of $f$ for $u=(u_1,u_2,-u_1-u_2)$.}
\end{subfigure}
\quad
\begin{subfigure}{.45\textwidth}
	\includegraphics[height=5.1cm]{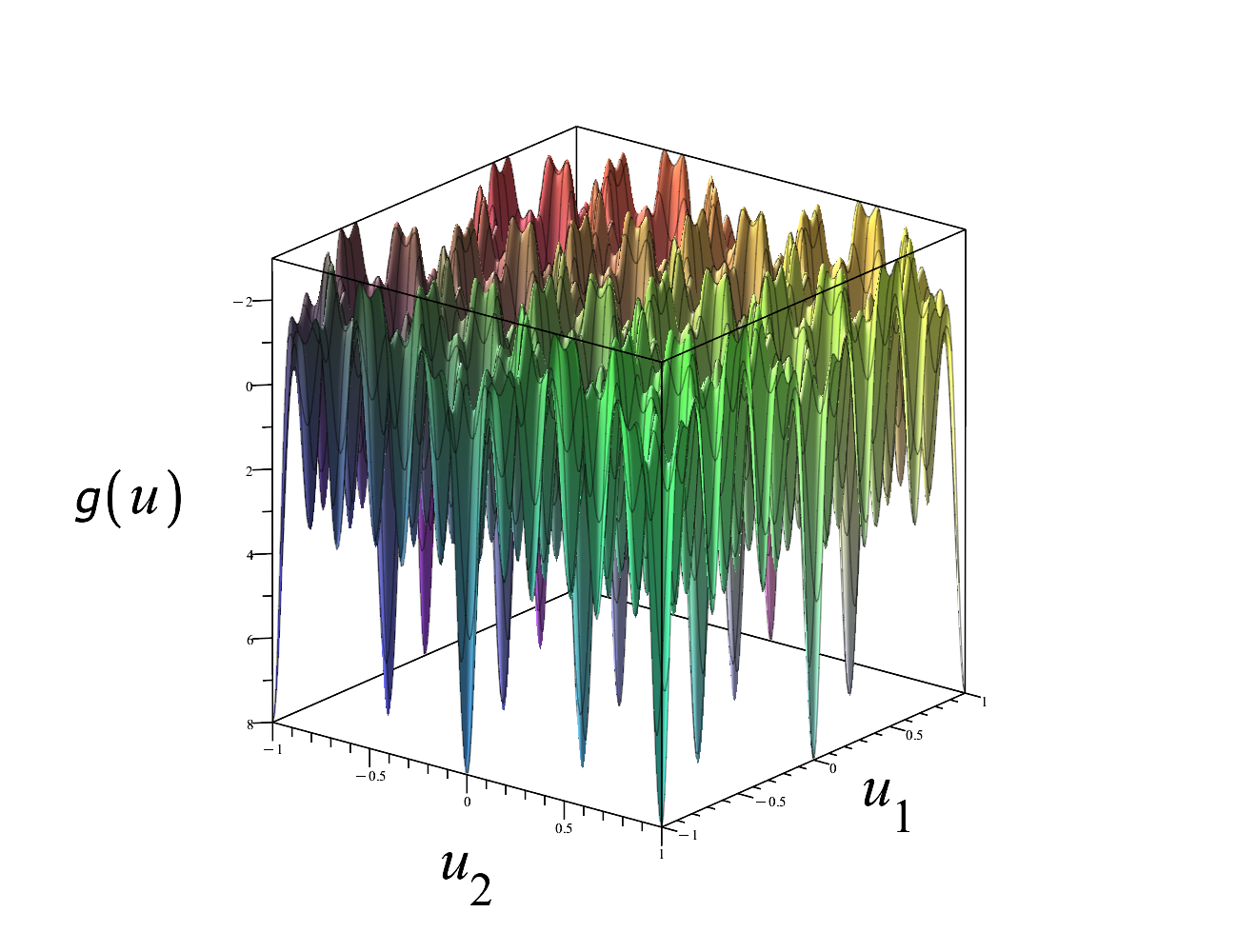}
	\caption{The graph of $g$ for $u=(u_1,u_2,-u_1-u_2)$.}
\end{subfigure}
\quad
\begin{subfigure}{.45\textwidth}
	\includegraphics[height=5.1cm]{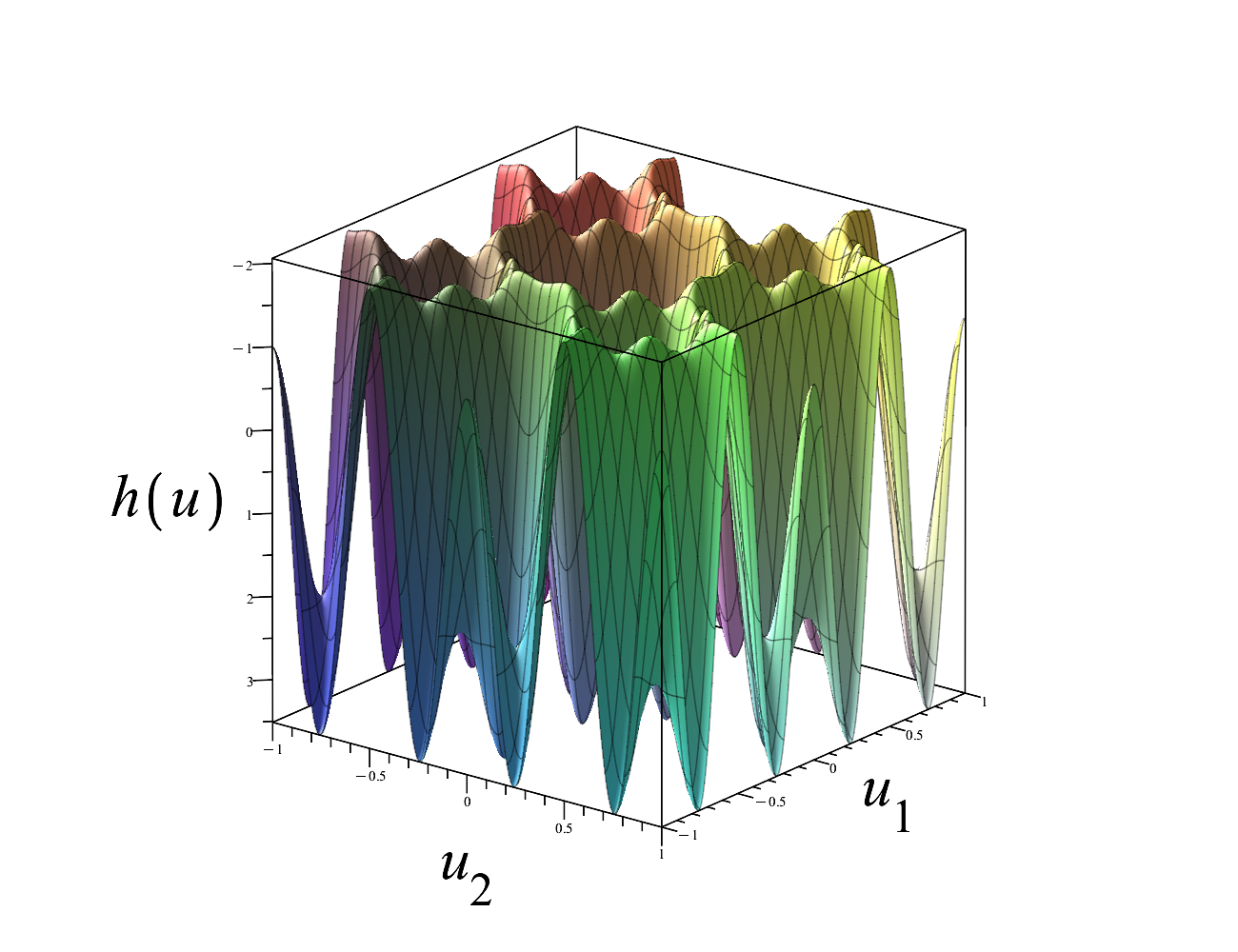}
	\caption{The graph of $h$ for $u=(u_1,u_2)$.}
\end{subfigure}
\quad
\begin{subfigure}{.45\textwidth}
	\includegraphics[height=5.1cm]{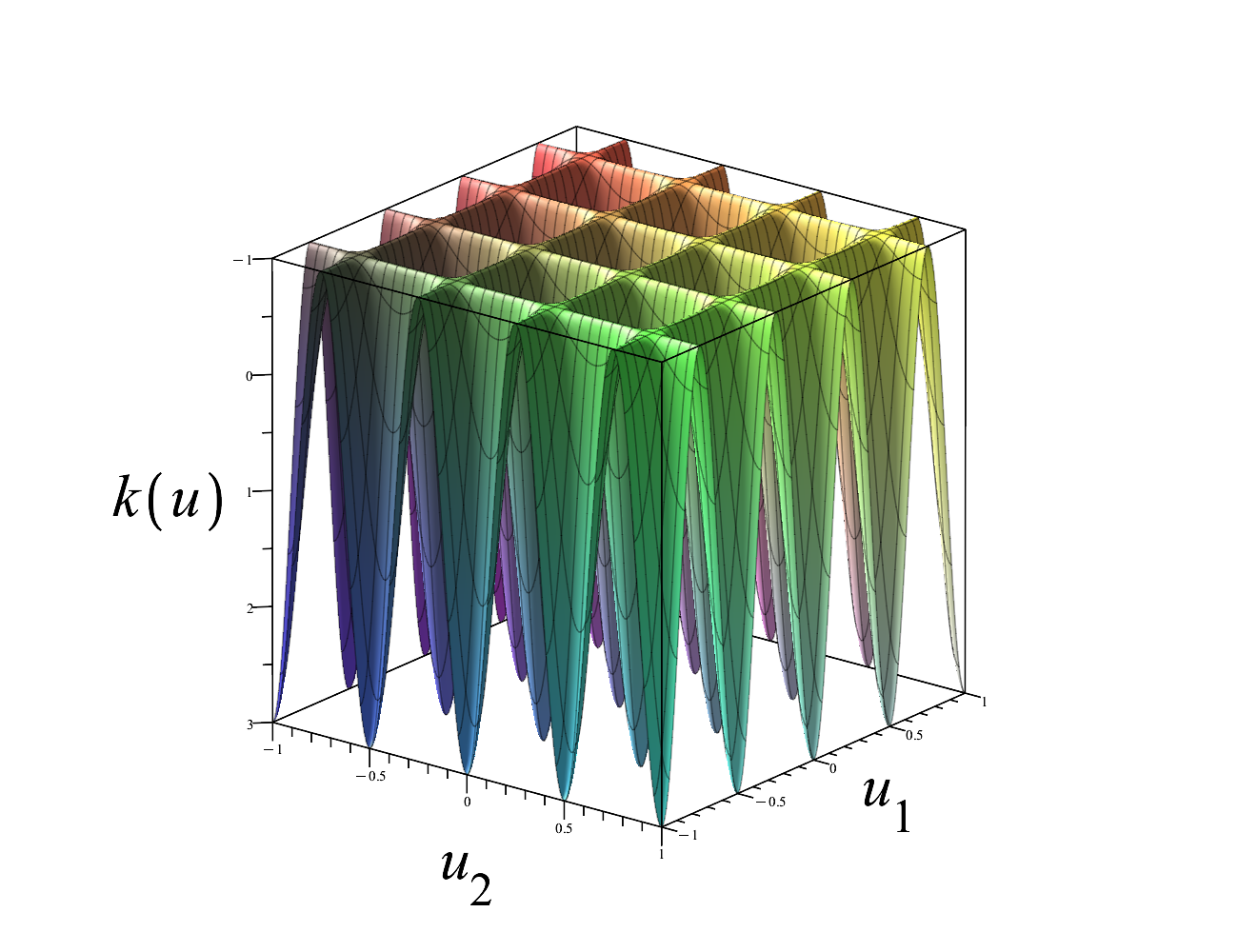}
	\caption{The graph of $k$ for $u=(u_1,u_2)$.}
\end{subfigure}
\caption{The graphs of the objective functions for $u \in \R^3 / [1,1,1]^t \cong \R^2$.}
\label{FigureDumitrescu}
\end{center}
\end{figure}

\begin{table}[H]
\begin{center}
\scalebox{0.8}{
\begin{tabular}{|c||c|c|c|c|c|}
\hline
$d$						&	$3$			&	$4$				&	$5$
						&	$6$			&	$7$					\\
\hline
\hline
$f^d_{\mathrm{rf}}$		&	$-1.18824$	&	$-1.180240$		&	$-1.17058$
&	$-1.16970$	&	$-1.16719$			\\
$N,m$				&	$49,33$		&	$81,58$			&	$121,90$
&	$169,129$	&	$225,175$		\\
\hline
$f^d_{\mathrm{sos}}$	&	$-1.16667$	&	$-1.16667$		&	$-1.16667$
&	$-1.16667$	&	$-1.16667$			\\
$N,m$					&	$9,15$		&	$15,24$			&	$24,35$
&	$34,48$		&	$47,63$				\\
\hline
\hline
$g^d_{\mathrm{rf}}$		&	$-3.50118$	&	$-3.40372$		&	$-3.31195$
						&	$-3.25383$	&	$-3.22049$			\\
$N,m$				&	$49,33$		&	$81,58$			&	$121,90$
						&	$169,129$	&	$225,175$		\\
\hline
$g^d_{\mathrm{sos}}$	&	$-3.20499$	&	$-3.10220$		&	$-2.98718$
						&	$-2.98718$	&	$-2.98718$			\\
$N,m$					&	$9,15$		&	$15,24$			&	$24,35$
						&	$34,48$		&	$47,63$				\\
\hline
\hline
$h^d_{\mathrm{rf}}$		&	$-2.12159$	&	$-2.10672$		&	$-2.1012$
						&	$-2.09959$	&	$-2.09073$			\\
$N,m$					&	$25,24$		&	$49,54$			&	$81,96$
						&	$121,150$	&	$169,217$		\\
\hline
$h^d_{\mathrm{sos}}$	&	$-2.27496$	&	$-2.06250$		&	$-2.06250$
						&	$-2.06250$	&	$-2.06250$			\\
$N,m$					&	$16,27$		&	$27,44$			&	$41,65$
						&	$58,90$		&	$78,119$			\\
\hline
\hline
$k^d_{\mathrm{rf}}$		&	$-1.00000$	&	$-1.00000$		&	$-1.00000$
						&	$-1.00000$	&	$-1.00000$			\\
$N,m$					&	$25,84$		&	$41,144$		&	$61,220$
						&	$85,312$	&	$113,420$		\\
\hline
$k^d_{\mathrm{sos}}$	&	$-1.00000$	&	$-1.00000$		&	$-1.00000$
						&	$-1.00000$	&	$-1.00000$			\\
$N,m$					&	$16,27$		&	$27,44$			&	$41,65$
						&	$58,90$		&	$78,119$			\\
\hline
\end{tabular}
}
\end{center}
\caption{Comparison of the two techniques in terms of approximation and SDP parameters $(N,m)$.}
\label{TableDumitrescuComparison}
\end{table}
\end{example}

\begin{remark}
In \emph{\Cref{TableDumitrescuComparison}}, we observe $f^*\geq f^d_{\mathrm{sos}} \geq f^d_{\mathrm{rf}}$ for $d\geq 4$. Hence, our approximation of $f^*$ appears to be better in those cases, while the parameters $N,m$ that indicate the size of the SDP are smaller $($analogous for $g,h,k)$. Differences in the quality of the approximation might depend on the stability of the SDP \emph{\cite{parrilo22}}.
\end{remark}

\section{Spectral bounds for set avoiding graphs}
\label{section_chromatic}

\setcounter{equation}{0}

In this last section, we apply our method for trigonometric optimization problems with crystallographic symmetry to the computation of spectral bounds for chromatic numbers. The chromatic number of a graph gives the minimal number of colors needed to paint the vertices, so that no edge connects two vertices of the same color. When dealing with set avoiding graphs, \cite{BdCOV} provides a lower bound, which involves minimizing the Fourier transformation of a measure.

While this bound has been used and strengthened for the graph $\R^n$ avoiding Euclidean distance $1$ \cite{Soifer09,deGrey18,BdCOV,BPT15}, it has not been widely used as a tool for polytopes. Crystallographic symmetry in the trigonometric optimization problem arises, when the polytope has Weyl group symmetry. Then we can rewrite the spectral bound in terms of generalized Chebyshev polynomials and use the results of \Cref{section_trigonometric,section_optimization}.

An advantage of our approach is that rewriting the optimization problem in terms of polynomials allows in several cases to compute bounds with simple proofs and to recover many results. In other cases, we compute numerical bounds with the modified Lasserre hierarchy from \Cref{section_optimization}. Our approach allows to study the quality of the spectral bound and to estimate the optimal involved measure, see \Cref{B3L1NormCoefficients}.

\subsection{Computing spectral bounds with Chebyshev polynomials}

Let $V\leq\R^n$ be an Abelian group and $S\subseteq V$ be bounded, centrally--symmetric with $0\notin\overline{S}$. We consider the \textbf{set avoiding graph} $G(V,S)$, where $V$ is the set of vertices and two vertices $u,v\in V$ are connected by an edge if and only if $u - v\in S$. In this context, we call $S$ the \textbf{avoided set}.%\footnote{Sometimes $G(V,S)$ is called a Cayley graph with connection set $S$, see for example \cite{vallentin14}. Since we work specifically with Abelian groups and have a geometric interpretation, we choose the name set avoiding graph.}.

A set of vertices $I\subseteq V$ is called \textbf{independent} for $G(V,S)$, if no pair of vertices in $I$ are connected by an edge, that is, for all $u,v\in I$, we have $u-v\notin S$. A \textbf{measurable coloring} $X$ of $G(V,S)$ is a partition of $V$ in independent Lebesgue--measurable sets. The \textbf{measurable chromatic number} of $G(V,S)$ is
\[
	\chi_m(V,S)
:=	\inf\{ \nops{X} \,\vert\, X \mbox{ is a measurable coloring of } V \}.
\]

\subsubsection{The spectral bound}

In \cite{BdCOV}, Bachoc, Decorte, de Oliveira Filho and Vallentin generalized the Hoffman \cite{Hoffman1970} and Lovasz \cite{Lovasz1979} bounds for finite graphs to the case $V=\R^n$, using the framework of bounded self--adjoint operators. Showing that the result holds for any set avoiding graph $G(V,S)$ is a straightforward adaptation of \cite[\S 5.1]{DMMV}.

\begin{theorem}\label{thm_spectral_bound}
\emph{\cite[\S 3.1]{BdCOV}}
Let $\measure$ be a finite Borel measure supported on $S$ with Fourier transformation
\[
\widehat{\measure}(u)
=	\int_{S} \mexp{u,v} \, \mathrm{d}\measure (v).
\]
Then the measurable chromatic number of $G(V,S)$ satisfies
\[
\chi_m(V,S)
\geq	1-\frac{\sup\limits_{u\in\R^n}\widehat{\measure}(u)}{\inf\limits_{u\in\R^n}\widehat{\measure}(u)}.
\]
\end{theorem}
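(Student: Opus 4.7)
The plan is to adapt the classical Hoffman--Lov\'asz spectral bound for finite graphs, replacing the adjacency matrix by the bounded self-adjoint convolution operator $T_\measure : f \mapsto f * \measure$ on an appropriate Hilbert space of functions on $V$. Since $S$ is centrally symmetric, one may symmetrize $\measure$ without loss of generality, making $\widehat{\measure}$ real-valued; the Fourier transform then diagonalizes $T_\measure$, identifying its spectrum with the essential range of $\widehat{\measure}$.

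Two identities are central. First, for any measurable $C \subseteq V$,
\[
\langle \mathbf{1}_C,\, T_\measure \mathbf{1}_C \rangle = \int_V \int_V \mathbf{1}_C(u)\, \mathbf{1}_C(v)\, d\measure(u-v)\, du,
\]
and if $C$ is independent for $G(V, S)$ the integrand vanishes identically, since $\measure$ is supported on $S$ while $u - v \notin S$ whenever $u, v \in C$. Second, the constant $\mathbf{1}_V$ is a (generalized) eigenvector of $T_\measure$ with eigenvalue $\widehat{\measure}(0)$; for a positive measure $\measure$ this coincides with $M := \sup_u \widehat{\measure}(u)$ by the elementary estimate $|\widehat{\measure}(u)| \leq \widehat{\measure}(0)$.

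Given a measurable coloring $\{C_1, \ldots, C_k\}$ of $V$, I would decompose each indicator as $\mathbf{1}_{C_i} = \delta_i \mathbf{1}_V + y_i$, where $\delta_i$ is the density of $C_i$ and $y_i$ is orthogonal to the constant. Expanding $\langle \mathbf{1}_{C_i}, T_\measure \mathbf{1}_{C_i}\rangle = 0$ yields the identity $\langle y_i, T_\measure y_i\rangle = -\delta_i^2\, M\, \mathrm{vol}(V)$. On the other hand, the spectral theorem applied to $y_i$ gives $\langle y_i, T_\measure y_i\rangle \geq m\, \|y_i\|^2 = m\, \delta_i(1 - \delta_i)\, \mathrm{vol}(V)$, where $m := \inf_u \widehat{\measure}(u)$. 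Combining the two expressions produces $\delta_i \leq -m / (M - m)$; summing over $i$ and using $\sum_i \delta_i = 1$ delivers $k \geq 1 - M/m$, as claimed.

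The principal technical obstacle is that for non-compact, non-discrete $V$ (e.g.\ $V = \R^n$) the constant $\mathbf{1}_V$ is not in $L^2(V)$ and densities $\delta_i$ are not literal averages. The standard remedy is an averaging argument of F\o{}lner type: replace $\mathbf{1}_V$ by $\mathbf{1}_B$ for a large ball $B \subseteq V$, run the preceding argument in $L^2(B)$, and pass to the limit $B \to V$, with boundary contributions controlled by the compact support of $\measure$. The cleaner alternative, taken in \cite{BdCOV} and straightforwardly extended in \cite[\S 5.1]{DMMV}, applies the spectral theorem for bounded self-adjoint operators directly and avoids Fourier inversion on $V$ entirely; in every case only the extreme values $M$ and $m$ enter the final inequality.
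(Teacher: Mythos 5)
The paper does not prove this statement: it is quoted from \cite[\S 3.1]{BdCOV} with the remark that the extension to a general set avoiding graph $G(V,S)$ is a straightforward adaptation of \cite[\S 5.1]{DMMV}. Your reconstruction is the standard Hoffman-type argument that those references carry out, and the algebra is correct: independence of a color class $C_i$ forces $\langle \mathbf{1}_{C_i}, T_\measure \mathbf{1}_{C_i}\rangle = 0$, the decomposition $\mathbf{1}_{C_i} = \delta_i \mathbf{1}_V + y_i$ (with vanishing cross terms, since $T_\measure$ fixes the constants' line and is self-adjoint) gives $\langle y_i, T_\measure y_i\rangle = -\delta_i^2 M \,\mathrm{vol}(V)$, the spectral lower bound $\langle y_i, T_\measure y_i\rangle \geq m\,\delta_i(1-\delta_i)\,\mathrm{vol}(V)$ yields $\delta_i \leq -m/(M-m)$, and summing gives $k \geq 1 - M/m$. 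Two small points: the identity $M = \widehat{\measure}(0) = \sup_u \widehat{\measure}(u)$ uses that $\measure$ is nonnegative, which is indeed what ``finite Borel measure'' means here; and the identification of $\inf_u \widehat{\measure}(u)$ with the bottom of the spectrum of $T_\measure$ uses continuity of $\widehat{\measure}$, which holds because $\measure$ is finite with bounded support.

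The one caveat is that the difficulty you flag in your last paragraph is not a side issue but the actual content of the cited proof: on $V=\R^n$ (or an infinite lattice) the constant function is not square-integrable, and a measurable color class need not possess a density at all, so $\delta_i$ must be replaced by an upper density along a subsequence of growing boxes, with the independence identity and the spectral bound re-established in each truncation before passing to the limit. Your F\o lner-type sketch is the right remedy and is what \cite{BdCOV} essentially do via bounded self-adjoint operators, but as written the proposal asserts rather than executes this step. Since the paper itself also defers this to the literature, your proposal is at the same level of rigor as the paper's treatment and is consistent with it.
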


The problem of computing the measurable chromatic number of $G(V,S)$ gained fame after Hardwiger and Nelson in 1950 studied the case $V=\R^2$ and $S=\mathbb{S}^1$, the Euclidean unit sphere, which remains unsolved. Current bounds and the history of the problem can be found in \cite{Soifer09} and \cite{deGrey18}.

For $V=\R^n$ and $S=\mathbb{S}^{n-1}$ the Euclidean unit sphere, the bounds obtained from \Cref{thm_spectral_bound} have been computed for $\chi_m(\R^n,\mathbb{S}^{n-1})$, see for example \cite{BPT15}. In this case, the optimal measure is the surface measure on $\mathbb{S}^{n-1}$. Beyond the spectral bound, the computation of $\chi_m(\R^n,\mathbb{S}^{n-1})$ has been studied in \cite{BPS21,ambrus22,GM22}.

\subsubsection{Reformulation in terms of Chebyshev polynomials}

For a root system $\Roots$ in $\R^n$ with Weyl group $\weyl$ and weight lattice $\Weights$, we consider those $S\subseteq V$ with Weyl group symmetry, that is $\weyl\, S = S$. The $\weyl$--invariant trigonometric polynomials $\R[\Weights]^\weyl$ with support in $S$ are the Fourier transformations of atomic $\weyl$--invariant Borel measures supported on $\Weights\cap S$. We treat the optimization problem in \Cref{thm_spectral_bound} for this class of measures with the theory developped in \Cref{section_optimization}. In fact, by an averaging argument on all orbits, we see that an optimal measure for \Cref{thm_spectral_bound} is obtained from such a $\weyl$--invariant trigonometric polynomial. We denote by 
\[
\Image
=	\{ \gencos{}(u) \,\vert\, u\in\R^n \}
=	\{ z\in\R^n \,\vert\, \posmat(z) \succeq 0 \}
\]
the image of the generalized cosines and define
\begin{equation}
F(S)
:=	\begin{array}[t]{rl}
	\max\limits_{c} \min\limits_{z}	&	\sum\limits_{\weight \in S\cap\Weights^+} c_\weight\,T_\weight(z) \\
	\mbox{s.t.}						&	z\in\Image ,\, c\in\R^{S\cap\Weights^+}_{\geq 0} ,\, \sum\limits_{\weight\in S\cap\Weights^+} c_\weight = 1 
	\end{array}
.
\end{equation}

\begin{theorem}\label{thm_ChromaticChebyshevBound}
Let $\weyl\,S=S$ and $S\cap\Weights\neq\emptyset$. The measurable chromatic number of $G(V,S)$ satisfies
\[
		\chi_m(V,S)
\geq	1-\frac{1}{F(S)}.
\]
\end{theorem}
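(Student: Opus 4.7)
The plan is to apply the spectral bound of \Cref{thm_spectral_bound} to an explicit family of atomic positive Borel measures on $S$ built from each admissible coefficient vector $c$, and then to reorganize the resulting inequality in terms of the Chebyshev data defining $F(S)$.

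For each admissible $c = (c_\weight)_{\weight \in S \cap \Weights^+}$ with $c_\weight \geq 0$ and $\sum_\weight c_\weight = 1$, I would construct a $\weyl \cup \{\pm I_n\}$--invariant atomic measure supported on $S \cap \Weights$ by spreading mass $c_\weight$ uniformly over the joint orbit $\tilde{O}_\weight := \weyl\weight \cup (-\weyl\weight) \subseteq S$. Since $\weyl S = S$ and $S = -S$, each such orbit stays inside $S$, so the resulting measure $\measure_c$ is a positive finite Borel measure on $S$ of total mass one, symmetric under both $\weyl$ and $\{\pm I_n\}$. A direct computation using \Cref{LemmaComplexConjugatePoly} and \Cref{convention_real} identifies its Fourier transform with
\[
\widehat{\measure_c}(u) \;=\; \sum_{\weight \in S \cap \Weights^+} c_\weight\, T_\weight(\gencos{}(u)),
\]
a real--valued $\weyl \cup \{\pm I_n\}$--invariant trigonometric polynomial.

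Positivity and central symmetry of $\measure_c$ then give $\sup_u \widehat{\measure_c}(u) = \widehat{\measure_c}(0) = \sum c_\weight = 1$, while the surjection $\gencos{}\colon\R^n \to \Image$ and compactness of $\Image$ yield $\inf_u \widehat{\measure_c}(u) = \min_{z \in \Image} \sum_\weight c_\weight T_\weight(z)$. Substituting into \Cref{thm_spectral_bound} produces
\[
\chi_m(V, S) \;\geq\; 1 - \frac{1}{\min_{z \in \Image} \sum_{\weight \in S \cap \Weights^+} c_\weight\, T_\weight(z)},
\]
and taking the supremum over admissible $c$, together with the monotonicity of $x \mapsto 1 - 1/x$ on the range $x < 0$ where the bound is nontrivial, produces the claimed inequality $\chi_m(V, S) \geq 1 - 1/F(S)$.

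The main obstacle is the technical bookkeeping when $-I_n \notin \weyl$ (cases $\RootA$, $\RootD[n]$ with $n$ odd, $\RootE[6]$). Here \Cref{convention_real} replaces each pair $\{\weight, \conj{\weight}\}$ with $\weight \neq \conj{\weight}$ by a real--part polynomial $\TT_\weight$ and an imaginary--part polynomial $\TT_{\conj{\weight}}$ of $\gencos{\R}$, and the latter is $\{\pm I_n\}$--antisymmetric, hence not the Fourier transform of a positive symmetric measure. To make the identification of $\widehat{\measure_c}$ above consistent, one has to either restrict the supremum defining $F(S)$ to coefficient vectors vanishing on the imaginary--part indices, observing that these contributions can only lower the inner minimum (an $\{\pm I_n\}$--odd function added to a symmetric one can only decrease its minimum over $\R^n$) and so do not improve the resulting lower bound, or to combine $c_\weight$ and $c_{\conj{\weight}}$ into a single mass on the joint orbit $\tilde{O}_\weight$; in either route the Fourier--transform identity is recovered and the spectral bound applies as in the $-I_n\in\weyl$ case.
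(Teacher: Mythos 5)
Your proposal follows essentially the same route as the paper: build the $\weyl$--invariant atomic measure on $S\cap\Weights$ with orbit--uniform weights $c_\weight$, identify its Fourier transform with $\sum_{\weight\in S\cap\Weights^+} c_\weight\,T_\weight(\gencos{}(u))$, note that the supremum equals $\sum c_\weight = 1$ attained at $u=0$, apply \Cref{thm_spectral_bound}, and optimize over $c$. The paper encodes the central symmetry via the constraint $c_\weight=c_{-\weight}$ rather than your explicit joint--orbit bookkeeping for $-I_n\notin\weyl$, but this is only a presentational difference.
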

\begin{proof}
Since $S$ is bounded, the nonempty set $S\cap\Weights$ is finite. We consider the atomic Borel measure
\[
	\measure
=	\sum\limits_{\weight\in S\cap\Weights} \frac{c_\weight}{\nops{\weyl\weight}} \, \delta_\weight
\]
with $\delta_\weight$ Dirac and $0\leq c_\weight = c_{-\weight}\in\R$, such that, for all $A\in\weyl$, $c_{A\weight}=c_\weight$. Then the Fourier transformation is
\begin{align*}
	\widehat{\measure}(u)
	=	\int_{S} \mexp{u,v} \, \mathrm{d}\measure (v)
	=	\sum\limits_{\weight \in S\cap\Weights}
	\frac{c_\weight}{\nops{\weyl\weight}} \, \mexp{\weight,u} 
	=	\sum\limits_{\weight \in S\cap\Weights^+}
	c_\weight \, \gencos{\weight}(u)
	=	\sum\limits_{\weight \in S\cap\Weights^+}
	c_\weight \, T_\weight(\gencos{}(u)).
\end{align*}
In particular, we have
\begin{align*}
		\widehat{\measure}(u)
\leq	\sum\limits_{\weight \in S\cap\Weights} \frac{c_\weight}{\nops{\weyl\weight}}
=		\sum\limits_{\weight \in S\cap\Weights^+} c_\weight
\end{align*}
and equality holds for $u=0$. Optimizing over the coefficients $c$ under the condition $\sum_{\weight} c_\weight = 1$ and using \Cref{coro_OptiProbelmRewrite} with \Cref{thm_spectral_bound} gives the lower bound $1-1/F(S)$ for $\chi_m(V,S)$.
\end{proof}

In practice, the problem of computing $F(S)$ analytically is not always possible. Instead we can use the theory of \Cref{section_optimization} to lower bound it numerically. For $d \in \N$ sufficiently large, we consider the SDP
\begin{equation}
F(S,d)
:=	\begin{array}[t]{rl}
\sup		&	- \trace(\mathbf{A}_0\,\mathbf{X}) \\
\mbox{s.t.}	&	\mathbf{X} \in\mathrm{Sym}^{(d)}_{\succeq 0},\, \sum\limits_{\weight\in S\cap\Weights^+} \trace(\mathbf{A}_\weight\,\mathbf{X}) = 1,	\\
&	\trace(\mathbf{A}_\weight\,\mathbf{X})	\geq	0 \tbox{for} \weight	\in	S\cap\Weights^+,	\\
&	\trace(\mathbf{A}_\nu\,\mathbf{X})		=		0 \tbox{for} \nu		\in	\Weights^+\setminus (S\cup\{0\})
\end{array}
,
\end{equation}
where the semi--definite cone $\mathrm{Sym}^{(d)}_{\succeq 0}$ and the finitely many matrices $\mathbf{A}_0,\mathbf{A}_\weight,\mathbf{A}_\nu \in \mathrm{Sym}^{(d)}_{\succeq 0}$ are defined as in \Cref{MomentMatrixCoeff}.

\begin{corollary}\label{coro_ChromBoundLasserre}
\emph{[of \Cref{thm_MaxMinConvergence,thm_ChromaticChebyshevBound}]} Let $\weyl\,S=S$ and $S\cap\Weights\neq\emptyset$. The sequence $(F(S,d))_{d\in\N}$ is monotonously non--decreasing and we have
\[
\chi_m(V,S)
\geq	1-\frac{1}{F(S,d)}.
\]
Furthermore, if $\mathrm{QM}(\posmat)$ is Archimedean, then $\lim\limits_{d\to \infty} F(S,d) = F(S)$.
\end{corollary}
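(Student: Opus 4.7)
The plan is to realize the corollary as a specialization of \Cref{thm_MaxMinConvergence} combined with \Cref{thm_ChromaticChebyshevBound}. First I would observe that the $F(S,d)$ and $F(S)$ appearing here are exactly the instances of the bilevel programs defined in Section~3.3, obtained by taking $b$ to be the all-ones vector indexed by $S \cap \Weights^+$, all $\ell_\weight = 0$, and all $u_\weight = 1$. The constraint $c_\weight \leq 1$ is already forced by the combination of $c_\weight \geq 0$ and the normalization $\sum_\weight c_\weight = 1$, so its inclusion is cosmetic; its purpose is to place the problem literally within the hypotheses of \Cref{thm_MaxMinConvergence}, which requires a compact coefficient box.

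Under this identification, the monotonicity of $(F(S,d))_{d\in\N}$ and, under the Archimedean hypothesis on $\mathrm{QM}(\posmat)$, the convergence $\lim_{d\to\infty} F(S,d) = F(S)$ are immediate from \Cref{thm_MaxMinConvergence}. Monotonicity can also be seen directly: the inclusion $\filt{d} \subseteq \filt{d+1}$ induces an embedding $\mathrm{Sym}^{(d)}_{\succeq 0} \hookrightarrow \mathrm{Sym}^{(d+1)}_{\succeq 0}$ (by padding with zero blocks) which preserves each trace functional $\trace(\mathbf{A}_\weight\,\cdot)$, so every matrix feasible at level $d$ lifts to a feasible matrix at level $d+1$ with unchanged objective value.

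For the chromatic inequality, I would extract from the proof of \Cref{thm_MaxMinConvergence} the weak bound $F(S,d) \leq F(S)$, which holds unconditionally of any Archimedean assumption: for $\mathbf{X}$ optimal at level $d$, the choice $c_\weight := \trace(\mathbf{A}_\weight\,\mathbf{X})$ is feasible for $F(S)$, and the moment construction forces $F(S,d) \leq (f_c)^* \leq F(S)$. The map $x \mapsto 1 - 1/x$ is monotone increasing on each branch of $\R\setminus\{0\}$, so in the nontrivial regime $F(S,d) \leq F(S) < 0$ the inequality $F(S,d) \leq F(S)$ combined with \Cref{thm_ChromaticChebyshevBound} yields
\[
\chi_m(V,S) \;\geq\; 1 - \frac{1}{F(S)} \;\geq\; 1 - \frac{1}{F(S,d)},
\]
while in any other regime one has $1 - 1/F(S,d) \leq 1 \leq \chi_m(V,S)$ trivially. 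The argument is essentially a translation of hypotheses, and no step presents a genuine obstacle—the substantive content is entirely carried by the two referenced theorems; what remains is verifying the dictionary and the sign bookkeeping.
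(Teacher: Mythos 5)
Your proposal is correct and follows exactly the route the paper intends: the paper states this as an immediate corollary of \Cref{thm_MaxMinConvergence} and \Cref{thm_ChromaticChebyshevBound} without writing out a proof, and your identification of $F(S)$, $F(S,d)$ with the bilevel programs of Section~3.3 (via $b=\mathbf{1}$, $\ell_\weight=0$, $u_\weight=1$), together with the unconditional bound $F(S,d)\leq F(S)<0$ extracted from the proof of \Cref{thm_MaxMinConvergence} and the sign bookkeeping for $x\mapsto 1-1/x$ on the negative axis, is precisely the missing dictionary. No gaps.
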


\subsection{The chromatic number of a coroot lattice}

For an $n$--dimensional lattice $V=\Corootlattice$ in $\R^n$, we call $\lambda \in \Corootlattice \setminus\{0\}$ a \textbf{strict Vorono\"{i} vector}, if the intersection $(\lambda + \Vor(\Corootlattice)) \cap \Vor(\Corootlattice)$ is a facet of $\Vor(\Corootlattice)$, that is, a face of dimension $n-1$ of the Vorono\"{i} cell. In this case, a natural choice for the avoided set $S$ is the set of all strict Vorono\"{i} vectors of $\Corootlattice$. The chromatic number $\chi(\Corootlattice)$ of the lattice $\Corootlattice$ is defined as the chromatic number of the graph $G(\Corootlattice) := G(\Corootlattice, S)$. 

\begin{figure}[H]
	\begin{center}
		\includegraphics[height=4cm]{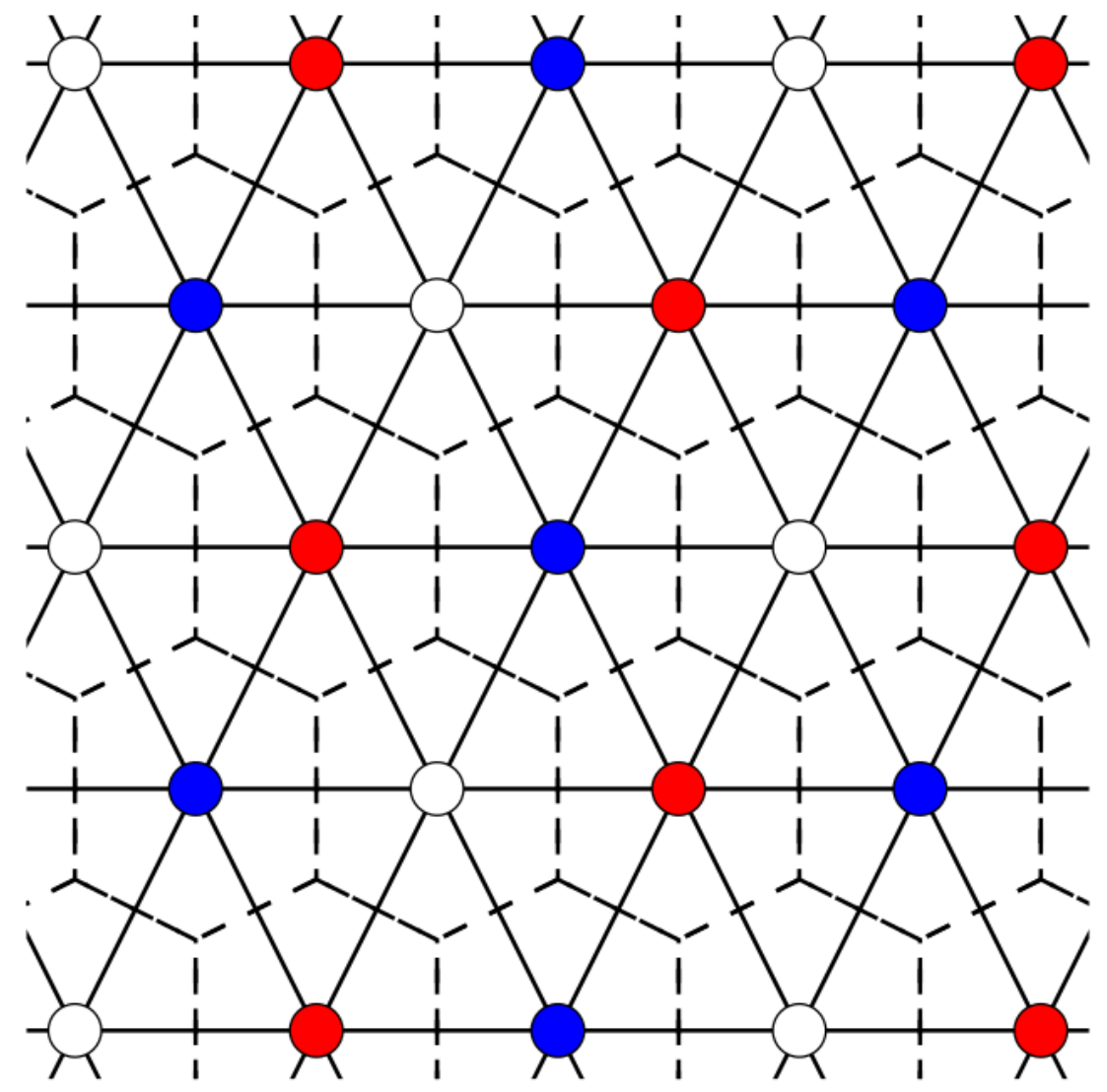}
	\end{center}
	\caption{The chromatic number of the $\RootA[2]$ coroot lattice is $\chi(\Corootlattice(\RootA[2])) = 3 $.}\label{fig_ChiLambda}
\end{figure}

The chromatic number of several instances of these graphs was computed in \cite{DMMV}, some of them through the spectral bound from \Cref{thm_spectral_bound}. In this subsection, we reprove the bounds for the case, where $\Corootlattice$ is the coroot lattice of an irreducible root system.

\begin{proposition}\label{prop_StrictVoronoiHighestRoot}
Assume that $\Corootlattice$ is the coroot lattice of an irreducible root system $\Roots$ with highest root $\highestroot$. Then the set of strict Vorono\"{i} vectors of $\Corootlattice$ is the orbit $S = \weyl \highestroot^\vee$.
\end{proposition}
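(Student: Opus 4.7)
The plan is to combine the identification $\Vor(\Corootlattice) = \weyl\,\fundom$ with the explicit simplicial description of $\fundom$ from Proposition prop_FundomAffineWeyl. I first record the basic one-to-one correspondence between facets of the Voronoï cell and strict Voronoï vectors: to $\lambda \in \Corootlattice\setminus\{0\}$ one associates its perpendicular bisector $H_\lambda := \{u \in \R^n : \langle u, \lambda \rangle = \|\lambda\|^2/2\}$, and $\lambda$ is a strict Voronoï vector exactly when $H_\lambda \cap \Vor(\Corootlattice)$ is $(n-1)$-dimensional. Since $H_\lambda$ determines $\lambda$ uniquely (as the reflection of $0$ across $H_\lambda$), it suffices to characterize which hyperplanes $H_\lambda$ support a facet of $\Vor(\Corootlattice)$.

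Next, I would identify the unique "outward" facet of the alcove. By Proposition prop_FundomAffineWeyl, $\fundom$ is an $n$-simplex with $0$ as a vertex; its $n+1$ facets split into the $n$ facets through $0$, lying on the reflecting hyperplanes $\{u : \langle u, \roots_i \rangle = 0\}$ of the simple roots, and the single facet $F := \{u \in \fundom : \langle u, \highestroot\rangle = 1\}$ opposite to $0$. Using $\highestroot^\vee = 2\,\highestroot / \langle\highestroot,\highestroot\rangle$, a direct computation gives
\[
\{u \in \R^n : \langle u, \highestroot\rangle = 1\} \;=\; \{u \in \R^n : \langle u, \highestroot^\vee\rangle = \|\highestroot^\vee\|^2/2\} \;=\; H_{\highestroot^\vee},
\]
so $F$ lies on the perpendicular bisector of $0$ and $\highestroot^\vee \in \Corootlattice$. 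Since $A$ is orthogonal, the same calculation applied to $A\,\fundom$ shows $AF \subset H_{A\,\highestroot^\vee}$ for every $A \in \weyl$.

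Then I would argue that the boundary of the Voronoï cell is entirely covered by the $\weyl$-translates of $F$. Because $0$ lies in the interior of $\Vor(\Corootlattice)$, any facet of $\fundom$ that passes through $0$ (i.e.\ lies on a reflecting hyperplane) cannot contribute to $\partial \Vor(\Corootlattice)$: the reflection $s_{\roots_i}$ across such a hyperplane sends $\fundom$ to an adjacent alcove still contained in $\weyl\,\fundom = \Vor(\Corootlattice)$, so that facet is sandwiched between two alcoves inside the cell and thus interior. Consequently
\[
\partial\Vor(\Corootlattice) \;\subseteq\; \bigcup_{A \in \weyl} A F \;\subseteq\; \bigcup_{A \in \weyl} H_{A\,\highestroot^\vee}.
\]

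Finally I would draw the two conclusions. For each $A \in \weyl$, the set $AF$ is $(n-1)$-dimensional and lies on $H_{A\,\highestroot^\vee} \cap \Vor(\Corootlattice)$, so $A\,\highestroot^\vee$ is a strict Voronoï vector; conversely, any strict Voronoï vector $\lambda$ produces a facet on $H_\lambda$, which by the inclusion above must coincide with some $H_{A\,\highestroot^\vee}$, forcing $\lambda = A\,\highestroot^\vee$. Hence the set of strict Voronoï vectors is exactly $\weyl\,\highestroot^\vee$. The main obstacle is the rigorous justification of the middle step — proving that every facet of $\fundom$ through the origin is absorbed into the interior of $\Vor(\Corootlattice)$ — but this follows cleanly from the reflection-across-the-wall argument together with convexity of the Voronoï cell.
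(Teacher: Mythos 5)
Your proof is correct, but it takes a genuinely different route from the paper. The paper's proof is algebraic and citation-based: it invokes Bourbaki's dichotomy of root lengths to show that $\highestroot^\vee$ is a short root of the coroot system $\Roots^\vee$, and then quotes Conway--Sloane for the fact that the strict Vorono\"{i} vectors of a root lattice are precisely the short roots of the generating root system, concluding via $\weyl(\Roots)=\weyl(\Roots^\vee)$. You instead argue geometrically from the alcove decomposition $\Vor(\Corootlattice)=\weyl\,\fundom$ and the simplicial description of $\fundom$ in \Cref{prop_FundomAffineWeyl}: the wall facets through $0$ are absorbed into the interior of the cell by the reflection argument, so the boundary is covered by the $\weyl$--translates of the outer facet $F$, each of which lies on the bisector $H_{A\highestroot^\vee}$ and is $(n-1)$--dimensional; the identification $\{\sprod{u,\highestroot}=1\}=H_{\highestroot^\vee}$ is the computational heart and is verified correctly. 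Your approach is self-contained modulo facts already stated in the paper and in effect re-proves the Conway--Sloane statement in this special case, while also exhibiting explicitly the facet of $\Vor(\Corootlattice)$ attached to each strict Vorono\"{i} vector; the paper's proof is shorter but leans on two external references. Two small points to tighten: the sandwiching argument must be applied to the walls of \emph{every} alcove $A\,\fundom$, not just $\fundom$ itself (immediate, since $A\,\fundom\cap A\,s_{\roots_i}\fundom=A(F_i)$ with both alcoves inside $\Vor(\Corootlattice)$), and the inclusion $\partial\Vor(\Corootlattice)\subseteq\bigcup_A AF$ is literally obtained only after noting that the leftover relative boundaries of the wall facets have dimension at most $n-2$, which is all the final dimension-count needs anyway.
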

\begin{proof}
By \cite[Chapitre VI, \S 1, Proposition 11 et 12]{bourbaki456}, there are at most two distinct root lengths and two roots have the same length if and only if they are in the same $\weyl$--orbit. If $\roots \in\Roots$, then $\sprod{ \highestroot , \highestroot } \geq \sprod{ \roots , \roots }$ and so
\[
\sprod{ \highestroot^\vee , \highestroot^\vee }
=		\frac{4}{\sprod{ \highestroot , \highestroot }}
\leq	\frac{4}{\sprod{ \roots , \roots }}
=		\sprod{ \roots^\vee , \roots^\vee }.
\]
Thus, $\highestroot^\vee$ is a short root of the coroot system $\Roots^\vee$. The lattice generated by $\Roots^\vee$ is $\Corootlattice$ and, by the discussion before \cite[Chapter 21, Theorem 8]{conway1988a}, the short roots $\weyl(\Roots^\vee)\highestroot^\vee$ are the strict Vorono\"{i} vectors. As $\weyl(\Roots)=\weyl(\Roots^\vee)$, the statement follows.
\end{proof}

If $\highestroot^\vee\in\Weights$, we obtain
\begin{equation}\label{eq_LatticeChebyshevBound}
		\chi(\Corootlattice)
\geq	1-\frac{1}{\min\limits_{z\in\Image} T_{\highestroot^\vee}(z)}.
\end{equation}
Indeed, since the strict Vorono\"{i} vectors form a single $\weyl$--orbit, there is no freedom for the coefficients in \Cref{thm_ChromaticChebyshevBound} and we are left with minimizing with respect to $z\in\Image$.

If $\highestroot^\vee\notin\Weights$, we can replace $T_{\highestroot^\vee}$ by $T_\weight$ with $\weight=\ell\highestroot^\vee\in\Weights$ for some $\ell>0$, because $\R^n$ is invariant under scaling. For example, this is the case for $\RootG[2]$, where $\highestroot^\vee = \highestroot/3 = \fweight{2}/3$ (and this is the only exception for the irreducible root systems). However, since the coroot lattice of $\RootG[2]$ is the hexagonal one from \Cref{fig_ChiLambda}, this case is covered by $\RootA[2]$.

We now reprove the bounds from \cite{DMMV}.

\begin{theorem}\label{thm_LatticeChromatic}
The following statements hold.
\begin{enumerate}
\item The spectral bound is sharp for $\chi(\Corootlattice(\RootC)) = 2$.
\item The spectral bound is sharp for $\chi(\Corootlattice(\RootA)) = n$.
\item We have $\chi(\Corootlattice(\RootB)) = \chi(\Corootlattice(\RootD)) \geq n$.
\end{enumerate}
\end{theorem}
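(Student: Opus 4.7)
The plan is to invoke (4.2) to reduce each lower bound to a scalar minimization of $\gencos{\highestroot^\vee}$, compute or bound this minimum in closed form for each family, and exhibit a matching $\chi$-coloring in parts 1 and 2. By Proposition~\ref{prop_StrictVoronoiHighestRoot}, for an irreducible $\Roots$ the avoided set is the single Weyl orbit $S=\weyl\highestroot^\vee$, so whenever $\highestroot^\vee\in\Weights$ --- which covers $\RootA$, $\RootB$, $\RootC$ and $\RootD$ --- equation (4.2) reduces to
\[
\chi(\Corootlattice)\geq 1-\frac{1}{\min\limits_{u\in\R^n}\gencos{\highestroot^\vee}(u)}.
\]
I work in the standard coordinates recalled in Appendix~A.

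For $\RootC[n]$, the highest root is $\highestroot=2e_1$, so $\highestroot^\vee=e_1=\fweight{1}$ with orbit $\{\pm e_i\}$, and
\[
\gencos{\fweight{1}}(u)=\frac{1}{n}\sum_{i=1}^n\cos(2\pi u_i)\geq -1,
\]
attained at $u=(1/2,\ldots,1/2)$, yielding $\chi\geq 2$. Since $\Corootlattice(\RootC[n])=\Z^n$ and every $\pm e_i$ flips the parity of $v_1+\cdots+v_n$, that parity provides a valid $2$-coloring, so $\chi=2$ and the bound is sharp.

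For $\RootA[n-1]$, I work in $\R^n/\langle[1,\ldots,1]^t\rangle$ with $\highestroot^\vee=e_1-e_n=\fweight{1}+\fweight{n-1}$ and orbit $\{e_i-e_j:i\neq j\}$. Setting $w_i=e^{-2\pi\mathrm{i}\,u_i}$,
\[
\gencos{\highestroot^\vee}(u)=\frac{1}{n(n-1)}\sum_{i\neq j}w_i\,\overline{w_j}=\frac{1}{n(n-1)}\left(\left|\sum_i w_i\right|^2-n\right)\geq -\frac{1}{n-1},
\]
attained when $w_1,\ldots,w_n$ are the $n$-th roots of unity, giving $\chi\geq n$. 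Conversely, coloring $v\in\Corootlattice(\RootA[n-1])=\{v\in\Z^n:\sum v_i=0\}$ by $\sum_k k\,v_k\bmod n$ is a valid $n$-coloring: an edge $v-w=e_i-e_j$ with $1\leq i\neq j\leq n$ shifts the color by $i-j\not\equiv 0\pmod n$, so $\chi=n$ and the bound is sharp.

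For both $\RootB[n]$ and $\RootD[n]$, the coroot lattice is the $D_n$ lattice, so the graphs $G(\Corootlattice)$ coincide and the two chromatic numbers are equal. In each case $\highestroot^\vee=e_1+e_2=\fweight{2}$, whose Weyl orbit is $\{\pm e_i\pm e_j:i<j\}$; writing $c_i=\cos(2\pi u_i)\in[-1,1]$,
\[
\gencos{\fweight{2}}(u)=\frac{2}{n(n-1)}\sum_{i<j}c_ic_j=\frac{1}{n(n-1)}\left(\left(\sum_i c_i\right)^2-\sum_i c_i^2\right)\geq -\frac{1}{n-1},
\]
using $(\sum_i c_i)^2\geq 0$ and $\sum_i c_i^2\leq n$, which gives $\chi\geq n$. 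The main obstacle is this last estimate: the quadratic is neither convex nor concave on $[-1,1]^n$, so a direct Lagrangian search is awkward; I sidestep it by decoupling the two summands, which is enough for the stated bound $-1/(n-1)$, even though the sharper value $-1/n$ for odd $n$ is available via a short casework on $\{\pm 1\}$-valued extreme points and is not needed here.
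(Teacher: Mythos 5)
Your proof is correct, and it follows the same overall skeleton as the paper's --- both reduce each part to bounding $\min T_{\highestroot^\vee}$ via \Cref{eq_LatticeChebyshevBound} after invoking \Cref{prop_StrictVoronoiHighestRoot} --- but the key estimates are executed by a genuinely different route. The paper stays in the $z$-coordinates of $\Image$: for $\RootA$ it expands $T_{\fweight{1}+\fweight{n-1}}$ via the recurrence \Cref{eq_TPolyRecurrence} to get $(n-1)T_{\fweight{1}+\fweight{n-1}}=n\nops{z_1}^2-1$, and for $\RootB,\RootD$ it extracts the inequality $z_2\geq (n z_1^2-1)/(n-1)$ from the $(1,1)$ entry of the matrix $\posmat$ of \Cref{thm_HermiteCharacterization}. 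You instead work directly in the $u$-coordinates and bound the orbit sums by the elementary identities $\sum_{i\neq j}w_i\overline{w_j}=\lvert\sum_i w_i\rvert^2-n$ and $2\sum_{i<j}c_ic_j=(\sum_i c_i)^2-\sum_i c_i^2$, which avoids both the Chebyshev recurrence and the semi--algebraic description of $\Image$ entirely; this is more self--contained and makes the constants transparent, at the cost of not illustrating the machinery the paper is advertising. You also add value in part 2 by exhibiting the explicit $n$--coloring $v\mapsto\sum_k k\,v_k\bmod n$, where the paper simply cites \cite{DMMV} for the upper bound. Two small points: your assertion that $\Corootlattice(\RootB)=\Corootlattice(\RootD)=D_n$ is stated rather than checked (the paper verifies it by comparing coroot bases; it is a one--line check), and your closing aside that the true minimum for $\RootB,\RootD$ is $-1/n$ when $n$ is odd is a correct observation (the minimum of the indefinite quadratic over the cube is attained at a vertex) but, as you say, not needed for the stated bound.
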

\begin{proof}
\emph{1.} We have $\Corootlattice(\RootC)=\Z^n$. When we partition $\Z^n$ in elements with even and odd $\ell_1$--norm, then this gives an admissible coloring with $\chi(\Corootlattice(\RootC)) \leq 2$. To see that the spectral bound is sharp, note that $\highestroot^\vee = \highestroot/2 = \fweight{1}$ and consider the Chebyshev polynomial $T_{\highestroot^\vee}=T_{\fweight{1}}=z_1$. With \Cref{eq_LatticeChebyshevBound}, we obtain
\[
		\chi(\Corootlattice(\RootC))
\geq	1-\frac{1}{\min\limits_{z\in\Image} T_{\highestroot^\vee}(z)}
=		1-\frac{1}{\min\limits_{z\in\Image} z_1}
\geq	1-\frac{1}{-1}
=		2,
\]
because  $\Image\subseteq[-1,1]^n$.
	
\emph{2.} We have $\chi(\Corootlattice(\RootA)) = n$ \cite{DMMV} and $\highestroot^\vee = \highestroot = \fweight{1}+\fweight{n-1}$ with $-\fweight{1}\in\weyl\fweight{n-1}$. In \Cref{eq_LatticeChebyshevBound}, we consider
\[
	T_{\fweight{1}+\fweight{n-1}}
=	\nops{\weyl\,\fweight{1}}\,T_{\fweight{1}}\,T_{\fweight{n-1}} - \sum\limits_{\substack{\weight\in \weyl\,\fweight{1}\\ \weight\neq \fweight{1}}} T_{\weight+\fweight{n-1}}
=	n\,z_1\,z_{n-1} - (T_0+(n-2)\,T_{\fweight{1}+\fweight{n-1}}).
\]
The last equation follows from the fact that, if $\weight=-\fweight{n-1}$, then $\weight+\fweight{n-1}=0$, and, if $\weight \neq - \fweight{n-1}$, then $\weight+\fweight{n-1}\in \weyl(\fweight{1}+\fweight{n-1})$, see \Cref{equation_WeightsRootsA}. Since $-\fweight{1}\in\weyl\fweight{n-1}$, we also have $z_1\,z_{n-1} = z_1\,\overline{z_1} =\nops{z_1}^2$ for $z\in\Image$ (in the case of $\RootA$, $\Image$ is complex and can be embedded in $\R^{n-1}$ with \Cref{eq_realgencos}). Altogether, we obtain
\[
		\chi(\Corootlattice(\RootA))
\geq	1-\frac{1}{\min\limits_{z\in\Image} T_{\highestroot^\vee}(z)}
=		1-\frac{n-1}{\min\limits_{z\in\Image} n\,z_1\,z_{n-1}-1}
=		1-\frac{n-1}{\min\limits_{z\in\Image} n\,\nops{z_1}^2-1}
\geq	1-\frac{n-1}{-1}
=		n.
\]

\emph{3.} For $\Roots = \RootB[2]$, we are in the situation of \emph{1.} with $\chi(\Corootlattice(\RootB[2])) = 2$ (the cubic lattice). For $\Roots= \RootB[3]$, we are in the situation of \emph{2.} with $\chi(\Corootlattice(\RootB[3])) = 3$ (see \Cref{RhombicDodecahedron}). The root system $\RootD$ is not defined for $n\leq 3$. Thus, let $n\geq 4$ and $\Roots\in\{\RootB,\RootD\}$. For $1 \leq i \leq n-1$, we have $\roots_i^\vee(\RootB) = \roots_{i}^\vee(\RootD)$ and $\roots_{n}^\vee(\RootB) = \roots_{n}^\vee(\RootD) - \roots_{n-1}^\vee(\RootD)$ as well as $\roots_{n}^\vee(\RootD) = \roots_{n}^\vee(\RootB) + \roots_{n-1}^\vee(\RootB)$. Hence, we have $\Corootlattice(\RootB) = \Corootlattice(\RootD)$ with $\highestroot^\vee = \highestroot = \fweight{2}$. We consider $T_{\highestroot} = T_{\fweight{2}}(z) = z_2$ and minimize on $\Image$. By \Cref{thm_HermiteCharacterization}, we have $\Image = \{z\in \R^n \,\vert \, \posmat(z)\succeq 0 \}$ and the first entry of $\posmat$ is $4\,\posmat_{11}=T_0-T_{2\fweight{1}}$ with
\[
	T_{2\fweight{1}}
=	\nops{\weyl\,\fweight{1}}\,T_{\fweight{1}}^2 - \sum\limits_{\substack{\weight\in \weyl\,\fweight{1}\\ \weight\neq \fweight{1}}} T_{\weight+\fweight{1}}
=	2\,n\,z_1^2 - (1+2\,(n-1)\,z_2).
\]
The last equation follows from the fact that, if $\weight=-\fweight{1}$, then $\weight+\fweight{1}=0$, and, if $\weight \neq - \fweight{1}$, then $\weight+\fweight{1}\in \weyl(\fweight{2})$, see \Cref{equation_WeightsRootsB,equation_WeightsRootsD}. Thus, for $z\in\Image$, we have
\[
		0
\leq	4\,\posmat_{11}(z)
=		T_0(z) - T_{2\fweight{1}}(z)
=		1 - (2\,n\,z_1^2 - 1 - 2\,(n-1)\,z_2)
\Leftrightarrow
		z_2
\geq	\frac{n\,z_1^2-1}{n-1}
\geq	\frac{-1}{n-1}
\]
and obtain
\[
		\chi(\Corootlattice(\Roots))
\geq	1-\frac{1}{\min\limits_{z\in\Image} T_{\highestroot^\vee}(z)}
=		1-\frac{1}{\min\limits_{z\in\Image} T_{\fweight{2}}(z)}
=		1-\frac{1}{\min\limits_{z\in\Image} z_2}
\geq	1-\frac{n-1}{-1}
=		n.
\]
\end{proof}

\begin{remark}
Since, up to rescaling, two adjacent vertices in $G(\Corootlattice)$ are also adjacent in the graph $G(\Corootlattice, \Corootlattice \cap \partial\Vor(\Lambda))$, the value of $\chi(\Lambda)$ also gives a lower bound on $\chi_m(\R^n, \partial\Vor(\Lambda))$, even if the two numbers can be far from each other. For instance, we have $\chi(\Lambda(\RootA[n])) = n+1$, but $\chi(\R^n, \partial\Vor(\Lambda(\RootA[n]))) = 2^n$ \emph{\cite{BBMP}}.
\end{remark} 

\subsection{The chromatic number of $\Z^n$ for the crosspolytope}
\label{sssec_CP}

We consider the integer lattice $V=\Z^n$ together with the avoided set
\[
	\mathbb{B}^1_r
:=	\{ u\in\Z^n \, \vert \, \norm{ u }_1 = \nops{u_1}+\ldots+\nops{u_n}=r\}
\]
for $r\in\N$. Two vertices in the graph $G(\Z^n,\mathbb{B}^1_r)$ are adjacent, if the absolute values of the differences between their coordinates sums up to $r$. The convex hull of $\mathbb{B}^1_r$ is the ball of radius $r$ with respect to the $\ell_1$--norm, also known as the crosspolytope, see \Cref{pic_C3B3L1Norm}. Several bounds for the chromatic number $\chi(\Z^n,\mathbb{B}^1_r)$ were given in \cite{furedi04} without using spectral bounds, but through combinatorial arguments. If $\mathbb{B}^1_r \subseteq \Weights$ is contained in the weight lattice of some root system in $\R^n$, then we can compare by computing
\begin{equation}\label{eq_ZnChebyshevBound}
\chi(\Z^n , \mathbb{B}^1_r)
\geq	1-\frac{1}{F(r)},
\end{equation}
where $F(r):=F(\mathbb{B}^1_r)$ is defined as in \Cref{thm_ChromaticChebyshevBound}.

\begin{lemma}\label{prop_L1WeightsZ}
Let $0 < r\in\N$. If $\Roots$ is a root system of type $\RootB$, $\RootC$ or $\RootD$, then $\mathbb{B}^1_r \subseteq \Weights$ and the dominant weights are $\mathbb{B}^1_r\cap\Weights^+=$
\[
\begin{cases}
	\{	\alpha_1\,\fweight{1} + \ldots + \alpha_n\,\fweight{n} \, \vert \,
	\alpha\in\N^n,\,\sum\limits_{i=1}^n i\,\alpha_i = r \}
	,&	\tbox{if} \Roots = \RootC	\\
	\{	\alpha_1\,\fweight{1} + \ldots + \alpha_{n-1}\,\fweight{n-1} + 2\,\alpha_n\,\fweight{n} \, \vert \,
	\alpha\in\N^n,\,\sum\limits_{i=1}^n i\,\alpha_i = r \}
	,&	\tbox{if} \Roots = \RootB	\\
	\{	 \alpha_1\,\fweight{1} + \ldots + \alpha_{n-2}\,\fweight{n-2} + 2(\alpha_{n-1}\,\fweight{n-1} + \alpha_n\,\fweight{n}) \, \vert \,
	\alpha\in\N^n,\,\sum\limits_{i=1}^{n} i \alpha_i + \alpha_{n-1} = r \}
	,&	\tbox{if} \Roots = \RootD
\end{cases}	.
\]
\end{lemma}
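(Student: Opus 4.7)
The plan is to handle each of the three types separately, using the explicit fundamental weights recalled in \Cref{Appendix_IrredRootSys}. For each type the two ingredients I need are (i) the inclusion $\Z^n\subseteq \Weights$, which gives $\mathbb{B}^1_r\subseteq \Weights$ for free, and (ii) a parameterization of $\Weights^+\cap \Z^n$ by nonnegative integers. The latter combines the dominance condition, which says $w\in \Weights^+$ iff $w=\sum_{i}c_i \fweight{i}$ with $c_i\in\N$, with parity conditions on the $c_i$ that encode $w\in \Z^n$. Once this is done, one writes $w$ in the standard basis $e_1,\ldots,e_n$, checks that the coordinates are nonnegative (up to a controlled sign in the $\RootD$ case), and reads off a closed formula for $\|w\|_1$ as a linear form in the $\alpha_i$.

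For $\RootC[n]$ everything is essentially formal: $\fweight{i}=e_1+\ldots+e_i$ for every $i$, so $\Weights=\Z^n$ and no parity condition is needed. A dominant weight $w=\sum_i \alpha_i\fweight{i}$ has coordinates $w_k=\sum_{i\geq k}\alpha_i\geq 0$, hence $\|w\|_1=\sum_k w_k=\sum_i i\,\alpha_i$, which matches the claim with $c_i=\alpha_i$. For $\RootB[n]$, only $\fweight{n}=\tfrac12(e_1+\ldots+e_n)$ is half-integer, so integrality forces $c_n\in 2\N$. Writing $c_n=2\alpha_n$ and $c_i=\alpha_i$ for $i<n$, one finds $w_k=\sum_{i=k}^{n-1}\alpha_i+\alpha_n$ for $k\leq n-1$ and $w_n=\alpha_n$, all nonnegative, and summing yields $\|w\|_1=\sum_{i=1}^{n-1}i\,\alpha_i+n\,\alpha_n=\sum_{i=1}^{n}i\,\alpha_i$, as claimed.

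For $\RootD[n]$ the two half-integer fundamental weights $\fweight{n-1}=\tfrac12(e_1+\ldots+e_{n-1}-e_n)$ and $\fweight{n}=\tfrac12(e_1+\ldots+e_n)$ interact in a more delicate way. Integrality now only requires $c_{n-1}$ and $c_n$ to share a common parity, and the computation of the standard coordinates gives $w_k=\sum_{i=k}^{n-2}c_i+\tfrac{c_{n-1}+c_n}{2}$ for $k\leq n-1$ and $w_n=\tfrac{c_n-c_{n-1}}{2}$. The latter is not sign-definite, so $\|w\|_1$ picks up an extra $|c_n-c_{n-1}|/2$ term, and a short case distinction on the sign of $w_n$ is needed to reduce this to a linear form in the parameters. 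This is the main obstacle, and is the step where the exact shape of the parameterization and of the linear constraint $\sum_i i\,\alpha_i+\alpha_{n-1}=r$ must be pinned down; once the correct parity class is singled out and the sign of the last coordinate fixed, matching up terms reduces to a finite bookkeeping check analogous to the $\RootB$ computation.
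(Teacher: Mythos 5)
Your computations for $\RootC$ and $\RootB$ are correct and complete, and they follow exactly the route the paper takes: its entire proof is a one--line citation of the explicit fundamental weights in \Cref{equation_WeightsRootsC,equation_WeightsRootsB,equation_WeightsRootsD}, so spelling out the coordinate and $\ell_1$--norm computations as you do is precisely the content being left to the reader. For those two cases there is nothing to add.

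The $\RootD$ case, however, is a genuine gap: you stop at what you yourself call ``the main obstacle'' and assert that the rest is ``a finite bookkeeping check''. Carrying that check out shows it does not close in the form you describe. First, integrality of $w=\sum_i c_i\,\fweight{i}$ only forces $c_{n-1}\equiv c_n \pmod 2$, not that both are even; for instance $\fweight{n-1}+\fweight{n}=e_1+\cdots+e_{n-1}$ is a dominant weight in $\Z^n$ with $\ell_1$--norm $n-1$, yet it is not of the form $\alpha_1\fweight{1}+\cdots+\alpha_{n-2}\fweight{n-2}+2(\alpha_{n-1}\fweight{n-1}+\alpha_n\fweight{n})$, so the parameterization in the statement does not reach it. Second, even on the both--even parity class one computes
\[
\norm{w}_1
=\sum_{i=1}^{n-2}i\,\alpha_i+(n-1)(\alpha_{n-1}+\alpha_n)+\nops{\alpha_n-\alpha_{n-1}},
\]
which agrees with the displayed constraint $\sum_i i\,\alpha_i+\alpha_{n-1}=r$ only when $\min(\alpha_{n-1},\alpha_n)=0$: already $2\fweight{n-1}+2\fweight{n}=2(e_1+\cdots+e_{n-1})$ has $\ell_1$--norm $2(n-1)$, not the value $2n$ the constraint predicts. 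So the deferred case distinction is exactly where the argument must be made precise --- you need the correct parity class $c_{n-1}\equiv c_n\pmod 2$ and a split on the sign of $w_n=(c_n-c_{n-1})/2$ --- and doing so forces a correction of the parameterization rather than a routine verification of it. Until that is written out, the $\RootD$ part of the proof is missing.
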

\begin{proof}
This follows from \Cref{equation_WeightsRootsC,equation_WeightsRootsB,equation_WeightsRootsD}.
\end{proof}

\begin{figure}[H]
\begin{center}
	\begin{subfigure}{.3\textwidth}
		\centering
		\includegraphics[width=1.5cm]{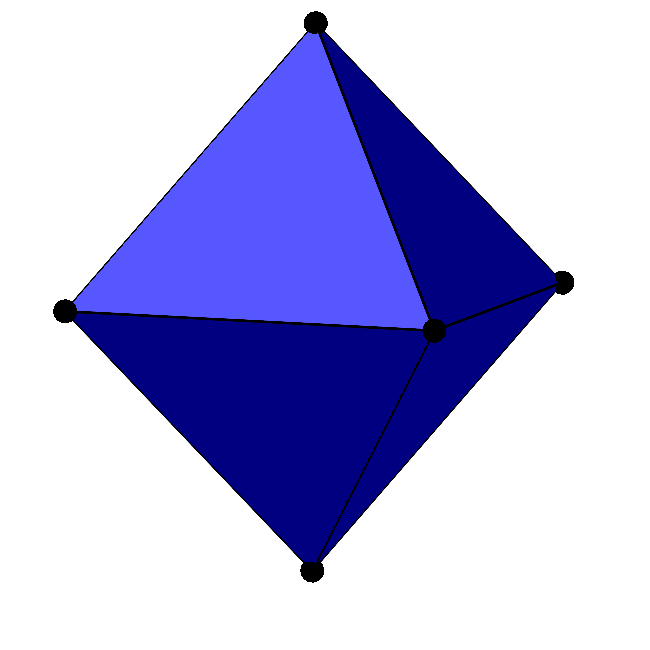}
		\caption{$r=1$}
		\label{L1Lvl1}
	\end{subfigure}
	\begin{subfigure}{.3\textwidth}
		\centering
		\includegraphics[width=3cm]{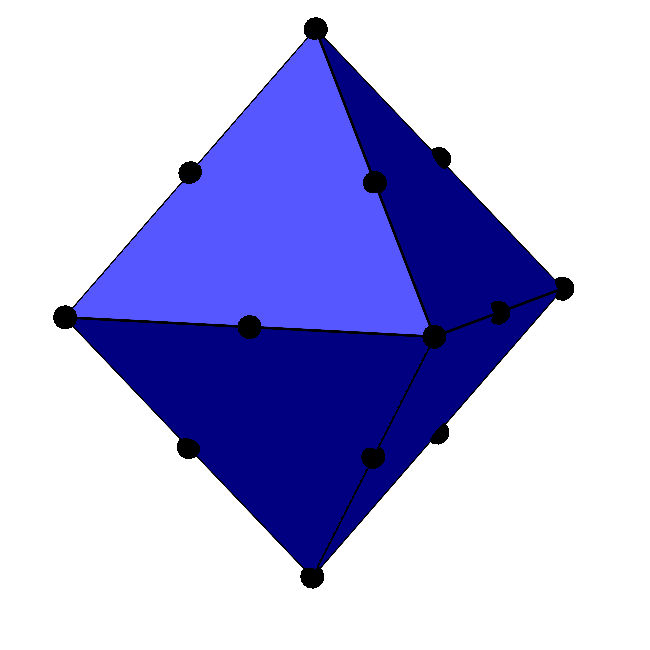}
		\caption{$r=2$}
		\label{L1Lvl2}
	\end{subfigure}
	\quad\quad\quad
	\begin{subfigure}{.3\textwidth}
		\centering
		\includegraphics[width=4.5cm]{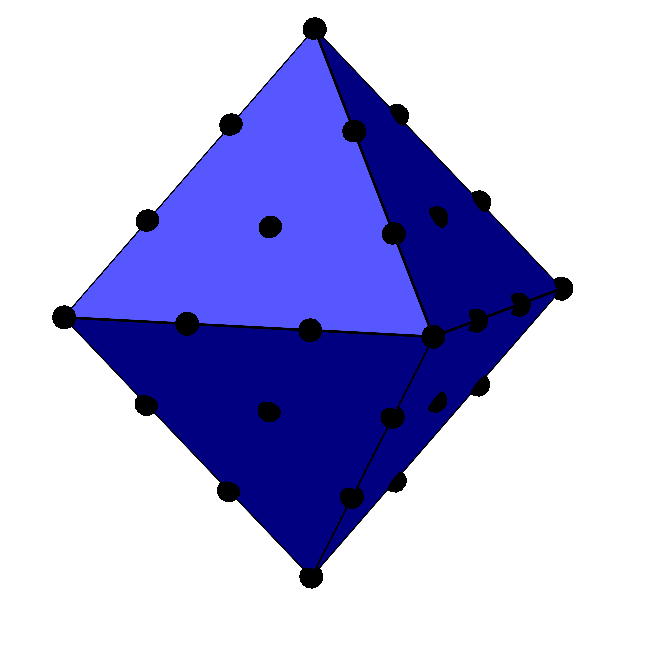}
		\caption{$r=3$}
		\label{L1Lvl3}
	\end{subfigure}
	\caption{The crosspolytope of radius $r$ with respect to the $\ell_1$--norm and the points $\mathbb{B}^1_r$ with integer coordinates on the boundary.}
	\label{pic_C3B3L1Norm}
\end{center}
\end{figure}

\begin{remark}\label{remark_ChromZnRn}
Denote by $\mathcal{P}$ the crosspolytope from \emph{\Cref{pic_C3B3L1Norm}} for $r=1$, that is, $\mathcal{P} = \mathrm{ConvHull}(\mathbb{B}^1_1)$. Then $G(\Z^n,\mathbb{B}^1_r)$ is a discrete subgraph of $G(\R^n,\partial(r\mathcal{P}))$ and, since $\R^n$ is scaling invariant, we have
\[
		\chi_m(\R^n,\partial\mathcal{P})
=		\chi_m(\R^n,\partial(r\mathcal{P}))
\geq	\chi(\Z^n,\mathbb{B}^1_r).
\]
Hence, computing the spectral bound for the chromatic number of $\Z^n$ always yields a lower bound for the chromatic number of $\R^n$.
\end{remark}

\subsubsection{Analytical bounds}

We compute the spectral bound for $\chi(\Z^n,\mathbb{B}^1_r)$ first for the cases, where our rewriting technique allows for an analytical proof.

\begin{proposition}\label{coro_SpectralBoundZodd}
Let $r\in \N$ be odd. The spectral bound is sharp for $\chi (\Z^n,\,\mathbb{B}^1_r) = 2$.
\end{proposition}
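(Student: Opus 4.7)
The plan is to sandwich $\chi(\Z^n,\mathbb{B}^1_r)$ between $2$ and $2$ by combining an elementary two--coloring with a trivial evaluation of the spectral bound at one carefully chosen point. Throughout I would take $\Roots=\RootC[n]$, so that $\Weights=\Z^n\supseteq\mathbb{B}^1_r$ (cf.\ \Cref{prop_L1WeightsZ}) and $\weyl=\mathfrak{S}_n\ltimes\{\pm 1\}^n$ acts by signed permutations, hence stabilizes every $\ell_1$--sphere $\mathbb{B}^1_r$, so that \Cref{thm_ChromaticChebyshevBound} applies.

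For the upper bound I would color $u\in\Z^n$ by the parity of $u_1+\dots+u_n$. Since $|k|\equiv k\pmod 2$, any edge $u\sim v$ in $G(\Z^n,\mathbb{B}^1_r)$ satisfies
\[
\sum_i (u_i-v_i)\;\equiv\;\sum_i |u_i-v_i|\;=\;r\;\equiv\;1\pmod 2,
\]
so the two endpoints receive opposite colors and $\chi(\Z^n,\mathbb{B}^1_r)\leq 2$.

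For the matching spectral lower bound I would evaluate the Fourier transform of an admissible measure at the half--lattice point $u^\star:=\tfrac12(1,\dots,1)\in\R^n$. Every $\weight\in\mathbb{B}^1_r$ has integer coordinates with $\sum_i\weight_i\equiv r\equiv 1\pmod 2$, so
\[
\mathfrak{e}^{\weight}(u^\star)\;=\;\exp\!\left(-\pi\mathrm{i}\textstyle\sum_i\weight_i\right)\;=\;-1.
\]
Because $\mathbb{B}^1_r$ is $\weyl$--stable, the same value persists on every Weyl orbit, whence $\gencos{\weight}(u^\star)=-1$ for every $\weight\in\mathbb{B}^1_r\cap\Weights^+$. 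Consequently, for any admissible coefficient vector $c$ with $c\geq 0$ and $\sum_\weight c_\weight=1$, the image point $z^\star:=\gencos{}(u^\star)\in\Image$ gives
\[
\sum_{\weight\in\mathbb{B}^1_r\cap\Weights^+} c_\weight\,T_\weight(z^\star)\;=\;-1.
\]
Combined with the trivial bound $|\gencos{\weight}|\leq 1$ on $\R^n$, this forces $F(r)=-1$ regardless of the choice of $c$, and \Cref{thm_ChromaticChebyshevBound} then yields $\chi(\Z^n,\mathbb{B}^1_r)\geq 1-1/F(r)=2$, matching the coloring.

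I do not anticipate a serious obstacle: once one notices that $u^\star$ simultaneously kills every character $\mathfrak{e}^{\weight}$ with $\weight\in\mathbb{B}^1_r$ and $r$ odd, no Lasserre relaxation or Positivstellensatz certificate is required, and the only bookkeeping is the choice of root system so that $\mathbb{B}^1_r$ sits inside its weight lattice and so that the Weyl group preserves the $\ell_1$--norm.
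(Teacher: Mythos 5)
Your proof is correct, and while the upper bound (the parity two--coloring) is exactly the paper's, your lower bound takes a genuinely different and in one respect more complete route. The paper first observes $\chi(\Z^n,\mathbb{B}^1_r)=\chi(\Z^n,\mathbb{B}^1_1)=2$ and then certifies sharpness of the spectral bound only for $r=1$, where $\mathbb{B}^1_1=\weyl\fweight{1}$ is a single orbit, so there is no freedom in the coefficients, $T_{\fweight{1}}=z_1$, and the containment $\Image\subseteq[-1,1]^n$ immediately gives $\min_{z\in\Image}z_1\geq -1$ and hence the bound $2$. You instead work in the $u$--coordinates and evaluate every admissible measure at the single point $u^\star=\tfrac12(1,\dots,1)$: since signed permutations preserve the $\ell_1$--norm and $\sum_i\weight_i\equiv\norm{\weight}_1=r\equiv 1\pmod 2$ for every $\weight\in\mathbb{B}^1_r$, all characters in the support equal $-1$ there, which together with $\nops{\gencos{\weight}}\leq 1$ pins $F(r)=-1$ exactly for \emph{every} odd $r$ and every choice of coefficients. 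What your version buys is a uniform, direct verification of sharpness for all odd $r$ (the paper handles this only implicitly through the reduction to $r=1$), at the cost of stepping outside the Chebyshev/semi--algebraic picture that the paper is deliberately showcasing; the paper's version stays entirely inside the polynomial formulation via $\Image\subseteq[-1,1]^n$. Both arguments are sound.
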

\begin{proof}
Since $r$ is odd, partitioning the vertices of $G(\Z^n,\mathbb{B}^1_r)$ in those with even and those with odd $\ell_1$--norm yields two independent sets. Hence, $\chi (\Z^n,\,\mathbb{B}^1_r) = \chi (\Z^n,\,\mathbb{B}^1_1) = 2$. To see that the spectral bound is sharp, let $\Roots$ be a root system of type $\RootC$. By \Cref{prop_L1WeightsZ}, we have $\mathbb{B}^1_1 = \weyl \fweight{1}$ and so
\[
	\chi (\Z^n,\,\mathbb{B}^1_1) \geq 1 - \frac{1}{\min\limits_{z\in\Image} z_1} \geq 1-\frac{1}{-1} = 2 .
\]
\end{proof}

The chromatic number of $\Z^n$ for $\ell_1$--distance $r=2$ is $2\,n$. This was proven in \cite[Theorem 1]{furedi04} with a purely combinatorial argument by fixing a coloring and showing that it is admissible and minimal.

\begin{theorem}\label{thm_ZnL1r2bound}
The spectral bound is sharp for $\chi (\Z^n,\,\mathbb{B}^1_2) = 2\,n$.
\end{theorem}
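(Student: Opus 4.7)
The plan is to work within the root system $\RootC$, whose weight lattice contains $\mathbb{B}^1_2$ by \Cref{prop_L1WeightsZ}, and to exhibit a probability vector $c^*$ that forces the spectral bound in \Cref{thm_ChromaticChebyshevBound} to reach $2n$. Combined with the matching upper bound $\chi(\Z^n,\mathbb{B}^1_2)\leq 2n$ from \cite[Theorem 1]{furedi04}, this proves both the equality and the sharpness of the spectral bound.

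First, \Cref{prop_L1WeightsZ} gives $\mathbb{B}^1_2\cap\Weights^+=\{2\fweight{1},\fweight{2}\}$, so $\mathbb{B}^1_2=\weyl\,2\fweight{1}\sqcup\weyl\,\fweight{2}$ and
\[
F(\mathbb{B}^1_2)=\max_{c_1+c_2=1,\;c_1,c_2\geq 0}\;\min_{z\in\Image}\bigl[c_1\,T_{2\fweight{1}}(z)+c_2\,T_{\fweight{2}}(z)\bigr].
\]
By definition $T_{\fweight{2}}(z)=z_2$. To compute $T_{2\fweight{1}}$ I would apply the recurrence \eqref{eq_TPolyRecurrence} to $T_{\fweight{1}}^2$: each fiber of the orbit map $\weyl\to\weyl\,\fweight{1},\;A\mapsto A\fweight{1}$, has size $\nops{\weyl}/(2n)$, and grouping terms by whether $A\fweight{1}$ equals $\fweight{1}$, equals $-\fweight{1}$, or is one of the $2(n-1)$ remaining elements $\pm e_j$ with $j\neq 1$ (all of whose sums with $\fweight{1}$ lie in the orbit of $\fweight{2}$), collapses the right-hand side to $T_{2\fweight{1}}(z)+1+2(n-1)z_2$, so
\[
T_{2\fweight{1}}(z)=2n\,z_1^2-2(n-1)\,z_2-1.
\]

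Next, parametrize $\Image$ via $x_j:=\cos(2\pi u_j)\in[-1,1]$, which gives $z_1=\tfrac{1}{n}\sum_j x_j$ and $z_2=\tfrac{1}{\binom{n}{2}}\sum_{i<j}x_ix_j$. Setting $s:=\sum_j x_j$ and $q:=\sum_j x_j^2$, the objective rewrites as
\[
c_1\,T_{2\fweight{1}}(z)+c_2\,T_{\fweight{2}}(z)=\frac{q\,\bigl[2c_1(n-1)-c_2\bigr]}{n(n-1)}+\frac{c_2\,s^2}{n(n-1)}-c_1.
\]
The key observation is that the choice $c^*:=\bigl(\tfrac{1}{2n-1},\,\tfrac{2(n-1)}{2n-1}\bigr)$ annihilates the coefficient of $q$, reducing the expression to $\tfrac{2s^2}{n(2n-1)}-\tfrac{1}{2n-1}\geq-\tfrac{1}{2n-1}$, with equality at $s=0$ attained e.g. by $x=(1,-1,0,\ldots,0)$. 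Hence $F(\mathbb{B}^1_2)\geq-\tfrac{1}{2n-1}$, and \Cref{thm_ChromaticChebyshevBound} yields $\chi(\Z^n,\mathbb{B}^1_2)\geq 1-1/F(\mathbb{B}^1_2)\geq 2n$; combined with the Füredi--Kang upper bound, both the chromatic number and the spectral bound equal $2n$.

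The main obstacle will be the Chebyshev identity for $T_{2\fweight{1}}$: it requires careful orbit bookkeeping in the recurrence \eqref{eq_TPolyRecurrence} to confirm that all $2(n-1)$ mixed terms collapse to $T_{\fweight{2}}$. Once this identity is secured, identifying $c^*$ is driven by the natural linear condition that makes the $q$-coefficient vanish, so that the max-min reduces to a trivial univariate quadratic whose minimum is explicit.
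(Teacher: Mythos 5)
Your proposal is correct and follows essentially the same route as the paper: work in type $\RootC$, use \Cref{prop_L1WeightsZ} to get $\mathbb{B}^1_2=\weyl(2\fweight{1})\cup\weyl\fweight{2}$, derive $T_{2\fweight{1}}(z)=2n\,z_1^2-2(n-1)z_2-1$ from the recurrence \eqref{eq_TPolyRecurrence}, and take $c=1/(2n-1)$ so that the $z_2$ terms cancel and the combination reduces to $(2n\,z_1^2-1)/(2n-1)\geq -1/(2n-1)$. Your extra substitution into the cosine coordinates $x_j$ only re-expresses $z_1^2\geq 0$ and motivates the choice of $c$, but the argument is the paper's.
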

\begin{proof}
Let $\Roots$ be a root system of type $\RootC$. Thanks to \Cref{prop_L1WeightsZ}, we have $\mathbb{B}^1_2 = \weyl (2\,\fweight{1}) \cup \weyl\fweight{2}$. We choose $c=1/(2\,n-1)$ and consider
\[
	c\,T_{2\fweight{1}} + (1-c)\,T_{\fweight{2}}
=	\frac{2\,n\,z_1^2 - 2(n-1)z_2 - 1}{2\,n-1} + \frac{2(n-1)z_2}{2\,n-1}
=	\frac{2\,n\,z_1^2 - 1}{2\,n-1} ,
\]
where the expression for $T_{2\fweight{1}}$ is obtained as in the proof of \Cref{thm_LatticeChromatic} (\emph{3.}). By \Cref{eq_ZnChebyshevBound}, we have
\[
		\chi (\Z^n,\,\mathbb{B}^1_2)
\geq	1-\frac{1}{\min\limits_{z\in\Image} c\,T_{2\fweight{1}}(z) + (1-c)\,T_{\fweight{2}}(z)}
\geq	1-\frac{1}{(2\,n\,z_1^2 - 1)/(2\,n-1)}
\geq	1-\frac{2\,n-1}{ - 1}
=		2\,n.
\]
\end{proof}

\begin{corollary}\label{coro_Z2L1rEven}
Let $0 < r \in \N$ be even. The spectral bound is sharp for $\chi (\Z^2,\,\mathbb{B}^1_r) = 4$.
\end{corollary}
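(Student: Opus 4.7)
The plan is to prove $\chi_m(\Z^2, \mathbb{B}^1_r)=4$ for even $r>0$ by bounding separately from above and from below, and to verify along the way that the spectral bound attains $4$.

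For the upper bound $\chi_m(\Z^2, \mathbb{B}^1_r)\leq 4$, I would exhibit an explicit proper $4$-coloring. The identity $|a|+|b|=\max(|a+b|,|a-b|)$ in $\R^2$ means that $(a_1,b_1)$ and $(a_2,b_2)$ are adjacent in $G(\Z^2,\mathbb{B}^1_r)$ precisely when $\max\bigl(|(a_1-a_2)+(b_1-b_2)|,\,|(a_1-a_2)-(b_1-b_2)|\bigr)=r$. I would then color each vertex by
\[
c(a,b):=\bigl(\lfloor (a+b)/r\rfloor \bmod 2,\ \lfloor (a-b)/r\rfloor \bmod 2\bigr)\in\{0,1\}^2.
\]
If $c(a_1,b_1)=c(a_2,b_2)$, then $\lfloor (a_1+b_1)/r\rfloor$ and $\lfloor (a_2+b_2)/r\rfloor$ differ by an even integer, and similarly for $a-b$. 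When the corresponding floors coincide, $|(a_1-a_2)\pm(b_1-b_2)|<r$; when they differ by at least $2$, the same quantity exceeds $r$. Either way neither equals $r$, so the two vertices are not adjacent.

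For the lower bound, I would dilate the construction used in \Cref{thm_ZnL1r2bound} at $r=2$. Writing $r=2k$ and working with the root system $\RootC[2]$, the weights $2k\,\fweight{1}$ and $k\,\fweight{2}$ both have $\ell_1$-norm $2k=r$, so their Weyl orbits lie in $\mathbb{B}^1_r\cap\Weights$. Assigning the coefficients $c_{2k\,\fweight{1}}=1/3$ and $c_{k\,\fweight{2}}=2/3$ gives the trigonometric polynomial
\[
f_k(u):=\tfrac{1}{3}\,\gencos{2k\,\fweight{1}}(u)+\tfrac{2}{3}\,\gencos{k\,\fweight{2}}(u).
\]
Because $A(k\,\weight)=k\,(A\,\weight)$ for every $A\in\weyl$, one has $\gencos{k\,\weight}(u)=\gencos{\weight}(k\,u)$ directly from the definition, so $f_k(u)=f_1(k\,u)$. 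The minimum of $f_1$ on $\R^2$ was shown to equal $-1/3$ in the proof of \Cref{thm_ZnL1r2bound}, and since $u\mapsto k\,u$ is a bijection of $\R^2$, so is the minimum of $f_k$. Then \Cref{thm_ChromaticChebyshevBound} yields $\chi_m(\Z^2,\mathbb{B}^1_r)\geq 1-1/(-1/3)=4$, matching the upper bound and certifying sharpness of the spectral bound.

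The main obstacle in a direct attack is that by \Cref{prop_L1WeightsZ} the set $\mathbb{B}^1_r\cap\Weights^+$ contains $r/2+1$ dominant weights, so computing $F(\mathbb{B}^1_r)$ a priori requires optimizing over an $(r/2)$-dimensional simplex of nonnegative coefficients. The dilation trick bypasses this difficulty: the inclusion $k\cdot\mathbb{B}^1_2\subset \mathbb{B}^1_{2k}$ lets one support an optimal measure on just the two orbits $\weyl(2k\,\fweight{1})$ and $\weyl(k\,\fweight{2})$ and to reduce the entire argument to the already settled case $k=1$.
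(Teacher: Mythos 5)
Your proof is correct, and it diverges from the paper mainly in the upper bound. The paper obtains $\chi(\Z^2,\mathbb{B}^1_r)\leq 4$ by citing the result $\chi_m(\R^2,\partial\mathcal{P})=4$ for the square from the reference [BBMP] together with \Cref{remark_ChromZnRn}, and gets the lower bound from the chain $\chi(\Z^2,\mathbb{B}^1_r)\geq\chi(\Z^2,\mathbb{B}^1_2)\geq 4$, the last step being \Cref{thm_ZnL1r2bound}; you instead give a self-contained explicit proper $4$-coloring via $\bigl(\lfloor (a+b)/r\rfloor\bmod 2,\ \lfloor (a-b)/r\rfloor\bmod 2\bigr)$, which checks out (with the same color, each of $|(a_1+b_1)-(a_2+b_2)|$ and $|(a_1-b_1)-(a_2-b_2)|$ is either $<r$ or $>r$, so their maximum, which equals the $\ell_1$-distance, is never $r$). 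For the lower bound your dilation argument $\gencos{k\weight}(u)=\gencos{\weight}(k\,u)$, supporting the $r=2$ measure on the orbits $\weyl(2k\,\fweight{1})$ and $\weyl(k\,\fweight{2})\subseteq\mathbb{B}^1_{2k}$, is the mechanism the paper leaves implicit in the sentence ``since $2$ divides $r$, the spectral bound gives at least $4$,'' and it has the mild advantage of certifying sharpness of the spectral bound for $G(\Z^2,\mathbb{B}^1_r)$ itself rather than only for the subgraph $G(\Z^2,\mathbb{B}^1_2)$. One small imprecision: the proof of \Cref{thm_ZnL1r2bound} shows $\min f_1\geq -1/3$ rather than equality, but that inequality is all your argument needs, since $1-1/F$ is increasing in $F$ on the negatives.
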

\begin{proof}
For $r=2$, this is a special case of \Cref{thm_ZnL1r2bound}. Since $2$ divides $r$ whenever $r$ is even, the spectral bound gives at least $4$ for $\chi (\Z^2,\,\mathbb{B}^1_r)$. Let $\mathcal{P}=\mathrm{ConvHull}(\mathbb{B}^1_1)$ be the crosspolytope in $\R^2$, that is, a square. By \cite{BBMP} and \Cref{remark_ChromZnRn}, we have
\[
4 = \chi_m(\R^2,\,\partial\mathcal{P}) = \chi_m(\R^2,\,\partial(r\mathcal{P})) \geq \chi(\Z^2,\,\mathbb{B}^1_r) \geq \chi(\Z^2,\,\mathbb{B}^1_2) \geq 4.
\]
\end{proof}

\subsubsection{Numerical bounds}

We will now give spectral bounds for $\chi(\Z^n,\mathbb{B}^1_r)$ numerically for the dimensions $n=3$ and $n=4$. In order to do so, we apply \Cref{coro_ChromBoundLasserre} and compute $F(r,d):=F(\mathbb{B}^1_r,d)$ for $d\in \N$ sufficiently large.

\subsubsection*{Dimension $n = 3$}

\begin{table}[H]
\begin{center}
	\begin{tabular}{|c|c||c|c|c|c|c|c|c|}
		\hline
		$\Roots$	&	$d \backslash r$	&	$2$			&	$4$			&	$6$			&	$8$			&
		$10$		&	$12$		&	$14$		\\
		\hline
		\hline
		$\RootB[3]$	&	$3$					&	$6.00000$	&	$6.28148$	&	$6.01551$	&	$-		$	&
		$-		$	&	$-		$	&	$-		$	\\
		\hline
		&	$4$					&	$6.00000$	&	$6.28148$	&	$6.07717$	&	$6.28148$	&
		$-		$	&	$-		$	&	$-		$	\\
		\hline
		&	$5$					&	$6.00000$	&	$6.28148$	&	$6.29004$	&	$6.28183$	&
		$6.12543$	&	$-		$	&	$-		$	\\
		\hline
		&	$6$					&	$6.00000$	&	$6.28148$	&	$6.30244$	&	$6.29799$	&
		$6.27850$	&	$6.28234$	&	$-		$	\\
		\hline
		&	$7$					&	$6.00000$	&	$6.28148$	&	$6.30269$	&	$6.30435$	&
		$6.30031$	&	$6.29708$	&	$6.27830$	\\
		\hline
		&	$8$					&	$6.00000$	&	$6.28148$	&	$6.30269$	&	$6.30463$	&
		$6.30053$	&	$6.30088$	&	$6.29604$	\\
		\hline
		&	$9$					&	$6.00000$	&	$6.28148$	&	$6.30269$	&	$6.30501$	&
		$6.30502$	&	$6.30227$	&	$6.301858$	\\
		\hline
		\hline
		$\RootC[3]$	&	$3$					&	$6.00000$	&	$6.28148$	&	$6.02310$	&	$-		$	&
		$-		$	&	$-		$	&	$-		$	\\
		\hline
		&	$4$					&	$6.00000$	&	$6.28148$	&	$6.29021$	&	$6.28198$	&
		$-		$	&	$-		$	&	$-		$	\\
		\hline
		&	$5$					&	$6.00000$	&	$6.28148$	&	$6.30182$	&	$6.29951$	&
		$6.29810$	&	$-		$	&	$-		$	\\
		\hline
		&	$6$					&	$6.00000$	&	$6.28148$	&	$6.30269$	&	$6.30455$	&
		$6.30048$	&	$6.30069$	&	$-		$	\\
		\hline
		&	$7$					&	$6.00000$	&	$6.28148$	&	$6.30269$	&	$6.30494$	&
		$6.30057$	&	$6.30229$	&	$6.30156$	\\
		\hline
	\end{tabular}
\end{center}
\caption{The bound $\chi (\Z^3,\,\mathbb{B}^1_r) \geq 1-1/F(r,d)$.}
\label{B3C3L1Table2}
\end{table}

The value $\chi (\Z^3,\,\mathbb{B}^1_2)=6$ is obtained immediately with $F(2,1)$. The highest value in the table is given by $F(9,10)$ for $\RootB[3]$. Furthermore, $F(4,d)$ seems to be stable in the case of both root systems. We give the obtained optimal coefficients, which coincide for $\RootB[3]$ and $\RootC[3]$ in \Cref{B3L1NormCoefficients,C3B3L1NormTable} and \Cref{table_appendix}.

\begin{figure}[H]
	\begin{center}
		\begin{subfigure}{.4\textwidth}
			\centering
			\includegraphics[width=6cm]{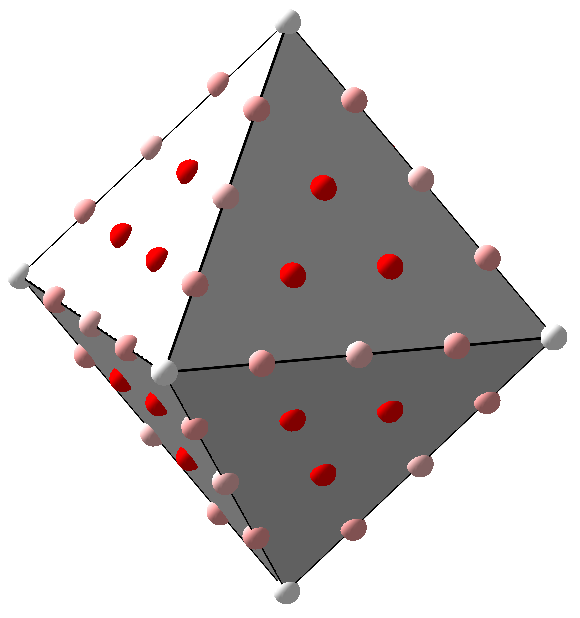}
			\caption{$r=4$}
		\end{subfigure}
		\quad
		\begin{subfigure}{.4\textwidth}
			\centering
			\includegraphics[width=6cm]{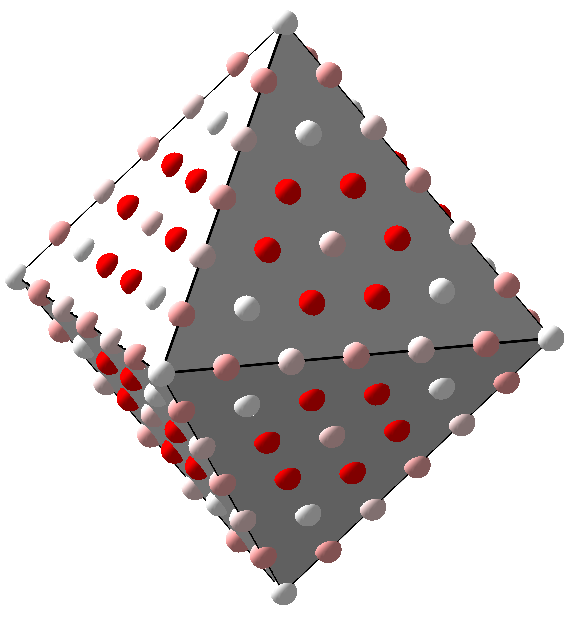}
			\caption{$r=6$}
		\end{subfigure}
		\quad
		\begin{subfigure}{.4\textwidth}
			\centering
			\includegraphics[width=6cm]{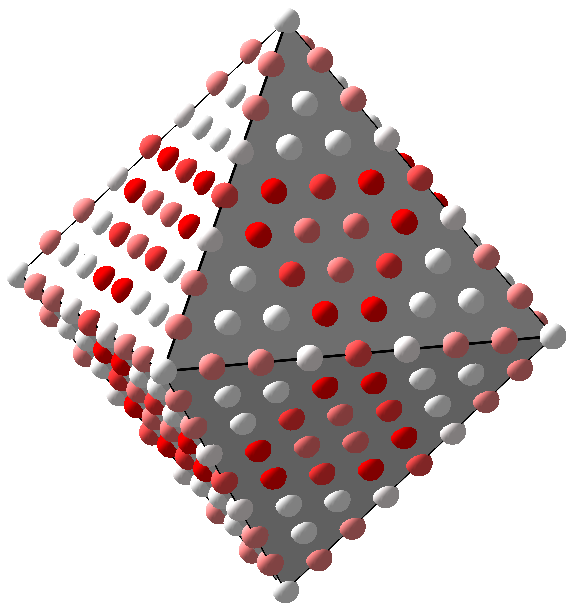}
			\caption{$r=8$}
		\end{subfigure}
		\quad
		\begin{subfigure}{.4\textwidth}
			\centering
			\includegraphics[width=6cm]{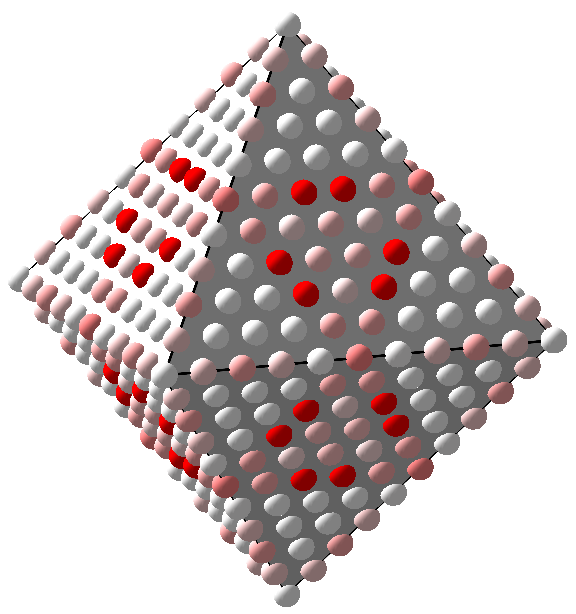}
			\caption{$r=10$}
		\end{subfigure}
		\quad
		\begin{subfigure}{.4\textwidth}
			\centering
			\includegraphics[width=6cm]{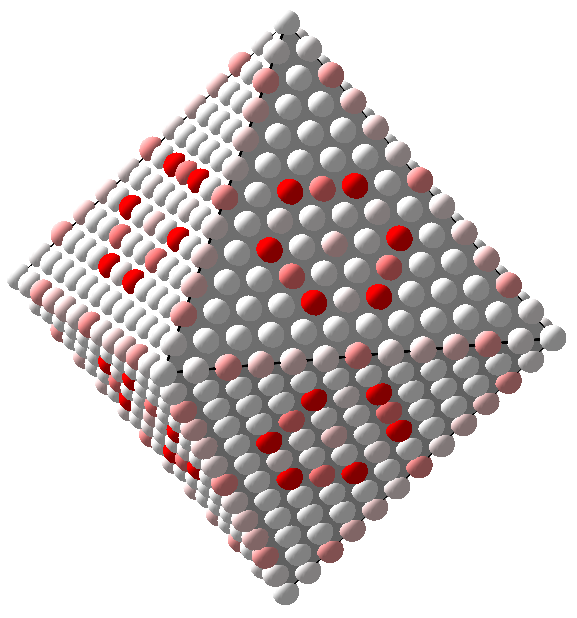}
			\caption{$r=12$}
		\end{subfigure}
		\quad
		\begin{subfigure}{.4\textwidth}
			\centering
			\includegraphics[width=6cm]{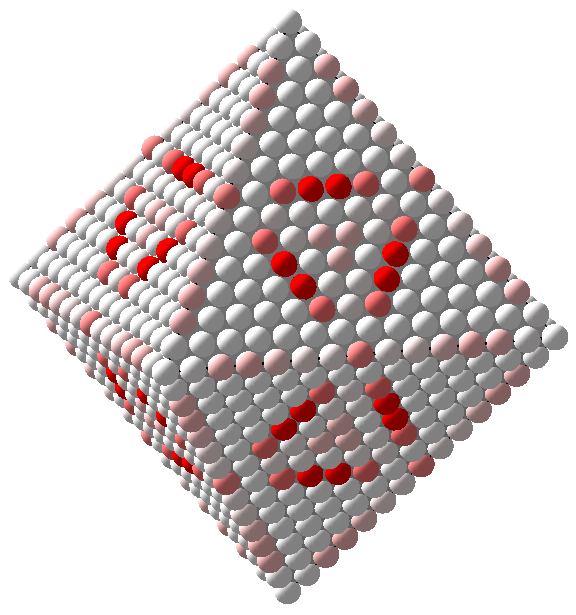}
			\caption{$r=14$}
		\end{subfigure}
		\caption{The coefficients $c_\alpha$ for $F(r,9)$ in the case of $\RootB[3]$, indicated by the intensity of the color as $\mathrm{RGB}(1,1-(c_\alpha-c_{\min})/(c_{\max}-c_{\min}),1-(c_\alpha-c_{\min})/(c_{\max}-c_{\min}))$.}
		\label{B3L1NormCoefficients}
	\end{center}
\end{figure}

\begin{figure}[H]
	\begin{minipage}{0.2\textwidth}
		\begin{flushright}
			\begin{overpic}[width=1\textwidth,,tics=10]{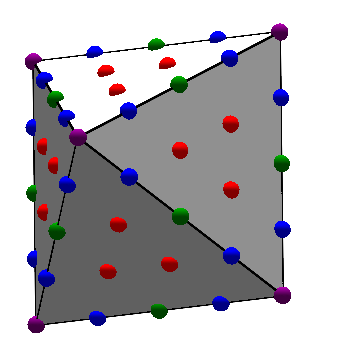}
				\put (83, 90) {\large \textcolor{violet}{$\displaystyle 0.01754$}}
				\put (83, 73) {\large \textcolor{blue}{$\displaystyle 0.22680$}}
				\put (70, 60) {\large \textcolor{red}{$\displaystyle 0.59375$}}
				\put (83, 47) {\large \textcolor{OliveGreen}{$\displaystyle 0.16189$}}
			\end{overpic}
		\end{flushright}
	\end{minipage} \hfill
	\begin{minipage}{0.75\textwidth}
		\begin{table}[H]
			\begin{center}
				\begin{tabular}{|c|c||c|c|}
					\hline
					\multicolumn{2}{|c|}{$\RootC[3]$}								&	\multicolumn{2}{|c|}{$\RootB[3]$}								\\
					\hline
					\hline
					$1-1/F(4,7)$	&	$c_\alpha$									&	$1-1/F(4,9)$	&	$c_\alpha$									\\
					\hline
					$6.28148$		&	\textcolor{violet}{$c_{400}=0.01752	$}		&	$6.28148$		&	\textcolor{violet}{$c_{400}=0.01754$}		\\
					&	\textcolor{blue}{$c_{210}=0.22681	$}		&					&	\textcolor{blue}{$c_{210}=0.22680$}			\\
					&	\textcolor{red}{$c_{101}=0.59380	$}		&					&	\textcolor{red}{$c_{102}=0.59375$}			\\
					&	\textcolor{OliveGreen}{$c_{020}=0.16185$}	&					&	\textcolor{OliveGreen}{$c_{020}=0.16189$}	\\
					\hline
				\end{tabular}
			\end{center}
		\end{table}
	\end{minipage}
	\centering
	\caption{The crosspolytope with radius $r=4$ and the obtained optimal coefficients. Supporting points $\weight=\alpha_1\,\fweight{1}+\alpha_2\,\fweight{2}+\alpha_3\,\fweight{3}$ in the same Weyl group orbit have the same coefficients $c_\alpha$, denoted by red, blue, green and purple dots.}
	\label{C3B3L1NormTable}
\end{figure}

\begin{remark}
This computation confirms the lower bound $7$ from \emph{\cite[Prop. 9]{furedi04}}.
\end{remark}
	
\subsubsection*{Dimension $n = 4$}

\begin{table}[H]
\begin{center}
	\begin{tabular}{|c|c||c|c|c|c|c|c|c|c|}
		\hline
		$\Roots$	&	$d \backslash r$	&	$2$			&	$4$			&	$6$			&
		$8$			&	$10$		&	$12$		&	$14$		\\
		\hline
		\hline
		$\RootB[4]$	&	$4$					&	$8.00000$	&	$10.33968$	&	$9.09234$	&
		$10.33968$	&	$-		$	&	$-		$	&	$-		$	\\
		\hline
		&	$5$					&	$8.00000$	&	$10.33969$	&	$9.72339$	&
		$10.33969$	&	$9.17503$	&	$-		$	&	$-		$	\\
		\hline
		&	$6$					&	$8.00000$	&	$10.83655$	&	$10.18050$	&
		$10.33969$	&	$9.90514$	&	$10.33968$	&	$-		$	\\
		\hline
		&	$7$					&	$8.00000$	&	$10.86019$	&	$10.51696$	&
		$10.51282$	&	$10.16103$	&	$10.33968$	&	$10.03938$	\\
		\hline
		\hline
		$\RootC[4]$	&	$4$					&	$8.00000$	&	$10.33993$	&	$9.72014$	&
		$10.33968$	&	$-		$	&	$-		$	&	$-		$	\\
		\hline
		&	$5$					&	$8.00000$	&	$10.83902$	&	$10.07664$	&
		$10.33968$	&	$9.94864$	&	$-		$	&	$-		$	\\
		\hline
		\hline
		$\RootD[4]$	&	$4$					&	$8.00000$	&	$10.34750$	&	$9.08887$	&
		$10.33969$	&	$-		$	&	$-		$	&	$-		$	\\
		\hline
		&	$5$					&	$8.00000$	&	$10.39184$	&	$9.72430$	&
		$10.34011$	&	$9.52887$	&	$-		$	&	$-		$	\\
		\hline
		&	$6$					&	$8.00000$	&	$10.83844$	&	$10.34886$	&
		$10.35578$	&	$9.97888$	&	$10.33971$	&	$-		$	\\
		\hline
	\end{tabular}
\end{center}
\caption{The bound $\chi (\Z^4,\,\mathbb{B}^1_r) \geq 1-1/F(r,d)$.}
\label{B4C4D4L1Table2}
\end{table}

The value $\chi (\Z^4,\,\mathbb{B}^1_2)=8$ is obtained immediately with $F(2,1)$. The highest value is $F(4,7)$ for $\RootB[4]$. None of the computed bounds $F(r,d)$ is stable in $d$ and we are limited by the size of the semi--definite program, see \Cref{SDPMatrixNumberSizeTable}. Again, in the case of $\RootB[4]$ for example, we see that $F(4,7)\geq F(8,7)$, because we do not take the limit.

\begin{remark}
This computation improves the lower bound $9$ from \emph{\cite[Prop. 9]{furedi04}} by $+2$.
\end{remark}

\subsection{The chromatic number of $\R^n$ for Vorono\"{i} cells}

Finally we consider the case of the Euclidean space $V=\R^n$ as a set of vertices, where the avoided set $S=\partial\mathcal{P}$ is the boundary of a convex centrally--symmetric polytope $\mathcal{P}$. This setting was studied in \cite{BBMP}, giving bounds on $\chi_m(\R^n,\partial\mathcal{P})$ without using spectral bounds. There it was proven that $\chi_m(\R^n,\partial\mathcal{P}) \leq 2^n$ whenever $\mathcal{P}$ tiles $\R^n$ and equality is conjectured. We now investigate the strength of the spectral bound for certain instances of this graph.

\begin{figure}[H]
	\begin{center}
		\includegraphics[height=4cm]{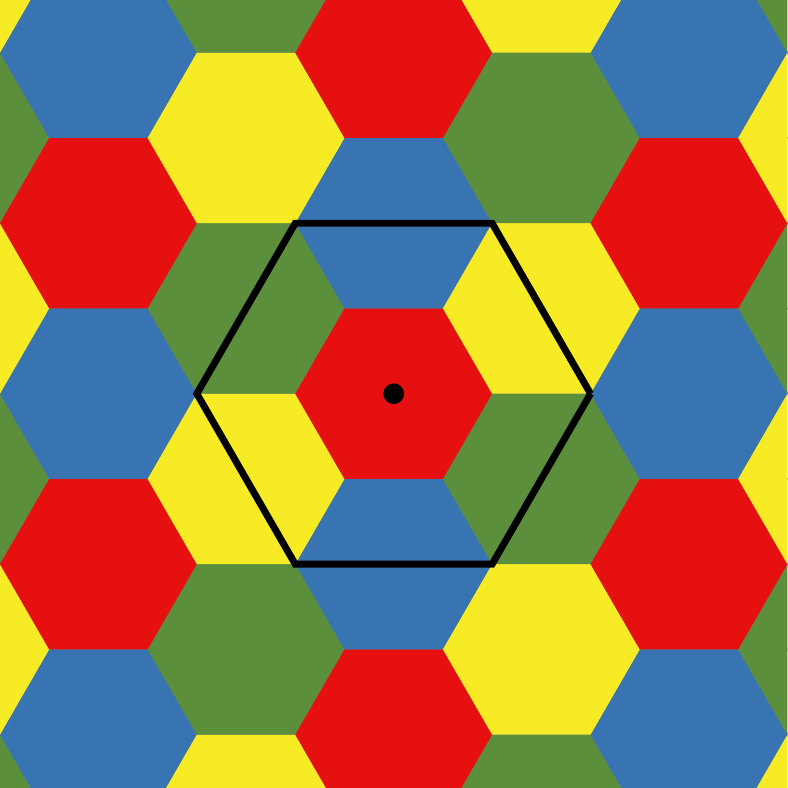}
	\end{center}
	\caption{The chromatic number of $\R^2$ for the hexagon is $2^2=4$ \cite{BBMP}.}\label{fig_chromaticnumberhexagon}
\end{figure}

Given a Weyl group $\weyl$ associated to a root systems in $\R^n$, the Vorono\"{i} cell of the coroot lattice $\Corootlattice$ is a convex centrally--symmetric polytope, invariant under $\weyl$ and tiles $\R^n$ by $\Corootlattice$--translation, see \Cref{eq_VoronoiTiles}. If the root system is irreducible with highest root $\highestroot$, then we have $\Vor( \Corootlattice ) = \weyl\,\fundom$, where
\[
\fundom
=	\{ u \in \R^n \,\vert\, \forall \, 1\leq i \leq n: \, \sprod{u,\roots_i} \geq 0 \mbox{ and } \sprod{u,\highestroot} \leq 1 \}
\]
is a fundamental domain of the affine Weyl group $\weyl\ltimes \Corootlattice$, see \Cref{prop_FundomAffineWeyl}. In particular, the part of the boundary $\partial\Vor( \Corootlattice ) \cap \overline{\PC}$, which is also contained in the fundamental Weyl chamber, lies on a hyperplane parallel to $\sprod{ \cdot , \highestroot^\vee } = 0$. Rescaling the polytope $\Vor(\Corootlattice)$ by a factor $\tilde{r}>0$ does not affect the chromatic number, that is, $\chi_m(\R^n,\partial\Vor( \Corootlattice )) = \chi_m(\R^n,\partial(\tilde{r}\,\Vor( \Corootlattice )))$. If we choose $\tilde{r}= r\,\sprod{\highestroot,\highestroot}/2$ for some $0 \neq r \in \N$, then $\partial(\tilde{r}\,\Vor( \Corootlattice )) \cap \Weights \neq \emptyset$ and we obtain a hierarchy of lower bounds
\begin{equation}\label{eq_VoronoiLowerBound}
		\chi_m ( \R^n , \partial \Vor ( \Corootlattice ) )
\geq	\ldots
\geq 	1-\frac{1}{F(4r)}
\geq 	1-\frac{1}{F(2r)}
\geq 	1-\frac{1}{F(r)}
\geq 	1-\frac{1}{F(1)},
\end{equation}
where $F(r):=F(S_r)$ is as in \Cref{thm_ChromaticChebyshevBound} with $S_r:=\weyl\{ u \in \overline{\PC} \,\vert\, \sprod{u,\highestroot^\vee} = r \}$. 

\begin{remark}
The quantity $1-1/F(r)$ is a lower bound for $\chi_m ( \R^n , \partial \Vor ( \Corootlattice ) )$. More precisely, we have
\[
		\chi_m ( \R^n , \partial \Vor ( \Corootlattice ) )
\geq	\chi ( \Weights , S_r )
\geq	1-\frac{1}{F(r)}
\]
and $F(r)$ is the minimum of the Fourier transformation of the optimal measure $\measure$ (with mass $1$) in \emph{\Cref{thm_spectral_bound}} for the graph $G(\Weights,S_r)$.
\end{remark}

To compute $F(r)$ numerically, we use \Cref{coro_ChromBoundLasserre} and write $F(r,d) := F(S_r,d)$. Note that, in this case, $F(r,d) \geq F(\ell\,r,d)$ is only certain for $\ell\in\N$ when $d\to \infty$.

\begin{figure}[H]
	\begin{center}
		\begin{overpic}[height=4cm,,tics=10]{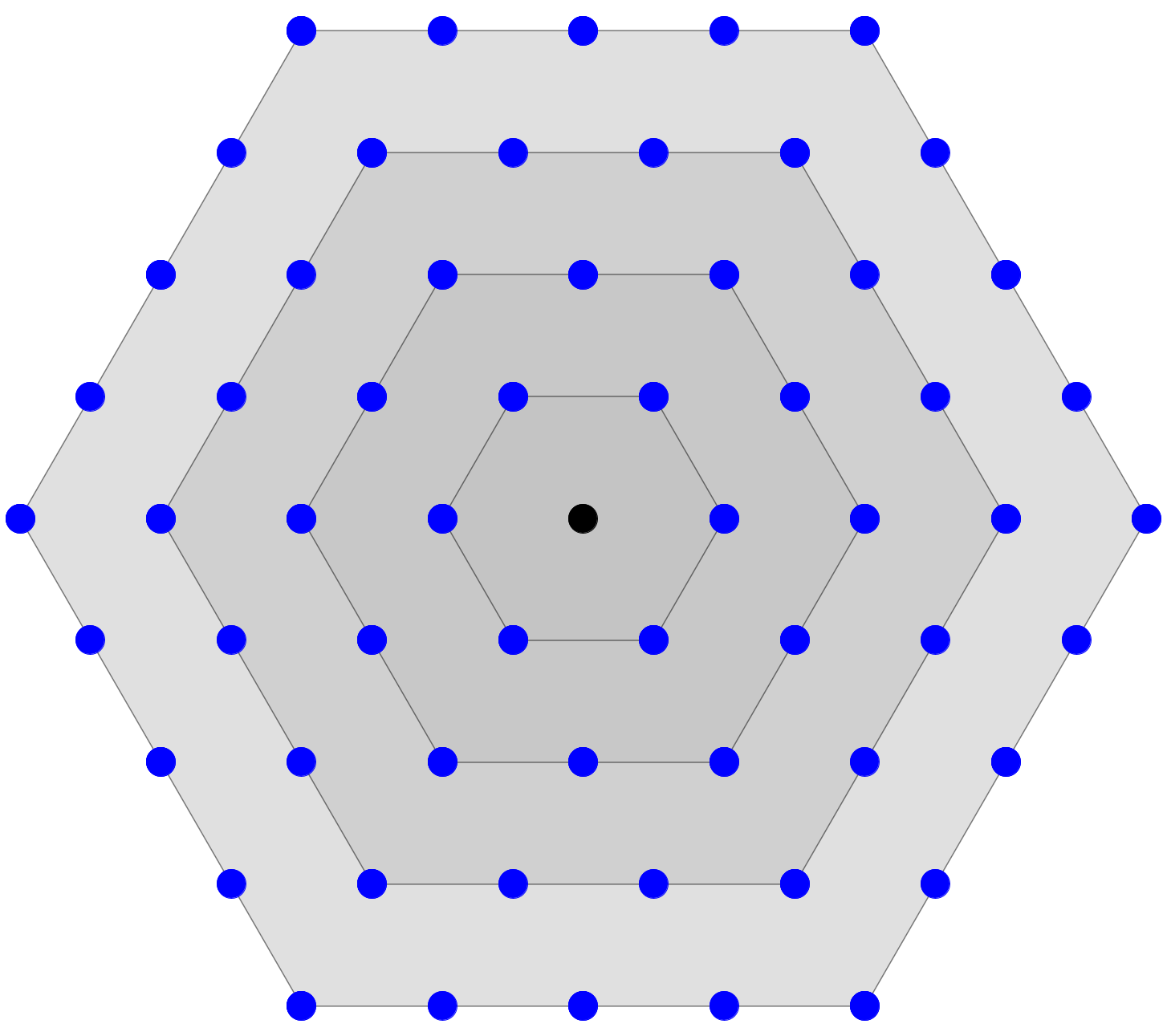}
			\put (52, 48) {\small \textcolor{red}{$\displaystyle r=1$}}
			\put (58, 58) {\small \textcolor{red}{$\displaystyle r=2$}}
			\put (64, 68) {\small \textcolor{red}{$\displaystyle r=3$}}
			\put (70, 78) {\small \textcolor{red}{$\displaystyle r=4$}}
		\end{overpic}
	\end{center}
	\caption{Rescaling the hexagon increases the number of weights $S_r\cap\Weights$ on the boundary.}\label{fig_UnitDist}
\end{figure}

\subsubsection{The hexagon in $\R^2$}

The hexagon in $\R^2\cong \R^3/\langle [1,1,1]^t\rangle$, as it has appeared several times now in the article, is the Vorono\"{i} cell of the coroot lattice $\Corootlattice$ for $\RootA[2]$ and $\RootG[2]$. It has $6$ vertices and $6$ edges.

For $\RootA[2]$, the vertices of the hexagon are the orbits of the fundamental weights $\fweight{1}$ and $\fweight{2}$. The centers of the edges are the orbit of $(\fweight{1}+\fweight{2})/2$. We fix a hierarchy order $d\geq 3$ and consider $F(r,d)$ for $1 \leq r\leq 2d$.

For $\RootG[2]$, the vertices are the orbit of $\fweight{1}/3$. The centers of edges are the orbit of $\fweight{2}/6$. If $r\in\N$ is not a multiple of $3$, then $S_r=\emptyset$. Thus we consider $F(3r,d)$ for $1 \leq r\leq 2d$, but still write $F(r,d)$.

The first column indicates the root system, that is, $\RootA[2]$ or $\RootG[2]$. Then the rows are indexed by the relaxation order $d$ and the columns by the radius $r$.

\begin{table}[H]
	\begin{center}
		\scalebox{0.65}{
			\begin{tabular}{|c|c||c|c|c|c|c|c|c|c|c|c|c|c|c|c|}
				\hline
				$\Roots$	&	$d \backslash r$	&	$1$			&	$2$			&	$3$			&	$4$			&	$5$			&	$6$			&	$7$			&	$8$			&
				$9$			&	$10$		&	$11$		&	$12$		&	$13$		&	$14$		\\
				\hline
				\hline
				$\RootA[2]$	&	$3$					&	$2.99386$	&	$3.57143$	&	$3.52451$	&	$3.57143$	&	$3.37484$	&	$3.57143$	&	$-		$	&	$-		$	&
				$-		$	&	$-		$	&	$-		$	&	$-		$	&	$-		$	&	$-		$	\\
				\hline
				&	$4$					&	$3.00000$	&	$3.57143$	&	$3.52911$	&	$3.57143$	&	$3.54698$	&	$3.57143$	&	$3.47461$	&	$3.57143$	&
				$-		$	&	$-		$	&	$-		$	&	$-		$	&	$-		$	&	$-		$	\\
				\hline
				&	$5$					&	$3.00000$	&	$3.57143$	&	$3.52912$	&	$3.57143$	&	$3.54789$	&	$3.57143$	&	$3.54016$	&	$3.57143$	&
				$3.51384$	&	$3.57143$	&	$-		$	&	$-		$	&	$-		$	&	$-		$	\\
				\hline
				&	$6$					&	$3.00000$	&	$3.57143$	&	$3.52912$	&	$3.57143$	&	$3.54789$	&	$3.57143$	&	$3.54786$	&	$3.57143$	&
				$3.55920$	&	$3.57143$	&	$3.47623$	&	$3.57143$	&	$-		$	&	$-		$	\\
				\hline
				&	$7$					&	$3.00000$	&	$3.57143$	&	$3.52912$	&	$3.57143$	&	$3.54789$	&	$3.57143$	&	$3.55183$	&	$3.57143$	&
				$3.55921$	&	$3.57143$	&	$3.51433$	&	$3.57143$	&	$3.14739$	&	$3.57143$	\\
				\hline
				&	$8$					&	$3.00000$	&	$3.57143$	&	$3.52912$	&	$3.57143$	&	$3.54789$	&	$3.57143$	&	$3.55347$	&	$3.57143$	&
				$3.55921$	&	$3.57143$	&	$3.53571$	&	$3.57143$	&	$3.25411$	&	$3.57143$	\\
				\hline
				\hline
				$\RootG[2]$	&	$3$					&	$2.99732$	&	$3.57143$	&	$3.39930$	&	$3.57143$	&	$2.47997$	&	$3.57143$	&	$-		$	&	$-		$	&
				$-		$	&	$-		$	&	$-		$	&	$-		$	&	$-		$	&	$-		$	\\
				\hline
				&	$4$					&	$2.99962$	&	$3.57143$	&	$3.52821$	&	$3.57143$	&	$3.41805$	&	$3.57143$	&	$2.54024$	&	$3.57143$	&
				$-		$	&	$-		$	&	$-		$	&	$-		$	&	$-		$	&	$-		$	\\
				\hline
				&	$5$					&	$3.00000$	&	$3.57143$	&	$3.52908$	&	$3.57143$	&	$3.49102$	&	$3.57143$	&	$2.76603$	&	$3.57143$	&
				$2.45902$	&	$3.57143$	&	$-		$	&	$-		$	&	$-		$	&	$-		$	\\
				\hline
				&	$6$					&	$3.00000$	&	$3.57143$	&	$3.52912$	&	$3.57143$	&	$3.52318$	&	$3.57143$	&	$3.39290$	&	$3.57143$	&
				$2.70265$	&	$3.57143$	&	$2.98423$	&	$3.57143$	&	$-		$	&	$-		$	\\
				\hline
				&	$7$					&	$3.00000$	&	$3.57143$	&	$3.52912$	&	$3.57143$	&	$3.54301$	&	$3.57143$	&	$3.54780$	&	$3.57143$	&
				$3.53627$	&	$3.57143$	&	$3.28144$	&	$3.57143$	&	$2.50993$	&	$3.57143$	\\
				\hline
				&	$8$					&	$3.00000$	&	$3.57143$	&	$3.52912$	&	$3.57143$	&	$3.54656$	&	$3.57143$	&	$3.55294$	&	$3.57143$	&
				$3.54181$	&	$3.57143$	&	$3.54139$	&	$3.57143$	&	$3.13764$	&	$3.57143$	\\
				\hline
			\end{tabular}
		}
	\end{center}
	\caption{The bound $\chi_m (\R^2,\,\partial \Vor(\Corootlattice(\RootA[2]))) = \chi_m (\R^2,\,\partial \Vor(\Corootlattice(\RootG[2]))) \geq 1-1/F(r,d)$ for the hexagon.}
	\label{A2HexagonTable}
\end{table}

For $r=1$, there is no choice for the coefficients $c_\weight$, as $S_1$ only contains one element in both cases $\RootA[2]$ and $\RootG[2]$. The value $F(1)$ is $-1/2$. This gives spectral bound $3$ and is obtained from $F(r,d)$ for $d\geq 4$, respectively $d\geq 5$. Furthermore, this fits with the bound from \Cref{thm_LatticeChromatic}, where $\chi(\Corootlattice) \geq n$ for $\RootA$.

For $r\geq 2$, the best possible bound we obtained is already assumed at $r=2$ and $d=3$. We display the optimal coefficients for the corresponding measure below. This bound is assumed in all $F(r,d)$ with $r$ even at lowest possible order. For $r$ odd, the value converges but does not stabilize. 

Although we recover that the chromatic number of $\R^2$ for the hexagon is $4$, see \Cref{fig_UnitDist}, our computations indicate that the spectral bound is not sharp and never will be with $r,d\to\infty$.

\begin{figure}[H]
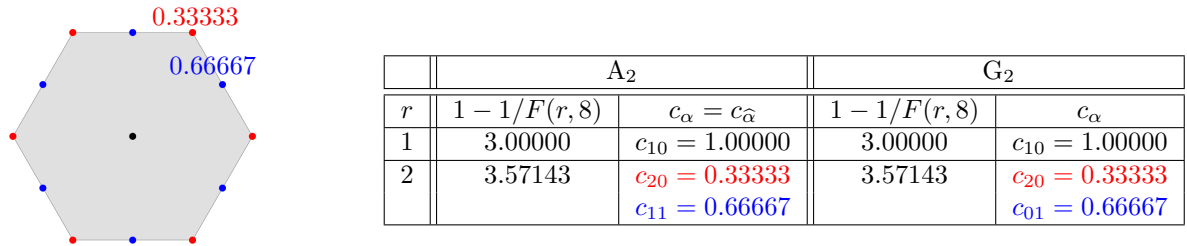

	\begin{minipage}{0.2\textwidth}
		\begin{flushright}
			\begin{overpic}[width=1\textwidth,,tics=10]{A2Level2Coeff}
				\put (58,95) {\textcolor{red}{$\displaystyle 0.33333$}}
				\put (65,75) {\textcolor{blue}{$\displaystyle 0.66667$}}
			\end{overpic}
		\end{flushright}
	\end{minipage} \hfill
	\begin{minipage}{0.75\textwidth}
		\begin{table}[H]
			\begin{center}
				\scalebox{1}{
					\begin{tabular}{|c||c|c||c|c|}
						\hline
						&	\multicolumn{2}{c||}{$\RootA[2]$}					&	\multicolumn{2}{c|}{$\RootG[2]$}			\\
						\hline
						\hline
						$r$		&	$1-1/F(r,8)$	&	$c_\alpha=c_{\conj{\alpha}}$
						&	$1-1/F(r,8)$	&	$c_\alpha$						\\
						\hline
						$1$		&	$3.00000$		&	$c_{10}=1.00000$				&	$3.00000$	&	$c_{10}=1.00000$	\\
						\hline
						$2$		&	$3.57143$		&	\textcolor{red}{$c_{20}=0.33333$}				&	$3.57143$	&	\textcolor{red}{$c_{20}=0.33333$}	\\
						&					&	\textcolor{blue}{$c_{11}=0.66667$}				&				&	\textcolor{blue}{$c_{01}=0.66667$}	\\
						\hline
					\end{tabular}
				}
			\end{center}
		\end{table}
	\end{minipage}
	\centering
	\caption{The scaled Vorono\"{i} cell and the optimal coefficients for $F(2,8)$. Supporting points $\weight=\alpha_1\,\fweight{1}+\alpha_2\,\fweight{2}$ in the same Weyl group orbit and their additive inverse $\conj{\weight}$ have the same coefficients $c_\alpha=c_{\conj{\alpha}}$, denoted by either red or blue dots.}
	\label{HexagonTable}
\end{figure}

From \Cref{HexagonTable}, we recover the coefficients $1/3$ for the vertices and $2/3$ for the centers of faces, indicating the best possible discrete measure. Indeed, for $r\in\N$, we have
\begin{equation}\label{remark_A2G2Min}
\begin{split}
	F(2r)
=&	\begin{cases}
	\min\limits_{z\in\Image} \frac{2}{3} \, T_{r\,r}(z) + \frac{1}{6} \, (T_{2r\,0}(z)+T_{0\,2r}(z)) = \min\limits_{z\in\Image} \frac{2}{3} \, T_{1\,1}(z) +  + \frac{1}{6} \, (T_{2\,0}(z)+T_{0\,2}(z)) , & \tbox{if} \Roots=\RootA[2]\\
	\min\limits_{z\in\Image} \frac{2}{3} \, T_{0\,r}(z) + \frac{1}{3} \, T_{2r\,0}(z) = \min\limits_{z\in\Image} \frac{2}{3} \, T_{0\,1}(z) + \frac{1}{3} \, T_{2\,0}(z) , & \tbox{if} \Roots=\RootG[2]
\end{cases}\\
=&	\min\limits_{z\in\Image} 2\,z_1^2 - 2/3\,z_1 - 1/3
=	-7/18
\end{split}
\end{equation}
(for $\RootA[2]$, we have to substitute $z_i=z_1\pm \mathrm{i}\,z_2$, so that $\Image\subseteq \R^2$). In both cases, $1-1/F(2r) = 25/7 \approx 3.57143$. Note that $F(2)$ corresponds to the trigonometric polynomial in \Cref{example_A2PolyRewrite} up to a factor $1/3$.

\begin{figure}[H]
\begin{center}		
	\begin{subfigure}{.4\textwidth}
		\centering
		\includegraphics[height=6.4cm]{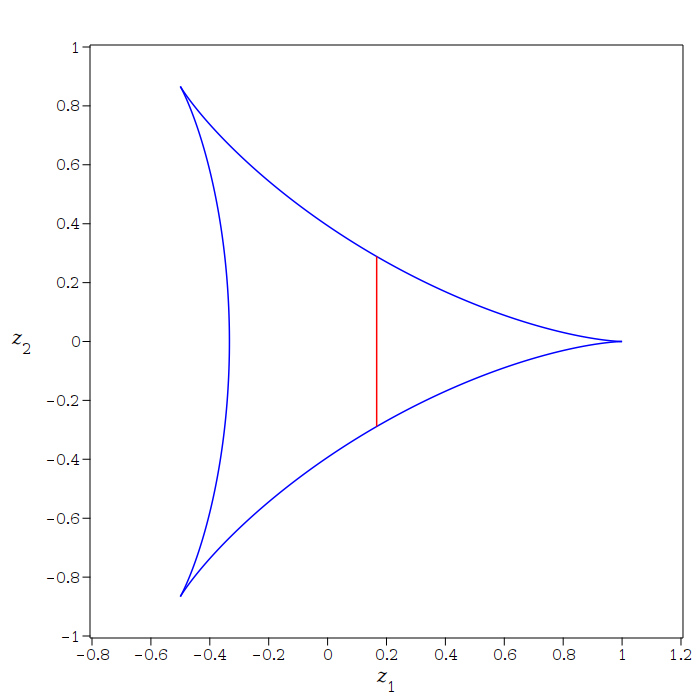}
		\hspace{.1cm}
		\includegraphics[height=6cm]{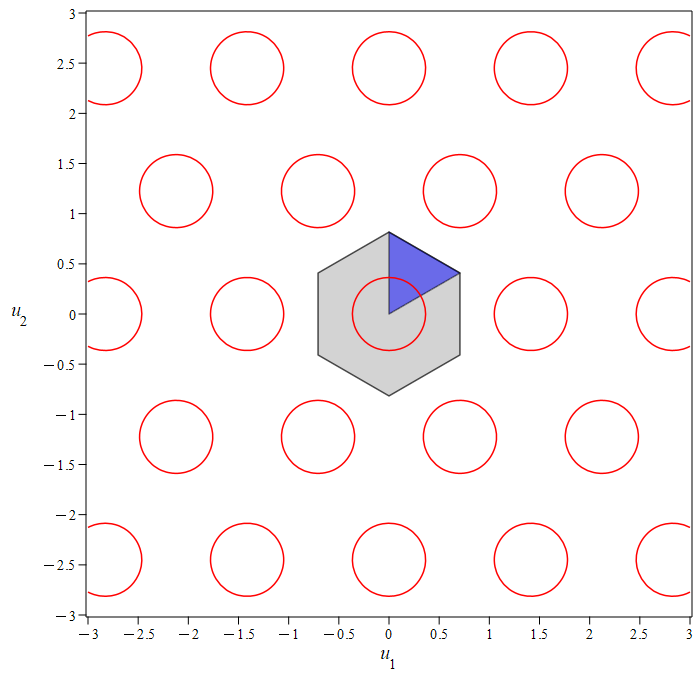}
		\caption{$\RootA[2]$}
		\label{A2Min}
	\end{subfigure}
	\quad
	\begin{subfigure}{.4\textwidth}
		\centering
		\includegraphics[height=6.4cm]{G2DeltoidMin.png}
		\hspace{.1cm}
		\includegraphics[height=6cm]{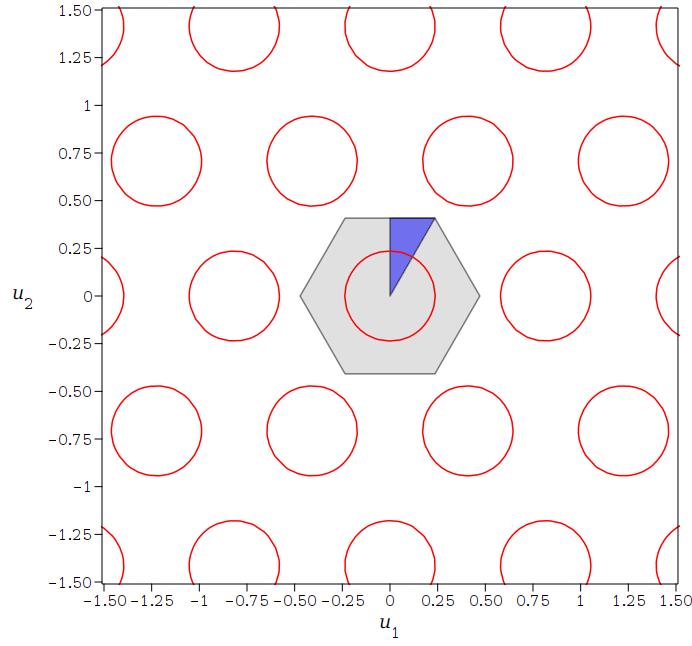}
		\caption{$\RootG[2]$}
		\label{G2Min}
	\end{subfigure}
	\caption{The minimizers $z$ (lines, above) for $F(2r)$ in the image $\Image$ of the generalized cosines with preimages $u$ (ovals, below). In the coordinates $u$, we can observe the periodicity with respect to the coroot lattice $\Corootlattice$ as well as the $\weyl$--invariance, yielding the crystallographic symmetry on the alcove $\fundom$ of $\weyl\ltimes\Corootlattice$ (simplex).}
	\label{fig_A2G2Min}
\end{center}
\end{figure}

\subsubsection{The rhombic dodecahedron in $\R^3$}

The rhombic dodecahedron in $\R^3$ (\Cref{RhombicDodecahedron}) is the Vorono\"{i} cell of the coroot lattice $\Corootlattice$ for $\RootA[3]$ and $\RootB[3]$. It has $14$ vertices, $24$ edges and $12$ faces.

For $\RootA[3]$, the vertices are the orbits of $\fweight{1}$, $\fweight{2}$ and $\fweight{3}$. The centers of the edges are the orbits of $(\fweight{i}+\fweight{2})/2$ for $i=1,2$, and the centers of the facets are the orbit of $(\fweight{1}+\fweight{3})/2$.

For $\RootB[3]$, the vertices are the orbits of $\fweight{1}$ and $\fweight{3}$. The centers of the edges are the orbit of $(\fweight{1}+\fweight{3})/2$, and the centers of the facets are the orbit of $\fweight{2}/2$.

\begin{figure}[H]
	\begin{center}
		\begin{subfigure}{.3\textwidth}
			\centering
			\includegraphics[width=4cm, height=4cm]{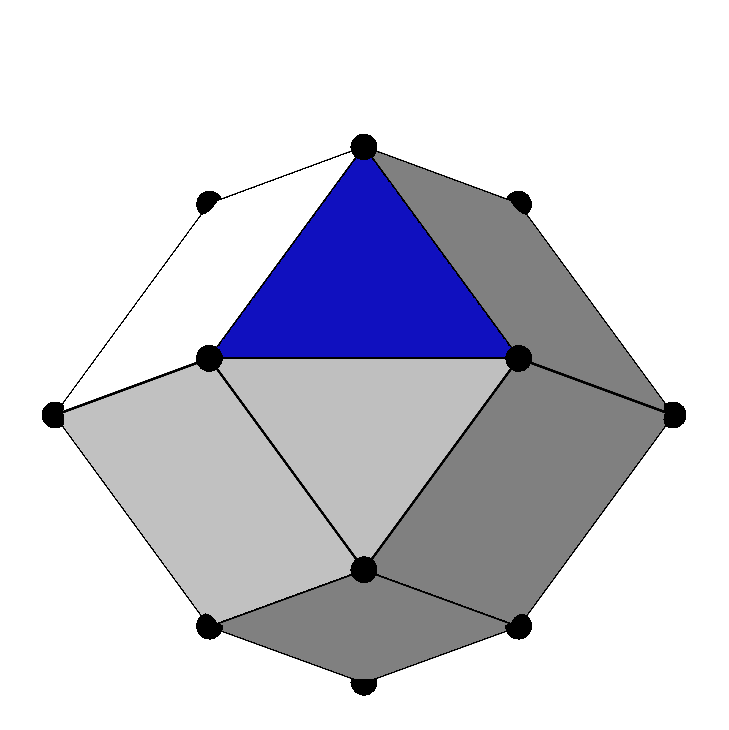}
			\caption{$\RootA[3]$}
			\label{RhombicDodecahedronA}
		\end{subfigure}
		\quad
		\begin{subfigure}{.3\textwidth}
			\centering
			\includegraphics[width=4cm, height=4cm]{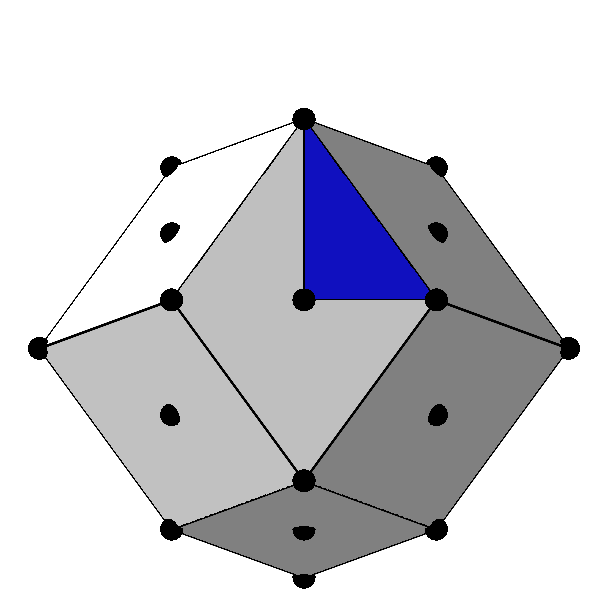}
			\caption{$\RootB[3]$}
			\label{RhombicDodecahedronB}
		\end{subfigure}
		\caption{The rhombic dodecahedron is the Vorono\"{i} cell of the coroot lattice for $\RootA[3]$ and $\RootB[3]$.}
	\label{RhombicDodecahedron}
\end{center}
\end{figure}

\begin{table}[H]
\begin{center}
	\scalebox{0.65}{
		\begin{tabular}{|c|c||c|c|c|c|c|c|c|c|c|c|c|c|c|c|}
			\hline
			$\Roots$	&	$d \backslash r$	&	$1$			&	$2$			&	$3$			&	$4$			&	$5$			&	$6$			&	$7$			&	$8$			&
			$9$			&	$10$		&	$11$		&	$12$		&	$13$		&	$14$		\\
			\hline
			\hline
			$\RootA[3]$	&	$4$					&	$3.99424$	&	$6.10767$	&	$5.86933$	&	$6.10766$	&	$5.81858$	&	$6.10766$	&	$4.77576$	&	$6.10766$	&
			$-		$	&	$-		$	&	$-		$	&	$-		$	&	$-		$	&	$-		$	\\
			\hline
			&	$5$					&	$3.99611$	&	$6.10767$	&	$5.86964$	&	$6.10766$	&	$5.90988$	&	$6.10767$	&	$5.85369$	&	$6.10766$	&
			$5.46888$	&	$6.10766$	&	$-		$	&	$-		$	&	$-		$	&	$-		$	\\
			\hline
			&	$6$					&	$3.99653$	&	$6.10767$	&	$5.86972$	&	$6.10767$	&	$5.93658$	&	$6.10767$	&	$5.85762$	&	$6.10766$	&
			$5.85825$	&	$6.10766$	&	$3.78978$	&	$6.10766$	&	$-		$	&	$-		$	\\
			\hline
			&	$7$					&	$3.99702$	&	$6.10767$	&	$5.86988$	&	$6.10767$	&	$5.94146$	&	$6.10766$	&	$5.96334$	&	$6.10767$	&
			$5.85986$	&	$6.10766$	&	$4.12186$	&	$6.10766$	&	$-		$	&	$6.10766$	\\
			\hline
			&	$8$					&	$3.99719$	&	$6.10767$	&	$5.86992$	&	$6.10767$	&	$5.94327$	&	$6.10767$	&	$6.05399$	&	$6.10767$	&
			$5.86357$	&	$6.10766$	&	$5.59839$	&	$6.10766$	&	$3.88490$	&	$6.10766$	\\
			\hline
			\hline
			$\RootB[3]$	&	$3$					&	$3.83791$	&	$6.10767$	&	$3.39918$	&	$6.10766$	&	$-		$	&	$6.10766$	&	$-		$	&	$-		$	&
			$-		$	&	$-		$	&	$-		$	&	$-		$	&	$-		$	&	$-		$	\\
			\hline
			&	$4$					&	$3.84571$	&	$6.10767$	&	$4.11626$	&	$6.10766$	&	$-		$	&	$6.10766$	&	$-		$	&	$6.10766$	&
			$-		$	&	$-		$	&	$-		$	&	$-		$	&	$-		$	&	$-		$	\\
			\hline
			&	$5$					&	$3.98454$	&	$6.10767$	&	$5.80542$	&	$6.10766$	&	$5.08174$	&	$6.10767$	&	$-		$	&	$6.10766$	&
			$-		$	&	$6.10766$	&	$-		$	&	$-		$	&	$-		$	&	$-		$	\\
			\hline
			&	$6$					&	$3.99667$	&	$6.10767$	&	$5.87057$	&	$6.10767$	&	$5.86644$	&	$6.10767$	&	$5.82630$	&	$6.10766$	&
			$-		$	&	$6.10766$	&	$-		$	&	$6.10766$	&	$-		$	&	$-		$	\\
			\hline
			&	$7$					&	$3.99872$	&	$6.10767$	&	$5.87057$	&	$6.10767$	&	$5.94578$	&	$6.10766$	&	$5.96989$	&	$6.10767$	&
			$5.88810$	&	$6.10766$	&	$-		$	&	$6.10766$	&	$-		$	&	$6.10766$	\\
			\hline
			&	$8$					&	$3.99925$	&	$6.10767$	&	$5.87057$	&	$6.10767$	&	$5.96374$	&	$6.10767$	&	$5.99825$	&	$6.10767$	&
			$5.94949$	&	$6.10766$	&	$5.92157$	&	$6.10766$	&	$5.31568$	&	$6.10766$	\\
			\hline
			&	$9$					&	$3.99972$	&	$6.10767$	&	$5.87057$	&	$6.10767$	&	$5.97050$	&	$6.10767$	&	$6.00193$	&	$6.10767$	&
			$5.98345$	&	$6.10767$	&	$5.98654$	&	$6.10766$	&	$5.93977$	&	$6.10766$	\\
			\hline
		\end{tabular}
	}
\end{center}
	\caption{The bound $\chi_m (\R^3,\,\partial \Vor(\Corootlattice(\RootA[3]))) = \chi_m (\R^3,\,\partial \Vor(\Corootlattice(\RootB[3]))) \geq 1-1/F(r,d)$ for the rhombic dodecahedron.}
\label{A3RhombicTable}
\end{table}

For $r=1$, the numerically computed bound seems to converge to $4$. For $r\geq 2$, the best possible bound we obtain is already assumed at $r=2$ and $d=3$, respectively $d=4$. We display the optimal coefficients for the corresponding measure below. This bound is approximately assumed in all $F(r,d)$ with $r$ even at lowest possible order $d$. For $r$ odd, the value does not stabilize with $r$ or $d$ growing. $\RootA[3]$ and $\RootB[3]$ give the same coefficients for the same supporting points.
As in the case of the hexagon, the gap between the spectral bound for such discrete measures and the actual chromatic number of $\R^3$ for the rhombic dodecahedron (known to be $8$ by \cite{BBMP}) seems quite large.

\begin{figure}[H]
\begin{minipage}{0.2\textwidth}
	\begin{flushright}
		\begin{overpic}[width=1\textwidth,,tics=10]{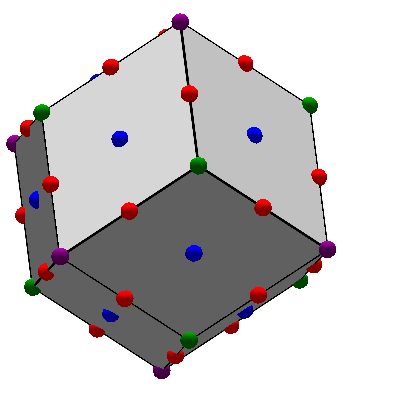}
			\put (47, 97) {\large \textcolor{violet}{$\displaystyle 0.10283$}}
			\put (67, 84) {\large \textcolor{red}{$\displaystyle 0.24388$}}
			\put (81, 73) {\large \textcolor{OliveGreen}{$\displaystyle 0.06050$}}
			\put (70, 61) {\large \textcolor{blue}{$\displaystyle 0.59279$}}
		\end{overpic}
	\end{flushright}
\end{minipage} \hfill
\begin{minipage}{0.75\textwidth}
	\begin{table}[H]
		\begin{center}
			\begin{tabular}{|c||c|c||c|c|}
				\hline
				&	\multicolumn{2}{c||}{$\RootA[3]$}												&	\multicolumn{2}{c|}{$\RootB[3]$}								\\
				\hline
				\hline
				$r$					&	$1-1/F(r,8)$	&	$c_\alpha=c_{\conj{\alpha}}$			&	$1-1/F(r,9)$	&	$c_\alpha$									\\
				\hline
				$1$					&	$3.99719$		&	$c_{010}=0.33298$						&	$3.99972$		&	$c_{100}=0.33332$							\\
				&					&	$c_{100}=0.66702$											&					&	$c_{001}=0.66668$							\\
				\hline
				$2$					&	$6.10767$		&	\textcolor{violet}{$c_{020}=0.10282$}	&	$6.10767$		&	\textcolor{violet}{$c_{200}=0.10283$}		\\
				&					&	\textcolor{red}{$c_{110}=0.24392$}							&					&	\textcolor{red}{$c_{101}=0.24388$}			\\
				&					&	\textcolor{OliveGreen}{$c_{200}=0.06050$}					&					&	\textcolor{OliveGreen}{$c_{002}=0.06050$}	\\
				&					&	\textcolor{blue}{$c_{101}=0.59276$}							&					&	\textcolor{blue}{$c_{010}=0.59279$}			\\
				\hline
			\end{tabular}
		\end{center}
	\end{table}
\end{minipage}
\centering
\caption{The scaled Vorono\"{i} cell and the obtained optimal coefficients. Supporting points $\weight=\alpha_1\,\fweight{1}+\alpha_2\,\fweight{2}+\alpha_3\,\fweight{3}$ in the same Weyl group orbit and their additive inverse $\conj{\weight}$ have the same coefficients $c_\alpha=c_{\conj{\alpha}}$, denoted by red, blue, green and purple dots.}
\label{RhombicDodecahedronTable}
\end{figure}

As we can observe, the most weight is put on the center of faces, then on the centers of edges and only small weight is put on the vertices. We investigate the minimizers of the associated sum of generalized Chebyshev polynomials. Similar to \Cref{remark_A2G2Min}, one finds the following.
\begin{enumerate}
	\item For $\Roots = \RootB[3]$, the minimizers for $F(2,8)$ are $z_{\mathrm{min}} \approx	(0.05927, z_2, 0.22212)$ with $z_2\in\R$ so that $z_{\mathrm{min}}\in\Image$.	
	\item For $\Roots = \RootA[3]$, the minimizers for $F(2,8)$ are $z_{\mathrm{min}} \approx	(0.22209, 0.05915, z_3)$ with $z_3\in\R$ so that $z_{\mathrm{min}}\in\Image$.
\end{enumerate}

\begin{figure}[H]
\begin{center}
	\begin{subfigure}{.3\textwidth}
		\centering
		\includegraphics[width=4cm, height=4cm]{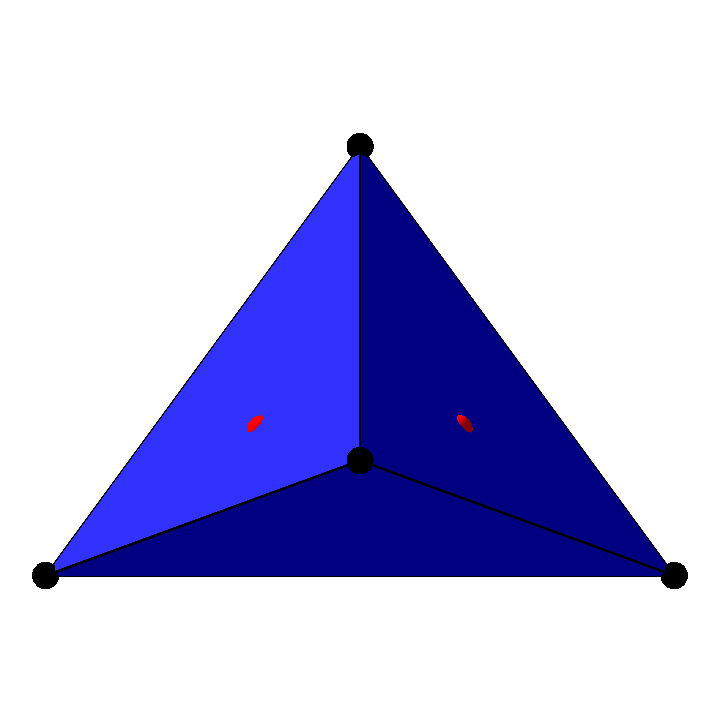}
		\caption{\textcolor{red}{$u_{\mathrm{min}}$}}
		\label{A3MinFundom}
	\end{subfigure}
	\quad
	\begin{subfigure}{.3\textwidth}
		\centering
		\includegraphics[width=4cm, height=4cm]{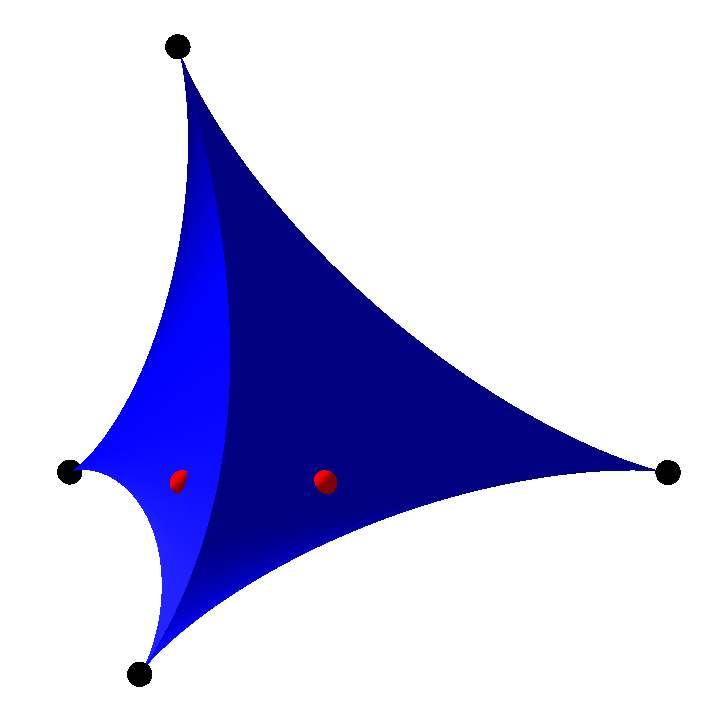}
		\caption{\textcolor{red}{$z_{\mathrm{min}}$}}
		\label{A3MinDeltoid}
	\end{subfigure}
	\caption{In the case of $\RootA[3]$, there are two minimizers \textcolor{red}{$z_{\mathrm{min}} \approx (0.22209, 0.05915, \pm 0.23708)$} for $F(2,8)$ on the boundary of \textcolor{blue}{$\Image$}, the image of the gernalized cosines, with two preimages \textcolor{red}{$u_{\mathrm{min}}\approx (0.40432, \pm 0.15713, 0.17550)$} on the boundary of \textcolor{blue}{$\fundom$}, the fundamental domain of $\weyl\ltimes\Corootlattice$.}
	\label{A3Min}
\end{center}
\end{figure}

\subsubsection{The icositetrachoron in $\R^4$}

The icositetrachoron in $\R^4$ is the Vorono\"{i} cell of the coroot lattice $\Corootlattice$ for $\RootB[4]$ and $\RootD[4]$. It has $24$ vertices, $96$ edges, $96$ faces and $24$ facets. The facets are octahedral cells.

For $\RootB[4]$, the vertices are the orbits of $\fweight{1}$ and $\fweight{4}$. The centers of edges are the orbits of $(\fweight{1}+\fweight{4})/2$ and $\fweight{3}/2$. The centers of faces are the orbit of $(\fweight{1}+\fweight{3})/3$. The centers of facets are the orbit of $\fweight{2}/2$.

For $\RootD[4]$, the vertices are the orbits of $\fweight{1}$, $\fweight{3}$ and $\fweight{4}$. The centers of edges are the orbits of $(\fweight{1}+\fweight{3})/2$, $(\fweight{1}+\fweight{4})/2$ and $(\fweight{3}+\fweight{4})/2$. The centers of faces are the orbit of $(\fweight{1}+\fweight{3}+\fweight{4})/3$. The centers of facets are the orbit of $\fweight{2}/2$.

\begin{table}[H]
\begin{center}
	\scalebox{0.7}{
		\begin{tabular}{|c|c||c|c|c|c|c|c|c|c|c|c|c|c|}
			\hline
			$\Roots$	&	$d\backslash r$	&	$1$			&	$2$			&	$3$			&	$4$			&	$5$			&	$6$			&	$7$			&
			$8$			&	$9$			&	$10$		&	$11$		&	$12$		\\
			\hline
			\hline
			$\RootB[4]$	&	$4$				&	$3.01160$	&	$10.00001$	&	$-		$	&	$10.00000$	&	$-		$	&	$10.0000$	&	$-		$	&
			$10.00000$	&	$-		$	&	$-		$	&	$-		$	&	$-		$	\\
			\hline
			&	$5$				&	$3.77462$	&	$10.00035$	&	$-		$	&	$10.00000$	&	$-		$	&	$10.00000$	&	$-		$	&
			$10.00000$	&	$-		$	&	$10.00000$	&	$-		$	&	$-		$	\\
			\hline
			&	$6$				&	$3.99453$	&	$10.02433$	&	$9.10927$	&	$10.01295$	&	$8.91701$	&	$10.00001$	&	$4.69147$	&
			$10.00000$	&	$-		$	&	$10.00000$	&	$-		$	&	$10.00000$	\\
			\hline
			&	$7$				&	$3.99961$	&	$10.02434$	&	$9.12574$	&	$10.01902$	&	$9.26148$	&	$10.00819$	&	$9.32108$	&
			$10.00000$	&	$8.35442$	&	$10.00000$	&	$4.15681$	&	$10.00000$	\\
			\hline
			\hline
			$\RootD[4]$	&	$4$				&	$3.07035$	&	$10.00004$	&	$-		$	&	$10.00000$	&	$-		$	&	$10.00000$	&	$-		$	&
			$10.00000$	&	$-		$	&	$-		$	&	$-		$	&	$-		$	\\
			\hline
			&	$5$				&	$3.94031$	&	$10.00231$	&	$-		$	&	$10.00000$	&	$-		$	&	$10.00000$	&	$-		$	&
			$10.00000$	&	$-		$	&	$10.00000$	&	$-		$	&	$-		$	\\
			\hline
			&	$6$				&	$3.99496$	&	$10.02432$	&	$9.11312$	&	$10.01314$	&	$8.93873$	&	$10.00001$	&	$5.12215$	&
			$10.00000$	&	$-		$	&	$10.00000$	&	$-		$	&	$10.00000$	\\
			\hline
		\end{tabular}
	}
\end{center}
	\caption{The bound $\chi_m (\R^4,\,\partial \Vor(\Corootlattice(\RootB[4]))) = \chi_m (\R^4,\,\partial \Vor(\Corootlattice(\RootD[4]))) \geq 1-1/F(r,d)$ for the icositetrachoron.}
\label{B4D4IcositetrachoronTable2}
\end{table}

For $r=1$, the numerically computed bound seems to converge to $4$. For $r\geq 2$, the best possible bound we obtained is assumed at $r=2$ and $d=7$, respectively $d=6$. For $r$ odd, the value is always smaller than for $r$ even.

We observe that, for $\RootB[4]$, we have $F(2,7) \geq F(4,7)$ although $2$ divides $4$. This is because the monotonous growth in \Cref{thm_ChromaticChebyshevBound} only holds for $d\to\infty$. In the $\RootD[4]$ case, we have the same for $F(2,6) \geq F(4,6)$. We display the optimal coefficients for the corresponding measure below.

\begin{table}[H]
\begin{center}
	\begin{tabular}{|c||c|c||c|c|}
		\hline
		&	\multicolumn{2}{c||}{$\RootB[4]$}		&	\multicolumn{2}{c|}{$\RootD[4]$}			\\
		\hline
		\hline
		$r$		&	$1-1/F(r,7)$	&	$c_\alpha$			&	$1-1/F(r,6)$	&	$c_\alpha$			\\
		\hline
		$1$		&	$3.99961$		&	$c_{1000}=0.33303$	&	$3.99496$		&	$c_{1000}=0.33305$	\\
		&					&						&					&	$c_{0010}=0.33348$	\\
		&					&	$c_{0001}=0.66697$	&					&	$c_{0001}=0.33348$	\\
		\hline
		$2$		&	$10.02434$		&	$c_{0100}=0.40062$	&	$10.02432$		&	$c_{0100}=0.40188$	\\
		&					&	$c_{1001}=0.35491$	&					&	$c_{1001}=0.17692$	\\
		&					&						&					&	$c_{1010}=0.17692$	\\
		&					&	$c_{0010}=0.17769$	&					&	$c_{0011}=0.17726$	\\
		&					&	$c_{0002}=0.04444$	&					&	$c_{0002}=0.02228$	\\
		&					&						&					&	$c_{0020}=0.02228$	\\
		&					&	$c_{2000}=0.02234$	&					&	$c_{2000}=0.02245$	\\
		\hline
	\end{tabular}
\end{center}
\caption{The optimal coefficients for $F(r,7)$, respectively $F(r,6)$. The coefficients associated to $\weight=\alpha_1\,\fweight{1}+\ldots+\alpha_4\,\fweight{4}$ are denoted by $c_\alpha$.}
\label{icositetrachoronTable}
\end{table}

Recall from \Cref{equation_WeightsRootsB,equation_WeightsRootsD} that the fundamental weights satisfy $\fweight{i}(\RootB[4]) = \fweight{i}(\RootD[4])$ for $i=1,2,4$ and $\fweight{3}(\RootB[4]) = \fweight{3}(\RootD[4]) + \fweight{4}(\RootD[4])$. In the case of $r=2$ in \Cref{icositetrachoronTable}, we observe that
\begin{enumerate}
\item the centers of facets are weighted with $0.40062\approx 0.40188$,
\item the centers of faces are not weighted,
\item the centers of edges are weighted with $0.35491\approx 0.17692+0.17692$ and $0.17769\approx 0.17726$ and
\item the vertices are weighted with $0.02234\approx 0.02245$ and $0.04444\approx 0.02228+0.02228$.
\end{enumerate}
Further computations are limited by the size of the semi--definite program, see \Cref{SDPMatrixNumberSizeTable}. Note that the chromatic number of $\R^4$ for the icositetrachoron polytope is at least $15$ \cite[Theorem 5]{BBMP}, proven analytically via a discrete subgraph and its clique density.

\subsubsection{The cube in $\R^n$}

The cube $[-1/2,1/2]^n$ is the Vorono\"{i} cell of the coroot lattice for the root system $\RootC$, that is, for the cubic lattice $\Corootlattice(\RootC) = \Z^n$. In this case, the chromatic number is known to be $2^n$, see \cite{BBMP} for a counting argument that does not involve spectral bounds. We reprove this fact with the spectral bound by taking a $\weyl$--invariant measure, which is supported on the vertices and centers of edges, faces, etc. of $\Vor(\Corootlattice(\RootC))$.

\begin{proposition}\label{thm_cubeRnbound}
The spectral bound is sharp for $\chi_m (\R^n,\,\partial \Vor(\Corootlattice(\RootC))) = 2^n$.
\end{proposition}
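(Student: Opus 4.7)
The plan is to combine the known upper bound $\chi_m(\R^n,\partial\Vor(\Corootlattice(\RootC))) \leq 2^n$ from \cite{BBMP} with a matching spectral lower bound obtained from an explicit discrete $\weyl$-invariant measure, so that the inequality in \Cref{thm_ChromaticChebyshevBound} becomes an equality.

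For $\RootC$ we have $\Corootlattice=\Z^n$, $\highestroot=2\,e_1$, $\highestroot^\vee=\fweight{1}=e_1$, and the dominant weights in the first level are $S_1\cap\Weights^+=\{\fweight{1},\ldots,\fweight{n}\}$, where $\fweight{k}=e_1+\cdots+e_k$. The orbit $\weyl\fweight{k}$ consists of all signed $k$-subsets of the standard basis and has cardinality $\binom{n}{k}\,2^k$. A direct orbit-sum computation then gives
\[
\gencos{\fweight{k}}(u)
=\binom{n}{k}^{-1}\,e_k\bigl(\cos 2\pi u_1,\ldots,\cos 2\pi u_n\bigr),
\]
where $e_k$ denotes the elementary symmetric polynomial of degree $k$.

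I then choose the coefficients $c_{\fweight{k}}:=\binom{n}{k}/(2^n-1)$, which are non-negative and sum to $1$, so they are feasible for $F(1)$ in \Cref{thm_ChromaticChebyshevBound}. The factor $\binom{n}{k}$ is designed to cancel the normalisation above, and using the generating identity $\sum_{k=1}^n e_k(x)=\prod_j(1+x_j)-1$ we obtain
\[
\sum_{k=1}^n c_{\fweight{k}}\,T_{\fweight{k}}(\gencos{}(u))
=\frac{1}{2^n-1}\left(\prod_{j=1}^n\bigl(1+\cos 2\pi u_j\bigr)\,-\,1\right).
\]
Since each factor $1+\cos 2\pi u_j$ lies in $[0,2]$, the right-hand side takes its minimum $-1/(2^n-1)$ (for instance at $u=e_1/2$) and its maximum $1$ at $u=0$. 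Hence $F(1)\geq -1/(2^n-1)$, and \Cref{thm_ChromaticChebyshevBound} yields
\[
\chi_m\bigl(\R^n,\partial\Vor(\Corootlattice(\RootC))\bigr)\;\geq\; 1-\frac{1}{F(1)}\;\geq\; 2^n.
\]
Combined with the upper bound from \cite{BBMP}, both inequalities are equalities, which is exactly the claim.

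The only delicate step is spotting that the right proportionality is $c_{\fweight{k}}\propto\binom{n}{k}$: this is engineered so that the Fourier transform of the measure factorises as a product over coordinates, after which both extremal values are immediate. The rest of the argument is routine manipulation in the Chebyshev basis.
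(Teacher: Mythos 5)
Your proof is correct and follows essentially the same route as the paper: the same choice of coefficients $c_{\fweight{k}}=\binom{n}{k}/(2^n-1)$ on the level-one dominant weights, the same identification of $\binom{n}{k}\,\gencos{\fweight{k}}(u)$ with the $k$-th elementary symmetric polynomial in $\cos(2\pi u_1),\ldots,\cos(2\pi u_n)$, and the same factorization $\prod_j(1+\cos 2\pi u_j)-1\geq -1$ to get the lower bound $2^n$. The paper likewise pairs this with the known upper bound from \cite{BBMP}, so there is nothing to add.
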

\begin{proof}
The set of dominant weights $\weight\in\Weights^+$ of $\RootC$ with $\sprod{\weight,\highestroot^\vee}=1$ is $\{\fweight{1},\ldots,\fweight{n}\}$. 
We set $(2^n-1)\,c_i := \binom{n}{i}$. Then $c_1,\ldots,c_n \geq 0$ with $c_1 + \ldots + c_n = 1$ and the polynomial
\[
	\sum_{\sprod{\weight,\highestroot^\vee}=1} c_\weight \, T_\weight (z)
=	\sum_{i=1}^n c_i \, z_i .
\]
is an admissible choice for \Cref{eq_VoronoiLowerBound}. We show that it provides the optimal bound $2^n$. 
To do so, we rely on the formula for the fundamental weights from \Cref{equation_WeightsRootsC}, which gives us
\[
	(2^n-1)\,c_i\,\gencos{i}(u) = \sigma_i(\cos(2\pi u_1),\ldots,\cos(2\pi u_n)),
\]
where $\sigma_i$ is the $i$--th elementary symmetric function. When we substitute $z_i=\gencos{i}(u)$ for $u\in\R^n$, then
\[
		(2^n-1)\,\sum_{i=1}^n c_i \, z_i
=		\sum_{i=1}^n (2^n-1)\,c_i \, \gencos{i}(u)
=		\sum\limits_{i=1}^n \sigma_i(\mcos{u_1},\ldots,\mcos{u_n})
=		\prod\limits_{k=1}^n \underbrace{(1+\mcos{u_k})}_{\geq 0}-1
\geq	-1
\]
follows from Vieta's formula and equality holds for $u = 1/2 \, \fweight{j}$. Hence,
\[
		\chi_m (\R^n,\,\partial \Vor(\Corootlattice(\RootC)))
\geq	1-\frac{1}{\min\limits_{z\in\Image} \sum_{i=1}^n c_i \, z_i }
\geq	1-\frac{2^n-1}{- 1}
=		2^n.
\]
\end{proof}

\begin{remark}
For small $n$ $(2\leq n\leq 10)$, one can observe experimentally that the polynomial
\[
	p
:=	1 + \sum\limits_{i=1}^n \binom{n}{i} z_i \in \RX.
\]
is one of two linear factors in $\det(\posmat)$, $\posmat$ being the matrix from \emph{\Cref{thm_HermiteCharacterization}}, and $\Image$ is contained in the halfspace $\{z\in\R^n\,\vert\,p(z)\geq 0\}$. We conjecture that it is true in general. This would simplify the proof of \emph{\Cref{thm_cubeRnbound}}, giving it completely in terms of generalized Chebyshev polynomials and providing a new motivation for the choice of coefficients.
\end{remark}

\subsection{Discussion on the results}
\label{sec_chromaticdiscussion}

In addition to provide bounds on the chromatic number of the graphs that we consider, our method gives information on the discrete measures supported on lattice points up to scaling.

For example, in the case of the hexagon, even by increasing the number of support points, we did not get a discrete measure providing a better bound, see \Cref{A2HexagonTable}. Our experiments then suggest that the optimal measure supported on rational points is the one supported by two orbits: the vertices of the hexagon, with weight $1/3$, and the middle of the edges, with weight $2/3$. 

In the case of the cross--polytope from \Cref{sssec_CP}, we observe a different phenomenon: when increasing the number of possible support points, the optimal measure distribution does not appear to stabilize. It seems then reasonable to expect the bound to get better when increasing the number of points, even though it is hard to conjecture for an optimal discrete measure after our experiments, see \Cref{B3L1NormCoefficients}.
Moreover, we note that the larger the set of possible support points is, the higher we need to go in the order of the hierarchy to get a good bound. 
This can be explained by the fact that the weighted degrees of the involved Chebyshev polynomials get higher, making the semi--definite programs harder to solve. 

Even if we could prove that the spectral bound is sharp for several of our set avoiding graphs, sometimes the bounds that we obtain look far from the expected chromatic number of $\R^n$. 
This might happen for several reasons. 
First, when considering our discrete measures supported on lattices, we are always implicitly computing a bound for a discrete subgraph of $\R^n$, that might have a chromatic number smaller than $\R^n$. 
However, this is not the only reason: getting back to the hexagon, the measure supported on the vertices and the middles of edges gives a bound for a discrete graph.
However, it was proven in \cite{BBMP} that this graph has chromatic number $4$. 
In this case, it is likely that the spectral bound is exactly $25/7$, and does not give the chromatic number. 
Such a phenomenon was already observed in \cite{DMMV}, where, for the lattice $\mathrm{E}_7$, the optimal spectral bound was computed to be $10$, while the chromatic number of this lattice is $14$.

\pagestyle{fancy}
\lhead[E. Hubert, T. Metzlaff, P. Moustrou, C. Riener]{}
\rhead[]{}
\cfoot{}
\rfoot[\today]{\thepage}
\lfoot[\thepage]{}
\renewcommand{\headsep}{8mm}
\renewcommand{\footskip}{15mm}

\section*{Conclusion}

We give an algorithm to minimize a trigonometric polynomial with crystallographic symmetry. 
To do so, we rewrite the problem in terms of generalized Chebyshev polynomials and use established techniques from polynomial optimization with matrix inequalities. 
This results in a hierarchy of SDPs. 
A Maple package to conduct computations with generalized Chebyshev polynomials and to obtain the matrices for the SDP is available\footnote{\href{https://github.com/TobiasMetzlaff/GeneralizedChebyshev}{https://github.com/TobiasMetzlaff/GeneralizedChebyshev}}.

To strengthen our approach, one could consider further techniques from symmetry exploitation, such as symmetry adapted bases \cite{Gatermann04}, and combinations with the exploitation of sparsity \cite{magron23}.

For the chromatic number of avoidance graphs, we present a hierarchy of lower semi--definite bounds that originates from a bilevel polynomial optimization problem. For such problems, it would be interesting to compute the spectral bound for continuous measures supported on the boundary of our polytopes, to conclude whether such an approach could be at least as powerful as the combinatorial approach. Improving the implementation would allow at some point to handle the famous $\RootE[8]$ lattice.

\section*{Acknowledgments}

We want to thank Christine Bachoc (Universit\'{e} de Bordeaux), since the idea of computing the spectral bound for polytope--distance graphs through polynomial optimization was initiated by her, in discussion with Philippe Moustrou. 

The authors are also grateful to Michal Kocvara (University of Birmingham), Milan Korda, Victor Magron (CNRS LAAS Toulouse) and Bernard Mourrain (Inria d'Universit\'{e} C\^{o}te d'Azur) for fruitful suggestions and discussions.

The majority of the work of Tobias Metzlaff was carried out during his doctoral studies \cite{TobiasThesis} at Inria d'Universit\'{e} C\^{o}te d'Azur, supported by European Union’s Horizon 2020 research and innovation programme under the Marie Sk\l odowska-Curie Actions, grant agreement 813211 (POEMA).
Minor changes were applied during his postdoctoral research at RPTU Kaiserslautern--Landau, supported by the Deutsche Forschungsgemeinschaft transregional collaborative research centre (SFB--TRR) 195 ``Symbolic Tools in Mathematics and their Application''.

%\section*{Statements and Declarations}

%The authors declare that they have no conflict of interest.

%\section*{Software used in preparation of this article}

%This document was created with LaTeX.
%All images were created with Maple 2022 (\Cref{figA2,figB2,example_rootsystemG2,figC2,fig_OrbitSpaceIntro,A2level2,FigureDumitrescu,pic_C3B3L1Norm,B3L1NormCoefficients,C3B3L1NormTable,fig_UnitDist,HexagonTable,fig_A2G2Min,RhombicDodecahedron,RhombicDodecahedronTable,A3Min}) or LaTeX Tikz (\Cref{fig_ChiLambda,fig_chromaticnumberhexagon}). 
%All semi--definite programs were modeled in Python and solved with Mosek (\Cref{TableDumitrescuComparison,B3C3L1Table2,B4C4D4L1Table2,A2HexagonTable,A3RhombicTable,B4D4IcositetrachoronTable2,table_appendix}).

\clearpage

\bibliographystyle{plain}
{\bibliography{mybib.bib}}

\clearpage

\appendix
\section{Irreducible root systems of type $\RootA$, $\RootC$, $\RootB$, $\RootD$, $\RootG[2]$}
\label{Appendix_IrredRootSys}
\setcounter{equation}{0}

\begin{small}

For $1\leq i\leq n$, we denote by $e_i\in\R^n$ the Euclidean standard basis vectors.

\subsection*{$\RootA$}

The group $\mathfrak{S}_{n}$ acts on $\R^n$ by permutation of coordinates and leaves the subspace $V = \R^n/\langle [1,\ldots,1]^t\rangle = \{u\in\R^{n}\,\vert\,u_1+\ldots +u_{n}=0\}$ invariant. The root system $\RootA$ given in \cite[Planche I]{bourbaki456} is a root system of rank $n-1$ in $V$ with base and fundamental weights
\begin{equation}\label{equation_WeightsRootsA}\tag{A}
\rho_i=e_i-e_{i+1}
\tbox{and}
\fweight{i}=\sum_{j=1}^i e_j -\dfrac{i}{n} \sum_{j=1}^{n} e_j = \dfrac{1}{n} (\underbrace{n-i,\ldots, n-i}_{i\,\mbox{\small{ times}}},\underbrace{ -i, \ldots, -i}_{n-i\,\mbox{\small{ times}}})^t
\end{equation}
for $1\leq i \leq n-1$. The Weyl group of $\RootA$ is $\weyl\cong\mathfrak{S}_{n}$ and the reflection $s_{\rho_i}$ permutes the coordinates $i$ and $i+1$. Thus, $-\fweight{n-i}\in\weyl\,\fweight{i}$ and the orbit $\weyl\,\fweight{i}$ has cardinality $\binom{n}{i}$ for $1\leq i \leq n-1$.

\subsection*{$\RootC$}

The groups $\mathfrak{S}_{n}$ and $\{\pm 1\}^n$ act on $\R^n$ by permutation of coordinates and multiplication of coordinates by $\pm 1$. The root system $\RootC$ given in \cite[Planche III]{bourbaki456} is a root system in $\R^n$ with base and fundamental weights
\begin{equation}\label{equation_WeightsRootsC}\tag{C}
\rho_i=e_i-e_{i+1},\quad \rho_n=2\,e_n
\tbox{and}
\fweight{i}=e_1 + \ldots + e_i.
\end{equation}
for $1\leq i\leq n$. The Weyl group of $\RootC$ is $\weyl \cong \mathfrak{S}_{n}\ltimes\{\pm 1\}^n$. We have $-I_n\in\weyl$ and thus, $-\fweight{i}\in\weyl\,\fweight{i}$. Furthermore, the orbit $\weyl\,\fweight{i}$ has cardinality $2^i\,\binom{n}{i}$ for $1\leq i \leq n$.

\subsection*{$\RootB$}

The root system $\RootB$ given in \cite[Planche II]{bourbaki456} is a root system in $\R^n$. Its Weyl group is isomorphic to that of $\RootC$. The base and fundamental weights are
\begin{equation}\label{equation_WeightsRootsB}\tag{B}
\rho_i=e_i-e_{i+1},\quad \rho_n=e_n
\tbox{and}
\fweight{i}=e_1 + \ldots + e_i,\quad \fweight{n}=(e_1+\ldots+e_n)/2.
\end{equation}
for $1\leq i\leq n$. The Weyl group of $\RootB$ is $\cong \weyl \cong \mathfrak{S}_{n}\ltimes\{\pm 1\}^n$. We have $-I_n\in\weyl$ and thus, $-\fweight{i}\in\weyl\,\fweight{i}$. Furthermore, the orbit $\weyl\,\fweight{i}$ has cardinality $2^i\,\binom{n}{i}$ for $1\leq i \leq n$.

\subsection*{$\RootD$}

The groups $\mathfrak{S}_{n}$ and $\{\pm 1\}^n_+:=\{\epsilon\in\{\pm 1\}^n\,\vert\,\epsilon_1\ldots\epsilon_n=1\}$ act on $\R^n$ by permutation of coordinates and multiplication of coordinates by $\pm 1$, where only an even amount of sign changes is admissible. The root system $\RootD$ given in \cite[Planche IV]{bourbaki456} is a root system in $\R^n$ with base and fundamental weights
\begin{equation}\label{equation_WeightsRootsD}\tag{D}
\begin{array}{rl}
&	\rho_i=e_i-e_{i+1},\quad \rho_n=e_{n-1} + e_n
\tbox{and}\\
&	\fweight{i} = e_1 + \ldots + e_i , \quad \fweight{n-1}=(e_1+\ldots +e_{n-1}-e_n)/2,\quad \fweight{n}=(e_1+\ldots + e_n)/2.
\end{array}
\end{equation}
The Weyl group of $\RootD$ is $\weyl \cong \mathfrak{S}_{n}\ltimes\{\pm 1\}^n_+$. For all $1\leq i \leq n$, we have $-\fweight{i}\in\weyl\,\fweight{i}$, except when $n$ is odd, where $-\fweight{n-1}\in\weyl\,\fweight{n}$. Furthermore, the orbit $\weyl\,\fweight{i}$ has cardinality $2^i\,\binom{n}{i}$ for $1\leq i \leq n-2$ and $\nops{\weyl\,\fweight{n-1}}=\nops{\weyl\,\fweight{n}}=2^{n-1}$.

\subsection*{$\RootG[2]$}

The group $\mathfrak{S}_{3} \ltimes \{\pm 1\}$ acts on $\R^3$ by permutation of coordinates and scalar multiplication with $\pm 1$. The subspace $V = \R^3/\langle[1,1,1]^t\rangle =\{u\in\R^{n}\,\vert\,u_1+u_2 +u_{3}=0\}$ is left invariant. The root system $\RootG[2]$ given in \cite[Planche IX]{bourbaki456} is a root system of rank $2$ in $V$ with base and fundamental weights
\begin{equation}\label{equation_WeightsRootsG}\tag{G}
	\rho_{1}=[1,-1,0]^t,\quad
	\rho_{2}=[-2,1,1]^t\tbox{and}
	\fweight{1}=[1,-1,0]^t,\quad
	\fweight{2}=[-2,1,1]^t.
\end{equation}
The Weyl group of $\RootG[2]$ is $\weyl\cong\mathfrak{S}_{3}\ltimes \{\pm 1\}$. We have $-I_3\in\weyl$ and thus, $-\fweight{1}\in\weyl\,\fweight{1}$ as well as $-\fweight{2}\in\weyl\,\fweight{2}$. Furthermore, $\nops{\weyl\,\fweight{1}} = \nops{\weyl\,\fweight{2}} = 6$.

\end{small}

\clearpage

\section{Coefficients for discrete measures}
\label{Appendix_Coeff}

\begin{table}[H]
	\begin{center}
		\scalebox{0.5}{
			\begin{tabular}{|c||l|l||l|l||l|l|}
				\hline
						&	\multicolumn{2}{c||}{$\RootG[2]$ (\Cref{HexagonTable})}		&	\multicolumn{2}{c||}{$\RootB[3]$ (\Cref{RhombicDodecahedronTable})}		&	\multicolumn{2}{c|}{$\RootB[3]$ (\Cref{B3L1NormCoefficients})}		\\
				\hline
				\hline
				$r$		&	$1-1/F(r,8)$			&	$c_\alpha$						&	$1-1/F(r,9)$				&	$c_\alpha$								&	$1-1/F(r,9)$				&	$c_\alpha$							\\
				\hline
				$2$		&	$3.5714293935747494$	&	$c_{01}=0.6666662750776622$		&	$6.107671348334947$			&	$c_{010}=0.5927896822445022$			&	$6.0000017072602425$		&	$c_{010}=0.799999985332756$			\\
						&							&	$c_{20}=0.33333370766934456$	&								&	$c_{002}=0.06049713057719272$			&								&	$c_{200}=0.20000000682364782$		\\
						&							&									&								&	$c_{101}=0.24388381852316104$			&								&										\\
						&							&									&								&	$c_{200}=0.10282935835880404$			&								&										\\
				\hline
				$4$		&	$3.571429076541122$		&	$c_{02}=0.6666630238845522$		&	$6.107671578689443$			&	$c_{020}=0.5927767228148009$			&	$6.281482412640609$			&	$c_{102}=0.5937675654811545$		\\
						&							&	$c_{21}=5.533750816723066e-06$	&								&	$c_{012}=1.1060691764569475e-07$		&								&	$c_{020}=0.16188833861404459$		\\
						&							&	$c_{40}=0.33333143067593424$	&								&	$c_{111}=3.8973084159378557e-07$		&								&	$c_{210}=0.22680579314997618$		\\
						&							&									&								&	$c_{210}=2.072336714731282e-08$			&								&	$c_{400}=0.017538297991656945$		\\
						&							&									&								&	$c_{004}=0.060493918939264466$			&								&										\\
						&							&									&								&	$c_{103}=2.498258988290966e-06$			&								&										\\
						&							&									&								&	$c_{202}=0.24390270753567078$			&								&										\\
						&							&									&								&	$c_{301}=1.1237155333847226e-07$		&								&										\\
						&							&									&								&	$c_{400}=0.10282351530189676$			&								&										\\
				\hline
				$6$		&	$3.571428681101453$		&	$c_{03}=0.6666623416514681$		&	$6.107669002121958$			&	$c_{030}=0.5927778669897568$			&	$6.302692297425513$			&	$c_{004}=0.0949148422912926$		\\
						&							&	$c_{22}=4.988015651434592e-06$	&								&	$c_{022}=6.061390472114625e-07$			&								&	$c_{112}=0.5014281939941977$		\\
						&							&	$c_{41}=5.706892501421417e-07$	&								&	$c_{121}=1.8206124414166247e-06$		&								&	$c_{302}=7.315642871000283e-08$		\\
						&							&	$c_{60}=0.3333320956275223$		&								&	$c_{220}=4.46761370259674e-08$			&								&	$c_{030}=0.1561352016875235$		\\
						&							&									&								&	$c_{014}=3.593810809967429e-08$			&								&	$c_{220}=0.06437337530336916$		\\
						&							&									&								&	$c_{113}=5.812486718152765e-08$			&								&	$c_{410}=0.18314795448892407$		\\
						&							&									&								&	$c_{212}=6.666867988051883e-08$			&								&	$c_{600}=3.493798257127312e-07$		\\
						&							&									&								&	$c_{311}=2.3184776079239813e-08$		&								&										\\
						&							&									&								&	$c_{410}=1.4463186537305717e-08$		&								&										\\
						&							&									&								&	$c_{006}=0.060493220330598535$			&								&										\\
						&							&									&								&	$c_{105}=3.667230456631809e-07$			&								&										\\
						&							&									&								&	$c_{204}=2.767871177637715e-06$			&								&										\\
						&							&									&								&	$c_{303}=0.24389840991317713$			&								&										\\
						&							&									&								&	$c_{402}=2.2724832493027647e-07$		&								&										\\
						&							&									&								&	$c_{501}=3.6487920151002896e-08$		&								&										\\
						&							&									&								&	$c_{600}=0.10282443292790608$			&								&										\\
				\hline
				$8$		&	$3.571428656208869$		&	$c_{04}=0.6666503161482014$		&	$6.107665541792629$			&	$c_{040}=0.5927413721445046$			&	$6.305009836734212$			&	$c_{014}=0.13422046544583938$		\\
						&							&	$c_{23}=1.5147651853886996e-05$	&								&	$c_{032}=1.546207606818728e-05$			&								&	$c_{204}=0.19985959349100152$		\\
						&							&	$c_{42}=3.3861885617269103e-06$	&								&	$c_{131}=2.4044130958217336e-05$		&								&	$c_{122}=0.24975682959474593$		\\
						&							&	$c_{61}=2.3138911862176023e-06$	&								&	$c_{230}=6.319148130873309e-07$			&								&	$c_{312}=5.427029125502913e-07$		\\
						&							&	$c_{80}=0.3333288267491457$		&								&	$c_{024}=3.806799155209317e-07$			&								&	$c_{502}=4.5084485519007733e-07$	\\
						&							&									&								&	$c_{123}=9.087421097250022e-07$			&								&	$c_{040}=0.1749148298840411$		\\
						&							&									&								&	$c_{222}=7.360949551230705e-07$			&								&	$c_{230}=0.007446177711287559$		\\
						&							&									&								&	$c_{321}=2.4903674630203645e-07$		&								&	$c_{420}=0.11450575956253939$		\\
						&							&									&								&	$c_{420}=1.478631702703237e-07$			&								&	$c_{610}=0.11929412932078559$		\\
						&							&									&								&	$c_{016}=2.943300451878697e-07$			&								&	$c_{800}=1.2143477202148506e-06$	\\
						&							&									&								&	$c_{115}=2.9297946653624157e-07$		&								&										\\
						&							&									&								&	$c_{214}=4.382670764843666e-07$			&								&										\\
						&							&									&								&	$c_{313}=4.754482270338291e-07$			&								&										\\
						&							&									&								&	$c_{412}=2.15224185541249e-07$			&								&										\\
						&							&									&								&	$c_{511}=1.3489711070207265e-07$		&								&										\\
						&							&									&								&	$c_{610}=1.1966359088864953e-07$		&								&										\\
						&							&									&								&	$c_{008}=0.06047432425942938$			&								&										\\
						&							&									&								&	$c_{107}=5.940924769618481e-06$			&								&										\\
						&							&									&								&	$c_{206}=9.065959051197882e-06$			&								&										\\
						&							&									&								&	$c_{305}=3.120191201427434e-05$			&								&										\\
						&							&									&								&	$c_{404}=0.24385942897455937$			&								&										\\
						&							&									&								&	$c_{503}=5.1442857132297714e-06$		&								&										\\
						&							&									&								&	$c_{602}=5.107615032356929e-07$			&								&										\\
						&							&									&								&	$c_{701}=3.282825376188037e-07$			&								&										\\
						&							&									&								&	$c_{800}=0.10282813861868173$			&								&										\\
				\hline
				$10$	&	$3.5714286753163695$	&	$c_{05}=0.6666580152642103$		&	$6.107665208855795$			&	$c_{050}=0.5927564386327037$			&	$6.305020412263947$			&	$c_{106}=0.08316846319737575$		\\
						&							&	$c_{24}=6.815116335719704e-06$	&								&	$c_{042}=9.682802113876587e-06$			&								&	$c_{024}=0.045246108638833285$		\\
						&							&	$c_{43}=2.193358658091023e-06$	&								&	$c_{141}=1.670078365629944e-05$			&								&	$c_{214}=0.34658821329785183$		\\
						&							&	$c_{62}=7.690265068084644e-07$	&								&	$c_{240}=5.255662733075187e-07$			&								&	$c_{404}=3.348605887939886e-06$		\\
						&							&	$c_{81}=1.3120927502321667e-06$	&								&	$c_{034}=3.392005937306701e-07$			&								&	$c_{132}=0.10956846871243874$		\\
						&							&	$c_{100}=0.33333089013330625$	&								&	$c_{133}=7.020019227642831e-07$			&								&	$c_{322}=4.560289153963601e-06$		\\
						&							&									&								&	$c_{232}=5.878778783285085e-07$			&								&	$c_{512}=2.4403680757047186e-06$	\\
						&							&									&								&	$c_{331}=2.0282710528401052e-07$		&								&	$c_{702}=2.8049815946132317e-06$	\\
						&							&									&								&	$c_{430}=1.0633073501921022e-07$		&								&	$c_{050}=0.16787580057675633$		\\
						&							&									&								&	$c_{026}=1.5476534565169418e-07$		&								&	$c_{240}=0.003149346378010778$		\\
						&							&									&								&	$c_{125}=1.991295614776585e-07$			&								&	$c_{430}=0.06305418908391902$		\\
						&							&									&								&	$c_{224}=2.866037176316962e-07$			&								&	$c_{620}=0.11837765189988159$		\\
						&							&									&								&	$c_{323}=2.5625689173786394e-07$		&								&	$c_{810}=0.06295198879060124$		\\
						&							&									&								&	$c_{422}=1.308190531621658e-07$			&								&	$c_{1000}=6.609291152696872e-06$	\\
						&							&									&								&	$c_{521}=7.975379769098456e-08$			&								&										\\
						&							&									&								&	$c_{620}=6.171409199298282e-08$			&								&										\\
						&							&									&								&	$c_{018}=1.85758436253194e-07$			&								&										\\
						&							&									&								&	$c_{117}=1.701249900070453e-07$			&								&										\\
						&							&									&								&	$c_{216}=2.099121815359243e-07$			&								&										\\
						&							&									&								&	$c_{315}=2.487195823711977e-07$			&								&										\\
						&							&									&								&	$c_{414}=3.093875688657081e-07$			&								&										\\
						&							&									&								&	$c_{513}=1.3687347352043666e-07$		&								&										\\
						&							&									&								&	$c_{612}=8.735351860568483e-08$			&								&										\\
						&							&									&								&	$c_{711}=7.010541809893211e-08$			&								&										\\
						&							&									&								&	$c_{810}=7.739160553574774e-08$			&								&										\\
						&							&									&								&	$c_{0010}=0.06047862904298748$			&								&										\\
						&							&									&								&	$c_{109}=8.297185248986314e-06$			&								&										\\
						&							&									&								&	$c_{208}=2.1225449116373842e-06$		&								&										\\
						&							&									&								&	$c_{307}=5.4058462097554265e-06$		&								&										\\
						&							&									&								&	$c_{406}=1.4189605438770391e-05$		&								&										\\
						&							&									&								&	$c_{505}=0.24387355234988803$			&								&										\\
						&							&									&								&	$c_{604}=1.143617926217899e-06$			&								&										\\
						&							&									&								&	$c_{703}=3.817647143894467e-07$			&								&										\\
						&							&									&								&	$c_{802}=3.421556035470313e-07$			&								&										\\
						&							&									&								&	$c_{901}=4.1932268201038956e-07$		&								&										\\
						&							&									&								&	$c_{1000}=0.10282755980669979$			&								&										\\
				\hline
			\end{tabular}
		}
	\end{center}
\caption{The coefficient for the obtained bounds.}\label{table_appendix}
\end{table}

\end{document}